\documentclass[preprint,11pt]{elsarticle}

\usepackage{amssymb}
\usepackage{enumerate}
\usepackage{amsmath}
\usepackage{amsfonts}
\usepackage{mathrsfs}

\usepackage[dvipsnames,usenames]{xcolor}
\usepackage{hyperref}

\usepackage{lmodern}
\usepackage{diagbox}
\usepackage{multirow}
\usepackage{rotating} 
\setlength{\parindent}{2pt}

\oddsidemargin 0.2cm \headsep 0.1cm \textwidth=17cm \textheight=23cm

\newtheorem{theorem}{Theorem}[section]
\newtheorem{lemma}[theorem]{Lemma}
\newtheorem{e-proposition}[theorem]{Proposition}
\newtheorem{corollary}[theorem]{Corollary}
\newtheorem{e-definition}[theorem]{Definition}
\newtheorem{remark}{\it Remark\/}

\numberwithin{equation}{section}
\newenvironment{proof}{\par\noindent\textbf{Proof. }}{\hspace{\stretch{1}}$\square$\medskip\par}

\begin{document}

\begin{frontmatter}

 \author{Ch\'erif Amrouche $^a$}
 \ead{cherif.amrouche@univ-pau.fr}
 \address[authorlabel1]{Laboratoire de Math\'ematiques et Leurs Applications, UMR CNRS 5142\\ Universit\'e de Pau et des Pays de l'Adour,  64000 Pau,  France}
 \author{Mohand Moussaoui $^{b, \star}$}

 \address[authorlabel2]{Laboratoire des \'Equations aux D\'eriv\'ees Partielles Non Lin\'eaires et
Histoire des Math\'ematiques, Ecole Normale Sup\'erieure, Kouba, Algeria}

\title{On the traces of harmonic functions $H^{1/2}$ and  $H^{3/2}$ in Lipschitz domains}

\begin{abstract}
In this work, we revisit the following estimate due to Dahlberg \cite{Dahl}.  Let $\textit{\textbf x}_0$ a fixed point in a bounded Lipschitz domain $\Omega$. Then there exists a constant $C > 0$ such that if $u$ is a harmonic function in $\Omega$ and vanishes at $\textit{\textbf x}_0$, then 
\begin{equation*}
C^{-1} \Vert u \Vert_{L^2(\Gamma)} \leq \Big(\int_\Omega \varrho\vert \nabla u \vert^2\Big)^{1/2} \leq C \Vert u \Vert_{L^2(\Gamma)},
\end{equation*}
where $\varrho$ is the distance to the boundary of $\Omega$. Using Grisvard's work and interpolation theory for subspaces, we complete the solvability of the inhomogeneous Dirichlet problem:
$$
(\mathscr{L}_D^0)\ \ \ \  -\Delta u = f\quad \ \mbox{in}\ \Omega \quad
\mbox{and } \quad u = 0 \ \ \mbox{on }\Gamma,
$$ 
in a framework of fractional Sobolev spaces $H^s(\Omega)$, when $\Omega$ is a polygon or a polyhedron domain and $1/2 \leq s \leq 2$. Thanks to these regularity results  and an explicit function given by Ne$\mathrm{\check{c}}$as, we show that the above inequalities cannot be valid in their current form.  On the other hand, we  identify a functional space which satisfies the embeddings $H^{1/2}_{00}(\Omega)\hookrightarrow E(\nabla;\, \Omega) \hookrightarrow H^{1/2}(\Omega)$
and the trace operator $\gamma_0$  from $E(\nabla;\, \Omega)$ into $L^2(\Gamma)$ is well-defined and continuous. This  leads to an alternative to the functions $H^{1/2}(\Omega)$, non necessarily harmonic, for having a trace in $L^2(\Gamma)$ and also to a new
characterization of $H^{1/2}_{00}(\Omega)$ as the kernel of this operator. However, we show that if the domain $\Omega$  is of class $\mathscr{C}^{1, 1}$, then the above inequalities are valid.
\end{abstract}

\begin{keyword} 
Dirichlet problem, Laplacian,  fractional Sobolev spaces, weighted Sobolev spaces, traces, Lipschitz domains, optimal regularity, harmonic functions  

\MSC  35B45 \sep 35B65 \sep 35J05 \sep 35J25 \sep  35J47 
\end{keyword}

\end{frontmatter}

\section{Introduction and motivation}
In \cite{Necas}, Ne\v{c}as proved the following property (see Theorem 2.2 Section 6): if $\varrho^{\alpha/p} u\in L^p(\Omega)$ and $\varrho^{\alpha/p}\nabla u \in L^p(\Omega)$, for some $0 \leq \alpha < p - 1$, then $u_{\vert\Gamma} \in L^p(\Gamma)$ and 
\begin{equation*}\label{inegNec}
\int_\Gamma \vert u \vert^p \leq C(\Omega) \big(\int_\Omega \varrho^\alpha\vert  u \vert^p + \int_\Omega \varrho^\alpha\vert \nabla u \vert^p\big).
\end{equation*}
However, if $\alpha = p - 1$, he showed, using a counterexample with $\Omega= \,  ]0, 1/2[\,  \times\,  ]0, 1/2[ $, that the above inequality does not hold in general. In particular for $\alpha = 1$ and $p = 2$, if $ \sqrt \varrho \, u$ and $\sqrt \varrho \, \nabla u \in L^2(\Omega)$ (which implies that $u\in H^{1/2}(\Omega)$),  the function $u$ may have no trace in $L^2(\Gamma)$. \medskip

What about if in addition the function $u$ is harmonic? In \cite{Dahl}, see Corollary, Section 6,  the author proved the following property: let $\textit{\textbf x}_0$ a fixed point in $\Omega$, then there exists a constant $C > 0$ such that if $u$ is a harmonic function in $\Omega$ and vanishes at $\textit{\textbf x}_0$, then 

\begin{equation}\label{inegaltraceL2Gamma1}
C^{-1} \Vert u \Vert_{L^2(\Gamma)} \leq \Vert \sqrt  \varrho\,  \nabla u  \Vert_{L^{2}(\Omega)} \leq C \Vert u \Vert_{L^2(\Gamma)}.
\end{equation}

In \cite{Dahlberg}, the authors showed in the same context, but with different proof, a result close to the one mentioned above:  there exists $C > 0$ such that
\begin{equation}\label{ineg1}
\int_\Gamma \vert u \vert^2 d\sigma \leq C \int_\Gamma \vert S(u)\vert^2 d\sigma = C \int_\Omega \delta \vert\nabla u\vert^2 dx,
\end{equation}
without any specification of spaces of such function. Here, $\Omega$ is a bounded Lipschitz domain of $\mathbb{R}^N$, $N \geq 2$,  and $S(u)$ is the area integral of $u$ and $\delta$ is an adaptative distance to the boundary, equivalent  to the distance $\varrho$ to the boundary $\Gamma$. \medskip

The proof of \eqref{ineg1} is given on pages 1428 and 1429 in  \cite{Dahlberg} and contains unjustified formal calculations. \smallskip

Recall that if $u\in L^2(\Omega)$, then we have the following implication: 
$$
\sqrt \varrho\,  \nabla u \in L^2(\Omega) \Longrightarrow u \in H^{1/2}(\Omega) \quad \mathrm{and}\quad \sqrt \varrho\,  \nabla^2 u \in L^2(\Omega) \Longrightarrow u \in H^{3/2}(\Omega)
$$
and the reverse implications hold if moreover $u$ is a harmonic function, see Theorem 3.2 and Theorem 3.8 in \cite{AM} or Theorem 4.2 in \cite {J-K}. Here $\nabla^2 u$ denotes the Hessian matrix of $u$. In addition,  we have  the following equivalence norms for harmonic functions:
\begin{equation*}
\begin{array}{rl}
\Vert u \Vert_{H^{1/2}(\Omega)} \approx & \Vert u \Vert_{L^2(\Omega)} + \Vert \sqrt  \varrho\,  \nabla u  \Vert_{L^{2}(\Omega)} \quad \mathrm{for}\; u \in H^{1/2}(\Omega)\cap \mathscr{H}, \\
\Vert u \Vert_{H^{3/2}(\Omega)} \approx & \Vert u \Vert_{L^2(\Omega)} + \Vert \sqrt  \varrho\,  \nabla^2 u  \Vert_{L^{2}(\Omega)} \quad \mathrm{for}\; u \in H^{3/2}(\Omega)\cap \mathscr{H}, 
\end{array}
\end{equation*}
where $ \mathscr{H} $ is the space of harmonic functions in $\Omega$.
Inequalities \eqref{inegaltraceL2Gamma1} would then result in the equivalence of the following norms:
\begin{equation*}
\Vert u \Vert_{H^{1/2}(\Omega)} \approx  \Vert u \Vert_{L^2(\Gamma)} \quad \mathrm{for}\; u \in H^{1/2}(\Omega)\cap \mathscr{H} 
\end{equation*}
and would imply that any harmonic function $H^{1/2}(\Omega)$ has a trace  in $L^2(\Gamma)$.  Consequently, the gradient of any harmonic function $u$ in $H^{3/2}(\Omega)$ would have a trace in $\textit{\textbf L}^2(\Gamma)$, {\it i.e} $u\in H^1(\Gamma)$ and $\partial_\textit{\textbf n} u \in L^2(\Gamma)$.\medskip

In the same spirit, inequalities \eqref{inegaltraceL2Gamma1} would imply the following property: let $u$ be a harmonic function in $\Omega$ satisfying $u(\textit{\textbf x}_0) = 0$ and $\nabla u(\textit{\textbf x}_0) = {\bf 0}$ at some point $\textit{\textbf x}_0\in \Omega$, then
\begin{equation*}
\Vert u \Vert_{H^1(\Gamma)} \leq C(\Omega)\Big(\int_\Omega \varrho\vert \nabla^2 u \vert^2\Big)^{1/2}.
\end{equation*}
And as above,  we would have the equivalence of the following norms:
\begin{equation*}
\Vert u \Vert_{H^{3/2}(\Omega)} \approx  \Vert u \Vert_{H^1(\Gamma)} \quad \mathrm{for}\; u \in H^{3/2}(\Omega)\cap \mathscr{H}.
\end{equation*}

The purpose of this work is to show that the inequalities \eqref{inegaltraceL2Gamma1} and \eqref{ineg1} cannot be valid in their current form and to propose an alternative for the functions $H^{1/2}(\Omega)$, resp. $H^{3/2}(\Omega)$, to have a trace in $L^2(\Gamma)$, resp. in $H^1(\Gamma)$.\medskip
 
The paper is organized as follows. In Section \ref{rap}, we provide some reminders about fractional Sobolev spaces and their properties. In Section \ref{poly}, we revisit and complete the solvability of the inhomogeneous Dirichlet problem:
$$
(\mathscr{L}_D^0)\ \ \ \  -\Delta u = f\quad \ \mbox{in}\ \Omega \quad
\mbox{and } \quad u = 0 \ \ \mbox{on }\Gamma,
$$ 
in a framework of fractional Sobolev spaces when $\Omega$ is a polygon or a polyhedron.  Recall that when  
 $\Omega$ is a general Lipschitz domain, for any $1/2 < s < 3/2$, the operator 
 $$
 \Delta : H^{2- s}_0(\Omega) \longrightarrow H^{-s}(\Omega)
 $$ 
 is an isomorphism. What happens when $0 \leq s \leq 1/2$? Using Grisvard's work and interpolation theory for subspaces, we provide answers to this question  in the case where $\Omega$ is a polygonal domain of $\mathbb{R}^2$  or a polyhedral domain of $\mathbb{R}^3$,  particularly when the domain is non convex. Thanks to the regularity results given in Section \ref{poly} and an explicit function given by Ne$\mathrm{\check{c}}$as, we shall see in Section \ref{counterexample} a counterexample concerning the above inequalities.  It is important to note that the estimates \eqref{inegaltraceL2Gamma1} and \eqref{ineg1} have been widely used, in particular to argue that $H^{3/2}$-regularity for the $H^{1}_0$ solution of the Laplace equation is unattainable for $f$ in the dual of $H^{1/2}_{00}(\Omega)$.  In a forthcoming paper, we show that this regularity is indeed achieved for such RHS, when the bounded domain is only Lipschitz. Section \ref{traces} is devoted to traces in the limit cases $H^{1/2}(\Omega)$ and $H^{3/2}(\Omega)$. We  identify a functional space 
\begin{equation*}
 E(\nabla;\, \Omega)\ =\ \left\{\,v\in H^{1/2}(\Omega);\ \nabla v\in  [\textit{\textbf H}^{\, 1/2}(\Omega)]'\,\right\},
\end{equation*}
which satisfies the embeddings $H^{1/2}_{00}(\Omega)\hookrightarrow E(\nabla;\, \Omega) \hookrightarrow H^{1/2}(\Omega)$ and the trace operator $\gamma_0$  from $E(\nabla;\, \Omega)$ into $L^2(\Gamma)$ is well-defined and continuous, leading to a new
characterization of $H^{1/2}_{00}(\Omega)$ as the kernel of this operator.

\section{Functional framework}\label{rap}

\subsection{Spaces $H^s(\Omega)$}

Let us recall the definitions of some Sobolev spaces and some important properties that will be useful later. Recall first the following Sobolev space: for $s\in \mathbb{R}$,
$$
H^s(\mathbb{R}^N)\ =\ \left\{\, v\in \mathscr{S}'(\mathbb{R}^N);\ (1+|\xi|^2)^{s/2}\widehat{v}\in L^2(\mathbb{R}^N) \right\}
$$
which is a Hilbert space for the norm: $
\Vert u\Vert_{H^s(\mathbb{R}^N)} = \left(\int_{\mathbb{R}^N} (1+|\xi|^2)^{s}\vert\widehat{v}\vert^2 dx\right)^{1/2}. $ Here the notation $\widehat{v}$ denotes the Fourier transform of $v$. For each non negative real $s$, define
$$
H^s(\Omega)\ =\ \left\{\, v|_{\Omega};\ \ v\in H^s(\mathbb{R}^N) \right\},
$$
with the usual quotient norm $
\Vert u\Vert_{H^s(\Omega)} = \inf\{\Vert v\Vert_{H^s(\mathbb{R}^N)} ;\, v|_{\Omega} = u\; \mathrm{in}\; \Omega\}.$ If $m\in\mathbb{N}$, then
$$
H^m(\Omega)\ =\ \left\{\, v\in L^2(\Omega);\ D^\lambda v \in L^2(\Omega)\ \ \textrm{ for } 0<|\lambda|\leq m\right\},
$$
and we have the equivalence:
$$
\Vert u\Vert_{H^m(\Omega)} \approx \left(\sum_{0\leq \vert\lambda\vert \leq m}\Vert  D^\lambda u\Vert^2_{L^2(\Omega)}\right)^{1/2}.
$$
Using the interpolation by the complex method, recall that
$$
H^s(\Omega)\ =\ \left[H^m(\Omega),L^2(\Omega) \right]_\theta \quad \mathrm{with}\quad  s=(1-\theta)m\ \textrm{ and }0<\theta<1.
$$
According to \cite{Ada} and \cite{Gri}, when $s = m + \sigma$, with $0 < \sigma < 1$, $H^s(\Omega)$ can be equipped with an equivalent and intrinsic norm $
\Vert u\Vert_{H^s(\Omega)} = (\Vert u\Vert_{H^m(\Omega)}^2 + \vert u\vert^2_{H^s(\Omega)})^{1/2},$ where
$$
\vert u\vert_{H^s(\Omega)} =  \big(\sum_{\vert\lambda\vert = m}\int_\Omega\int_\Omega\frac{\vert D^\lambda u(x) - D^\lambda u(y)\vert^2}{\vert x - y \vert^{N + 2\sigma}}dx\,dy\big)^{1/2}.
$$

\subsection{Spaces $H^s_0(\Omega)$ and  $H^s_{00}(\Omega)$}

This leads us to introduce the following space
$$
H^s_0(\Omega)\ =\ \overline{\mathscr{D}(\Omega)}^{\, ||\,.\,||_{H^s(\Omega)}}\quad  \textrm{ with } s\geq 0, 
$$
\textit{i.e.}, the closure of the space $\mathscr{D}(\Omega)$ for the norm $||\,.\, ||_{H^s(\Omega)}$. Let us also recall that for any $ 0\leq s\leq 1/2$, the space $\mathscr{D}(\Omega) \textrm{ is dense in } H^s(\Omega)$. That means that $H^s(\Omega) = H^s_0(\Omega)$   for $ 0\leq s\leq 1/2.$  Moreover, we have the following properties:\smallskip

\textit{\textbf {i}}) Let $u\in H^s_0(\Omega)$ with $0 \leq s \le 1$. Then 
\begin{equation*}\label{a2-e5}
\frac{u}{\varrho^s}\in L^2(\Omega)\quad  \textrm{ when } \quad s\neq 1/2.
\end{equation*}
Moreover, we have the following Hardy inequality: 
\begin{equation*}\label{HardyIneq}
\Vert \frac{u}{\varrho^s} \Vert_{L^2(\Omega)} \leq C(\Omega) \vert u \vert_{H^s(\Omega)}.
\end{equation*}
\textit{\textbf {ii}}) More generally, let $u\in H^s_0(\Omega)$ with $s>0$ and such that $ s - 1/2$ is not an integer. Then
\begin{equation}\label{a2-e6}
\forall  |\lambda|\leq s, \quad \frac{D^\lambda u}{\varrho^{s-|\lambda|}}\in L^2(\Omega),
\end{equation}
with similar inequalities as above.\medskip

Let us to introduce the following space: for $ s \geq 0$
$$
 \widetilde{H}^s(\Omega)\ =\ \left\{\, v\in H^s(\Omega);\, \widetilde{v}\in H^s(\mathbb{R}^N)  \right\},
 $$
 where $ \widetilde{v}$ is the extension of $v$ by zero outside $\Omega$. The space $ \widetilde{H}^s(\Omega)$ is a Hilbert for the norm 
 $$
 \Vert u \Vert_{\widetilde{H}^s(\Omega)} =  \Vert \widetilde{u} \Vert_{H^s(\mathbb{R}^N)}
 $$
 and satisfies the following property:
 \begin{equation}\label{a2-e7}
\widetilde{H}^s(\Omega)=H^s_0(\Omega)\quad \mathrm{when}\quad s\notin\left\{1/2\right\}+\mathbb{N}.
\end{equation}

Another way to characterize the space $H^s_0(\Omega)$, for $s>1/2$ and $s\notin\left\{1/2\right\}+\mathbb{N}$, is given by
\begin{equation*}\label{a2-e8}
u\in H^s_0(\Omega)\ \Longleftrightarrow\ u\in H^s(\Omega)\ \textrm{ and }\ \frac{\partial^j u}{\partial \textit{\textbf {n}}\, ^j}=0,\ \  0\leq j\leq s-1/2,
\end{equation*}
where $\textit{\textbf {n}}$ is the outward normal vector to the boundary of $\Omega$.
For the case  $s = 3/2$, we have $H^{3/2}_0(\Omega) = H^{3/2}(\Omega)\cap H^{1}_0(\Omega)$. The interpolation between two spaces $H^s_0(\Omega)$ is somewhat different from the one between two spaces $H^s(\Omega)$. Indeed, if $s_1>s_2\geq 0$ such that $ s_1, s_2\notin\left\{1/2\right\}+\mathbb{N} $, then we have
\begin{equation*}\label{a3-e1}
[H^{s_1}_0(\Omega), H^{s_2}_0(\Omega)]_\theta=H^{(1-\theta)s_1+\theta s_2}_0(\Omega)\ \ \ \  \textrm{ if } \ (1-\theta)s_1+\theta s_2 \notin\left\{1/2\right\}+\mathbb{N} 
\end{equation*}
and 
\begin{equation*}\label{a3-e2}
[H^{s_1}_0(\Omega), H^{s_2}_0(\Omega)]_\theta=H^{(1-\theta)s_1+\theta s_2}_{00}(\Omega)\ \ \ \  \textrm{ if } \ (1-\theta)s_1+\theta s_2 \in\left\{1/2\right\}+\mathbb{N} 
\end{equation*}
where the space $H^s_{00}(\Omega)$ is defined as follows: For any $\mu\in\mathbb{N}$, 
\begin{equation*}\label{a3-e3}
H^{\mu+1/2}_{00}(\Omega)\ =\ \left\{\, u\in H^{\mu+ 1/2}_{0}(\Omega);\ \ \frac{D^\lambda u}{\varrho^{1/2}}\in L^2(\Omega), \ \ \forall |\lambda|=\mu \,\right\}.
\end{equation*}
This is a strict subspace of $H^{\mu+1/2}_{0}(\Omega)$ with a strictly finer topology and $\mathscr{D}(\Omega) $ is dense in  $H^{\mu+1/2}_{00}(\Omega)$  for this finer topology. \medskip
 
The property (\ref{a2-e6}) admits a reciprocal one if $ s \notin\left\{1/2\right\}+\mathbb{N}$:
 \begin{equation*}\label{a3-e4}
 u\in H^s_0(\Omega)\ \Longleftrightarrow\ u\in  L^2(\Omega)\ \textrm{ and } \frac{D^\lambda u}{\varrho^{s-|\lambda|}}\in L^2(\Omega),\ \  \forall |\lambda|\leq s. 
 \end{equation*}
Regarding the property (\ref{a2-e7}), we have 
\begin{equation*}\label{a3-e5}
\widetilde{H}^s(\Omega)=H^s_{00}(\Omega)\quad \mathrm{when}\quad s\in\left\{1/2\right\}+\mathbb{N}.
\end{equation*}
We now have a look at their dual spaces. For $s\geq 0$ and $s \notin\left\{1/2\right\}+\mathbb{N}$, we denote the dual space of $H^s_0(\Omega)$ by $H^{-s}(\Omega)$. Note that since  $\mathscr{D}(\Omega)$ is dense in $\widetilde{H}^s(\Omega)$, then the dual space of $\widetilde{H}^s(\Omega)$ could be identified to a subspace of  $\mathscr{D}'(\Omega)$.

\section{$H^s$-regularity for Laplace equation}\label{poly}

\subsection{Characterization of harmonic functions kernels}\label{ssCharkernels} 

The characterization of the kernels of harmonic functions belonging to Sobolev spaces $H^s(\Omega)$ and satisfying a homogeneous Dirichlet condition is simple in the case where the open set $\Omega$ is sufficiently regular, even in the case where $s$ is a negative real number.  Let us define the following kernel for any $ - 1/2 < s <  \infty$: 
$$
\mathscr{N}_s(\Omega) = \{\varphi \in H^{-s} (\Omega); \; \Delta \varphi = 0 \; \mathrm{in}\; \Omega \; \mathrm{and}\; \varphi = 0\; \mathrm{on}\;  \Gamma \}.
$$
In the case where $\Omega$ is a bounded domain, convex or of class $\mathscr{C}^{1, 1}$ in $\mathbb{R}^N$ with $N \geq 2$,  this kernel is trivial for all $s < 1/2$. The situation is different if the bounded domain $\Omega$ is only Lipschitz. \medskip

As an example, let us consider the following Lipschitz and non convex domain: 
\begin{equation*}
\mathrm{for}  \; 1/2 < \alpha < 1, \quad \Omega = \{(r, \theta);\; 0 < r < 1,\quad 0 < \theta < \frac {\pi} {\alpha}\}.
\end{equation*}
We can easily verify that the following function
\begin{equation*}
z(r, \theta) = (r^{-\alpha} - r^{\alpha})\mathrm{sin}(\alpha\theta)
\end{equation*}
is harmonic in $\Omega$ with $z = 0$ on $\Gamma$ and $z\in H^{t}(\Omega)$ for any $t < 1 -\alpha $. Note that $0 <  1 -\alpha < 1/2$.\medskip

In \cite{AM} Theorem 8.3, we proved the following uniqueness result: 

\begin{theorem} [{\bf Uniqueness criterion in} $ H^{1/2}(\Omega)$] \label{unicityH1demi} Let $\Omega$ be a bounded Lipschitz domain of $\mathbb{R}^N$ with $N \geq 2$. 
  If $u\in H^{1/2}(\Omega)$ is harmonic and $u=0$ on $\Gamma$, then $u = 0$ in $\Omega$. 
  \end{theorem}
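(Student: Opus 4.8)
The plan is to reduce the $H^{1/2}$ statement to the already-known $H^1$ uniqueness result by a duality/regularity argument, exploiting the fact that $-\Delta$ maps $H^1_0(\Omega)$ isomorphically onto its dual. Suppose $u\in H^{1/2}(\Omega)$ is harmonic with $u=0$ on $\Gamma$ in the trace sense; since $1/2\le 1/2$, recall from Section \ref{rap} that $\mathscr{D}(\Omega)$ is dense in $H^{1/2}(\Omega)$, so $H^{1/2}(\Omega)=H^{1/2}_0(\Omega)$, and therefore the condition $u=0$ on $\Gamma$ is automatic and $u\in H^{1/2}_0(\Omega)$. The idea is to test the equation $-\Delta u=0$ against solutions of an auxiliary Dirichlet problem: for $g\in \mathscr{D}(\Omega)$ (or $g\in L^2(\Omega)$), let $w\in H^1_0(\Omega)$ solve $-\Delta w=g$ in $\Omega$, $w=0$ on $\Gamma$. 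By elliptic regularity on a Lipschitz domain one has at least $w\in H^{3/2}(\Omega)\cap H^1_0(\Omega)=H^{3/2}_0(\Omega)$; in fact $w\in H^{3/2}_{00}(\Omega)$ by the interpolation description of $H^s_0$ recalled above, so $\partial w/\partial\textit{\textbf n}\in L^2(\Gamma)$ and $w/\varrho^{1/2}\in L^2(\Omega)$ together with $\nabla w/\varrho^{-1/2}$-type bounds from property \eqref{a2-e6}.

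The key step is a Green-type identity adapted to the low regularity of $u$. Formally,
\begin{equation*}
0=\int_\Omega(-\Delta u)\,w\,dx=\int_\Omega u\,(-\Delta w)\,dx - \Big(\langle \partial_\textit{\textbf n} u,w\rangle_\Gamma-\langle u,\partial_\textit{\textbf n} w\rangle_\Gamma\Big)=\int_\Omega u\,g\,dx,
\end{equation*}
the boundary terms vanishing because $w=0$ on $\Gamma$ and because $u\in H^{1/2}_0(\Omega)$ forces the pairing $\langle u,\partial_\textit{\textbf n} w\rangle_\Gamma$ to be zero for $\partial_\textit{\textbf n} w\in L^2(\Gamma)$. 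To make this rigorous I would approximate: take $u_k\in\mathscr{D}(\Omega)$ with $u_k\to u$ in $H^{1/2}(\Omega)$, write $\int_\Omega u_k(-\Delta w)=\int_\Omega \nabla u_k\cdot\nabla w=\langle -\Delta u_k,w\rangle$, and then pass to the limit. The delicate pairing is $\int_\Omega\nabla u_k\cdot\nabla w$: since $u_k\to u$ only in $H^{1/2}$, one cannot pass $\nabla u_k\to\nabla u$ in $L^2$. Instead one integrates by parts the other way, $\int_\Omega \nabla u_k\cdot\nabla w = -\int_\Omega u_k\,\Delta w + \int_\Gamma u_k\,\partial_\textit{\textbf n} w = -\int_\Omega u_k\,\Delta w$ (as $u_k$ has compact support), so $\int_\Omega u_k\,g = \int_\Omega u_k(-\Delta w)$ passes to the limit using only $u_k\to u$ in $L^2$ and $\Delta w\in L^2(\Omega)$, giving $\int_\Omega u\,g\,dx=0$. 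Since $g\in\mathscr{D}(\Omega)$ is arbitrary and $\mathscr{D}(\Omega)$ is dense in $L^2(\Omega)$, we conclude $u=0$ in $\Omega$.

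I expect the main obstacle to be the justification of the boundary/limiting argument at the regularity threshold $s=1/2$, where $H^{1/2}_{00}$-phenomena and the non-density anomalies of Lipschitz domains are most subtle: one must be sure that the only solvability input needed is $-\Delta:H^1_0(\Omega)\to H^{-1}(\Omega)$ (hence $-\Delta:H^2(\Omega)\cap H^1_0(\Omega)\to L^2(\Omega)$, or at least surjectivity onto $L^2$) rather than any higher regularity that might fail on a general Lipschitz domain. A clean way to sidestep even the $H^{3/2}$-regularity of $w$ is to note that for $g\in L^2(\Omega)$ the solution $w\in H^1_0(\Omega)$ already satisfies $\Delta w=-g\in L^2(\Omega)$ and $w=0$ on $\Gamma$ in the $H^1_0$ sense, which is all the identity above requires; the trace of $u_k$ vanishes trivially. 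Thus the argument rests only on the well-posedness of the Dirichlet problem in $H^1_0$, plus density of $\mathscr{D}(\Omega)$ in both $H^{1/2}(\Omega)$ and $L^2(\Omega)$, both recalled in Section \ref{rap}.
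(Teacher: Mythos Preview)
The paper does not prove this theorem here; it simply cites \cite{AM}, Theorem~8.3. That said, your proposal has two genuine gaps. First, you misread the hypothesis: you claim that since $H^{1/2}(\Omega)=H^{1/2}_0(\Omega)$ the condition ``$u=0$ on $\Gamma$'' is automatic, but then the theorem would assert that \emph{every} harmonic function in $H^{1/2}(\Omega)$ vanishes --- already false for $u\equiv 1$. The boundary condition is substantive (for instance $u\in H^{1/2}_{00}(\Omega)$, equivalently $\widetilde u\in H^{1/2}(\mathbb{R}^N)$, or the vanishing of the generalized trace in the sense of Remark~\ref{remtrace}~i)), and without using that extra information on $u$ no uniqueness argument can succeed.

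Second, your limiting step is circular and never actually uses harmonicity. In the chain $\int_\Omega u_k\, g=\int_\Omega u_k(-\Delta w)=\int_\Omega\nabla u_k\cdot\nabla w=\langle -\Delta u_k,w\rangle$ you cannot pass to the limit in the last term, so you ``integrate by parts the other way'' --- but that lands you back on $-\int_\Omega u_k\,\Delta w$, the first term, and letting $k\to\infty$ produces only the tautology $\int_\Omega u\, g=\int_\Omega u\, g$, not $0$. A route that does work (once the hypothesis is read as $u\in H^{1/2}_{00}(\Omega)$): for $g\in L^2(\Omega)\subset[H^{1/2}_{00}(\Omega)]'$ solve $-\Delta w=g$ with $w\in H^{3/2}_0(\Omega)$ (the endpoint regularity of \cite{J-K} on Lipschitz domains), approximate $w$ --- not $u$ --- by $w_j\in\mathscr{D}(\Omega)$ in the $H^{3/2}$-norm (legitimate since $H^{3/2}_0=\overline{\mathscr{D}(\Omega)}^{H^{3/2}}$ by definition), use harmonicity of $u$ to get $\int_\Omega u\,\Delta w_j=0$, and pass to the limit via $\Delta w_j\to\Delta w=-g$ in $[H^{1/2}_{00}(\Omega)]'$ paired against $u\in H^{1/2}_{00}(\Omega)$. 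Your side-claim ``in fact $w\in H^{3/2}_{00}(\Omega)$'' is also unfounded: $H^{3/2}_0\neq H^{3/2}_{00}$, and only the former follows from the Lipschitz theory.
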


Let us now consider the case where $\Omega$ is a bounded Lipschitz polygonal domain, we will simply write polygon, whose angles are denoted by $\omega_1, \ldots, \omega_J$. Even if we have to reorder this family, we can assume that it is increasing: $\omega_1 \leq \ldots \leq \omega_J$. We set $\alpha_j = \frac{\pi}{\omega_j} \in\,  ]1/2, \infty[$.\medskip

Due to Theorem \ref{unicityH1demi}, we observe that $\mathscr{N}_s(\Omega) = \{0\}$ when $s = - 1/2$. \medskip

Recall that when $ s\in \mathbb{N}$, we have (see \cite{Gri1})
\begin{equation}\label{eqdim}
\mathrm{dim}\, \mathscr{N}_s(\Omega)  = \sum_{j = 1}^{j = J}\nu_s(\omega_j), \quad \mathrm{where}\quad \nu_s(\omega_j) = \mathrm{the\, largest \ integer} < \frac{1 + s}{\alpha_j}, \end{equation}
provided that for $s \geq 1$,
\begin{equation*}\label{conddim}
\forall j = 1\ldots, J, \quad \omega_j \notin \{\frac{1}{s+1}\pi, \ldots, \frac{s}{s+1}\pi\}. 
\end{equation*}

The case where the parameter $s$ is real is also interesting. Here, we will limit ourselves to cases where $-1/2 \leq s \leq 0$. We can easily verify that the equality \eqref{eqdim} holds also for such $s$. A brief explanation of this result can be given. The singularities involved in the structure of the kernel $\mathscr{N}_s(\Omega)$ are, as in the above example, of the type $(r^{-\alpha_j} - r^{\alpha_j})\mathrm{sin}(\alpha_j\theta)$, where here $1/2 < \alpha_j = \frac{\pi}{\omega_j}< 1$. However the function $\vert x \vert^{-\alpha_j}$ belongs to $H^t(B)$ for any $ t$ strictly less than $1 - \alpha_j $, where $B$ is the unit disk centered at origin.  Note the function  $\vert x \vert^{-\alpha_j}$ belongs also to the Besov space $[H^1(B), L^2(B)]_{\alpha_j, \infty}$. {Setting $t = - s$,  we get   $\frac{1 - s}{\alpha_j} \in \,  ]1, 2[$ and then  $\nu_{-s}(\omega_j) = 1 $. We conclude that
\begin{equation}\label{dimang}
\forall -1/2 \leq s \leq 0, \quad dim\,  \mathscr{N}_{s}(\Omega) =  Card\{j \in \{1, \ldots, J\}; \;  \omega_j > \pi\}.
\end{equation}

Since the family $(\alpha_j)_{1\leq j \leq J}$ is decreasing, then $
 \mathscr{N}_s(\Omega) = \{0\}$ iff $ s \leq \alpha_1 - 1$.  Note that the relation \eqref{eqdim} remains true if the parameter $s \geq -1/2$ is real or if $\Omega$ is a curvilinear polygonal open set. \medskip

\begin{remark}\label{remNoyau}\upshape  i) Note that $\mathscr{N}_0(\Omega) = \{0\}$ if and only if the polygon $\Omega$ is convex.\smallskip

ii) We have  $\mathscr{N}_{-1/2}(\Omega) = \{0\} $ for any polygon (and also for any bounded Lipschitz domain of $\mathbb{R}^N,$ with $N \geq 2$, see Theorem \ref{unicityH1demi}  above).\smallskip

iii) For any polygon $\Omega$, there exists $s_0(\Omega) \in \, ]0, 1/2[$ such that
\begin{equation*}
\mathscr{N}_{-s_0}(\Omega) = \{0\}\quad \mathrm{and}\quad \mathrm{for \, any}\; s < s_0(\Omega), \;   \mathscr{N}_{-s}(\Omega) \neq \{0\},
\end{equation*}
where  $s_0(\Omega) = 1 - \alpha_J$.\smallskip

iv) We conjecture that for any bounded Lipschitz domain $\Omega$, there exists $s_0(\Omega) \in \, ]0, 1/2[$ such that  
\begin{equation*}
\mathrm{for \, any}\; s > s_0(\Omega),\; \;   \mathscr{N}_{-s_0}(\Omega) = \{0\} \quad \quad \mathrm{and}\quad \quad \mathrm{for \, any}\; s < s_0(\Omega), \;   \mathscr{N}_{-s}(\Omega) \neq \{0\}.
\end{equation*}

 \medskip
 Let now $\Omega$ be a bounded Lipschitz polyhedral domain of $\mathbb{R}^3$,  we will simply write polyhedron. The situation is little bit different. We denote by $\Gamma_k$, $k = 1, \ldots,  K$ the faces of $\Omega$ and by $E_{jk}$ the edge between $\Gamma_j$ and $\Gamma_k$ when $\overline{\Gamma}_j$ and $\overline{\Gamma}_k$ intersect. The measure of the interior angle of the edge $E_{jk}$ is denoted by $\omega_{jk}$ and as in 2D we have the following property: 
$$
\mathrm{if}\;  \alpha_{jk} \geq 1 - s, \quad \mathrm{for \, any }\;  1 \leq j, k \leq K, \quad \mathrm{then }\quad \mathscr{N}_s(\Omega) = \{0\},
$$
where $\alpha_{jk} = \frac{\pi}{\omega_{jk}}$. However, if one of the numbers $ \alpha_{jk}$ is strictly less than $ 1 - s$, then $\mathrm{dim}\, \mathscr{N}_s(\Omega) = + \infty$. 
\end{remark}

\subsection{Interpolation of subspaces}

We recall in this subsection some interpolation results of subspaces. The first one is due to Ivanov and Kalton \cite{IK}. Let $(X_0, X_1)$ be a Banach couple with $X_0\cap X_1$ dense in $X_0$ and in $X_1$. Let $Y_0$ be a closed subspace of $X_0$ with codimension one. Setting for any $0 < \theta < 1$
$$
X_\theta = [X_0, X_1]_\theta \quad \mathrm{and}\quad Y_\theta = [Y_0, X_1]_\theta,
$$
we have the following result concerning the interpolation of subspaces:
 
\begin{theorem} [{\bf Ivanov-Kalton}]\label{thmIK} There exist two indices $0 \leq \sigma_0 \leq \sigma_1 \leq 1$ such that\\
i) If $0 < \theta < \sigma_0$, then the space $Y_\theta $ is a closed subspace of codimension one in the space $ X_\theta $.\\
ii) If $\sigma_0 \leq \theta \leq \sigma_1$, then the norm of $Y_\theta $ is not  equivalent to the norm of $X_\theta $.\\
iii) If $\sigma_1 < \theta < 1$, then $Y_\theta = X_\theta$ with equivalence of norms.
\end{theorem}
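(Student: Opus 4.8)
The plan is to transfer the question to the analysis of a single scalar analytic function on the strip $S=\{z\in\mathbb{C};\ 0<\mathrm{Re}\,z<1\}$. Since $Y_0$ is closed of codimension one in $X_0$, fix $\phi\in X_0^*$ with $\|\phi\|_{X_0^*}=1$ and $Y_0=\ker\phi$; as $X_0\cap X_1$ is dense in $X_0$, it is not contained in $Y_0$, so one may pick $e_0\in X_0\cap X_1$ with $\phi(e_0)=1$. Working with the Calder\'on space $\mathcal{F}(X_0,X_1)$ of bounded continuous maps on $\overline{S}$, analytic on $S$, with $f(it)\in X_0$ and $f(1+it)\in X_1$, the fact that $Y_0$ is closed in $X_0$ (and that $Y_0+X_1=X_0+X_1$) gives
$$
\mathcal{F}(Y_0,X_1)=\bigl\{\,f\in\mathcal{F}(X_0,X_1);\ \phi\bigl(f(it)\bigr)=0\ \text{ for all }t\in\mathbb{R}\,\bigr\}
$$
with identical norm; hence $x\in Y_\theta$ iff $x$ has a representative $f$ with $f(\theta)=x$ and $h:=\phi\circ f$ vanishing on the line $\mathrm{Re}\,z=0$. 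The obstruction to $x\in Y_\theta$ is thus carried entirely by the scalar function $h$.

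The two indices would then be produced from soft interpolation facts. Regime (iii), $Y_\theta=X_\theta$, propagates to larger $\theta$: by reiteration $Y_{\theta'}=[\,Y_\theta,X_1\,]_s=[\,X_\theta,X_1\,]_s=X_{\theta'}$ for $\theta<\theta'<1$. On the other side, the set of $\theta$ for which $\phi$ extends to $\widehat\phi_\theta\in(X_\theta)^*$ is an initial subinterval of $[0,1]$ (the map $\theta\mapsto\|\phi\|_{(X_\theta)^*}$ is finite on an interval, by the three-lines lemma together with reiteration); off it $X_0\cap X_1\cap\ker\phi$ is $X_\theta$-dense, so $Y_\theta$ contains the closed subspace $\overline{X_0\cap X_1\cap\ker\phi}^{\,X_\theta}$, of codimension at most one, while on it $Y_\theta\subseteq\ker\widehat\phi_\theta$ and the elementary lemma on interpolating the kernel of a functional bounded on the \emph{sum} $X_0+X_\theta$ --- proved by the explicit correction $g=f-(\phi\circ f)\,e_0$, which does not increase norms when $\phi$ is bounded at both endpoints --- shows that regime (i), $Y_\theta=\ker\widehat\phi_\theta$, propagates to smaller $\theta$. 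Since regimes (i) and (iii) are incompatible, taking $\sigma_0$ the supremum of the $\theta$'s in regime (i) and $\sigma_1$ the infimum of those in regime (iii) gives $0\le\sigma_0\le\sigma_1\le1$ together with (i) and (iii); and between $\sigma_0$ and $\sigma_1$ neither holds, so since $Y_\theta$ always contains a closed subspace of $X_\theta$ of codimension at most one, an equivalence of the norms of $Y_\theta$ and $X_\theta$ would force $Y_\theta$ to be a closed subspace of $X_\theta$ of codimension $0$ or $1$, i.e. regime (iii) or (i) --- a contradiction, which is (ii).

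The substantive content --- that $\sigma_0$ and $\sigma_1$ really sit where the analysis puts them, and that the three alternatives fit together at the transition points --- rests on a quantitative version of the correction above. For $x\in X_0\cap X_1$ and a near-optimal representative $f$, the function $h=\phi\circ f$ is bounded on $\overline{S}$, its modulus on $\mathrm{Re}\,z=0$ is controlled by $\|f\|_{\mathcal{F}}$, and $h(\theta)=\phi(x)$; to build an efficient competitor in $\mathcal{F}(Y_0,X_1)$ one again sets $g=f-h\,e(\cdot)$ with $e\colon\overline{S}\to X_0\cap X_1$ analytic and $\phi\circ e\equiv1$, and one must now keep $\sup_t\|e(it)\|_{X_0}$ and $\sup_t\|e(1+it)\|_{X_1}$ under control --- equivalently, one needs vectors with $\phi$-value $1$ that are simultaneously as small as desired in the $X_\theta$-norm yet not too large in $X_0$. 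Whether such a corrector exists with a uniform constant is governed by the harmonic measure of the strip (which, after conformally mapping $S$ onto a half-plane, is the source of the power-type weights distributing the constraint $\phi\circ e\equiv1$ between the two edges) and by the $H^p(S)$-factorization of $h$ used to divide out its zero at $\theta$; this balance fixes the positions of $\sigma_0$ and $\sigma_1$.

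I expect this last point --- the $H^p$-analysis on the strip behind the quantitative correction --- to be the main obstacle. The reduction to $\phi$, the description of $\mathcal{F}(Y_0,X_1)$, the propagation statements via reiteration, and the soft deduction of (ii) are routine; the sharp construction of the corrector $e$, i.e. the efficient transfer of ``mass'' from one edge of the strip to the other subject to $\phi\circ e\equiv1$, is the technical core, and is essentially the content of Ivanov and Kalton \cite{IK}.
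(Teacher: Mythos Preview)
The paper does not prove this statement; Theorem~\ref{thmIK} is quoted verbatim from Ivanov and Kalton \cite{IK} as a known result and is used only as a black box in Theorem~\ref{regpolthetasm}. There is therefore no ``paper's own proof'' to compare your attempt against.

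That said, your sketch is a faithful outline of the Ivanov--Kalton argument: the reduction to a single functional $\phi$ and the scalar analytic function $h=\phi\circ f$ on the strip is exactly their starting point, the propagation of regimes (i) and (iii) via reiteration is the right soft structure, and you correctly locate the hard analysis in the construction of the corrector $e$ and the $H^p$-factorization on the strip. Two small quibbles: your sentence ``$Y_\theta$ contains the closed subspace $\overline{X_0\cap X_1\cap\ker\phi}^{\,X_\theta}$'' has the inclusion the wrong way round --- the $X_\theta$-closure of $Y_0\cap X_1$ contains the image of $Y_\theta$, not conversely --- and the deduction of (ii) needs the observation that if $Y_\theta\hookrightarrow X_\theta$ is continuous and bijective then open mapping forces equivalence, so failure of (iii) with $Y_\theta$ dense means $Y_\theta\subsetneq X_\theta$ with strictly finer norm. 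But you flag the genuine obstacle yourself: the quantitative corrector construction is the whole theorem, and your last paragraph openly defers it to \cite{IK}. As a proof this is incomplete by your own admission; as a reading guide to \cite{IK} it is accurate.
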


As specified in  \cite{IK}, the special case of a Hilbert space of Sobolev type connected with elliptical boundary value problem was studied in \cite{Lions}, with the well known case $X_0 = H^1(\Omega),\,  X_1 = L^2(\Omega)$ and   $Y_0 = H^1_0(\Omega)$, but where  $Y_0$ is here a closed subspace of codimension infinite in $X_0$. We recall that the corresponding critical values are $\sigma_0 = \sigma_1 = 1/2$ and $Y_{1/2} = H^{1/2}_{00}(\Omega)$.\medskip

The above theorem is generalized  by Asekritova, Cobos and Kruglyak \cite{ACK} when $Y_0$ is a closed subset of finite codimension $n$ in $X_0$:

\begin{theorem} [{\bf Asekritova-Cobos-Kruglyak}]\label{thmACK} There exist $2n$ indices satisfying $0 \leq \sigma_{0j} \leq \sigma_{1j} \leq 1$, $j = 1, \ldots, n$ and such that 
\begin{equation*}\label{CNSclose}
Y_\theta \; \; \mathrm{is\,\, closed\,\, in} \; \; X_\theta \quad \Longleftrightarrow \quad \theta \notin \displaystyle \bigcup_{j= 1}^{j= n} [\sigma_{0j} , \sigma_{1j} ].
\end{equation*}
Moreover, in that case if the cardinal 
\begin{equation*}\label{Cardinter}
\vert\{j \in \{1, \ldots, n \}; \; \theta < \sigma_{0j}\}\vert \quad \mathrm{is\,\, equal\,\, to}\;  k,
\end{equation*}
then the space $Y_\theta$ is a closed subspace of codimension $k$ in $X_\theta$.
\end{theorem}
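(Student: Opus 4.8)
The plan is to reduce the finite-codimensional statement to $n$ coupled instances of the codimension-one Theorem~\ref{thmIK} by passing to the dual scale. First I would fix linearly independent functionals $\varphi_1,\dots,\varphi_n\in X_0^{*}$ with $Y_0=\bigcap_{j=1}^{n}\ker\varphi_j$, which is possible since $Y_0$ is closed of codimension $n$. Under the standing hypothesis that $X_0\cap X_1$ is dense in $X_0$ and in $X_1$ (so that, in the reflexive setting relevant to the Sobolev couples of interest, the duality is automatic), the duality theorem for the complex method gives $X_\theta^{*}=[X_0^{*},X_1^{*}]^{\theta}$ and $Y_\theta^{*}=[Y_0^{*},X_1^{*}]^{\theta}$, and the inclusion $Y_0\hookrightarrow X_0$ dualises to the restriction map $r_\theta\colon [X_0^{*},X_1^{*}]^{\theta}\to[Y_0^{*},X_1^{*}]^{\theta}$. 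By the closed range theorem, $Y_\theta$ is closed in $X_\theta$ with a norm equivalent to the one induced by $X_\theta$ if and only if $r_\theta$ has closed range, and in that case $\operatorname{codim}_{X_\theta}Y_\theta=\dim\ker r_\theta$. Since $\ker r_\theta$ consists precisely of those elements of $[X_0^{*},X_1^{*}]^{\theta}$ that annihilate $Y_0$, i.e. that belong to $W:=\operatorname{span}(\varphi_1,\dots,\varphi_n)$, the whole problem reduces to understanding $W\cap[X_0^{*},X_1^{*}]^{\theta}$ and the closedness of $r_\theta$ as $\theta$ ranges over $(0,1)$.

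The second and principal step is to decouple the $n$-dimensional space $W$ with respect to the dual scale $\bigl([X_0^{*},X_1^{*}]^{\theta}\bigr)_{0<\theta<1}$. To each nonzero $\varphi\in W$ one attaches, by Theorem~\ref{thmIK} applied to the codimension-one pair $\ker\varphi\hookrightarrow X_0$, a transition interval $[\sigma_0(\varphi),\sigma_1(\varphi)]$; in particular $\varphi\in[X_0^{*},X_1^{*}]^{\theta}$ for $\theta<\sigma_0(\varphi)$ and $\varphi\notin[X_0^{*},X_1^{*}]^{\theta}$ for $\theta>\sigma_1(\varphi)$. Running a Gram--Schmidt--type greedy selection inside $W$ --- at each stage choosing a functional whose extension range is maximal modulo the ones already selected --- I would produce a basis $\psi_1,\dots,\psi_n$ of $W$ together with $2n$ indices $0\le\sigma_{0j}\le\sigma_{1j}\le 1$ such that $[\sigma_0(\psi_j),\sigma_1(\psi_j)]=[\sigma_{0j},\sigma_{1j}]$ for $j=1,\dots,n$. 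The crucial lemma to be proved is that this basis is adapted to the scale: for every $\theta\notin\bigcup_{j}[\sigma_{0j},\sigma_{1j}]$ one has $W\cap[X_0^{*},X_1^{*}]^{\theta}=\operatorname{span}\{\psi_j:\sigma_{0j}>\theta\}$ and $r_\theta$ has closed range, whereas for $\theta$ belonging to some $[\sigma_{0j},\sigma_{1j}]$ the closed-range property fails, the obstruction being exactly the one supplied by part~(ii) of Theorem~\ref{thmIK} for the corresponding codimension-one pair.

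Granting this lemma, the conclusion follows at once. For $\theta\notin\bigcup_{j}[\sigma_{0j},\sigma_{1j}]$ the space $Y_\theta$ is closed in $X_\theta$ and $\operatorname{codim}_{X_\theta}Y_\theta=\dim\bigl(W\cap[X_0^{*},X_1^{*}]^{\theta}\bigr)=\bigl|\{j:\theta<\sigma_{0j}\}\bigr|$, which is the stated count --- and in particular it equals $n$ when $\theta$ is below all the $\sigma_{0j}$, and $0$ when $\theta$ is above all the $\sigma_{1j}$, recovering $Y_\theta=X_\theta$ there. For $\theta\in\bigcup_{j}[\sigma_{0j},\sigma_{1j}]$ the map $r_\theta$ does not have closed range, hence $Y_\theta$ fails to be closed in $X_\theta$.

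I expect the real difficulty to lie entirely in the second step. The family $(\varphi_j)$ is canonical only up to the action of $GL_n$, and a combination $\sum_j c_j\varphi_j$ may have an extension range strictly larger --- or strictly smaller --- than those of the individual $\varphi_j$; one must therefore show that the greedy basis $(\psi_j)$ is well defined and genuinely diagonalises the interaction of $W$ with the scale. This amounts to carrying out the Ivanov--Kalton analysis not for a single functional but stably under a finite-dimensional perturbation, and it is here that the reiteration properties of the complex method are needed to pin down the transition intervals. The boundary exponents $\theta\in\{\sigma_{0j},\sigma_{1j}\}$ and the degenerate configurations ($\sigma_{0j}=0$, $\sigma_{1j}=1$, or coincidences and overlaps among the $\sigma$'s, in which case $\bigcup_{j}[\sigma_{0j},\sigma_{1j}]$ is a union of fewer than $n$ intervals) require extra but routine bookkeeping. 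An alternative to the greedy-basis argument would be to encode $Y_0$ through the finite-rank operator $T\colon X_0\to\mathbb{C}^{n}$, $Tx=(\varphi_1(x),\dots,\varphi_n(x))$, and to invoke the characterisation of interpolation of closed subspaces in terms of invertibility of operators from \cite{ACK}; this replaces the combinatorial bookkeeping by an abstract functional-analytic input but does not, I believe, bypass the core stability issue.
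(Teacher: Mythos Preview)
The paper does not prove Theorem~\ref{thmACK}. It is stated as a quotation of an external result due to Asekritova, Cobos and Kruglyak (reference~\cite{ACK}), generalising the codimension-one Theorem~\ref{thmIK} of Ivanov--Kalton, and is used as a black box in the proof of Theorem~\ref{regpolthetasm+}. There is therefore no proof in the paper to compare your proposal against.

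That said, your sketch is broadly in the spirit of the original argument in \cite{ACK}: one does work on the dual side, the codimension count is indeed governed by which elements of the annihilator $W$ of $Y_0$ survive in the dual interpolation scale, and the obstruction to closedness at the transition values is the same as in the codimension-one case. Where your outline is soft is exactly where you say it is: the ``greedy'' selection of a basis $(\psi_j)$ adapted to the scale is not a triviality, because extension ranges are not monotone under linear combinations, and the assertion that $r_\theta$ has closed range precisely outside $\bigcup_j[\sigma_{0j},\sigma_{1j}]$ requires real work (in \cite{ACK} this is handled through their characterisation of interpolation of closed subspaces via invertibility of operators, which you mention as an alternative). Your proposal is a reasonable plan rather than a proof; if your aim were to reproduce the result you would need to supply the stability lemma in full, or simply cite \cite{ACK} as the paper does.
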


\subsection{ $H^s(\Omega)$-regularity.}

Recall now the following result due to Grisvard (see Theorem 2.4 in \cite{Gri1} and  Theorem 2.1 in \cite{Gri2}). 

\begin{theorem} [{\bf Grisvard, $H^2$-Regularity}]\label{theoGrisvardPol} Let $\Omega$ be a polygonal domain of $\mathbb{R}^2$  or a polyhedral domain of $\mathbb{R}^3$. \\
i) The following inequality holds: there exists a constant $C(\Omega)$ such that
\begin{equation}\label{inegH2}
\forall v \in H^2(\Omega)\cap H^{1}_0(\Omega),  \quad \Vert v \Vert_{H^2(\Omega)} \leq C(\Omega) \Vert \Delta v \Vert_{L^2(\Omega)}. 
\end{equation}
ii) The following  operator is an isomorphism
\begin{equation}\label{DeltaIsoH3/2H1/2ortx}
\Delta : H^2(\Omega)\cap H^{1}_0(\Omega) \longrightarrow   [\mathscr{N}_0(\Omega)]^\bot ,
\end{equation}
where 
$$
  [\mathscr{N}_0(\Omega)]^\bot  = \{f \in L^2(\Omega); \; \forall \varphi\in \mathscr{N}_0(\Omega), \;  \int_\Omega f\varphi = 0\}.
  $$
iii) Codim$\mathrm{(}$Im $\Delta \mathrm{)}$  is finite in $2$D and infinite in $3D$.
\end{theorem}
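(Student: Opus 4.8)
I would prove the three assertions in one sweep: establish (i) by a localization argument whose only substantial ingredient is a model estimate on an infinite plane sector (in $2$D) or dihedral wedge (in $3$D), then deduce (ii) from (i) together with the classical splitting of the variational solution into a regular part and a finite combination of corner singularities, and finally read off (iii) from the dimension count of $\mathscr{N}_0(\Omega)$ recorded in \eqref{dimang} and Remark~\ref{remNoyau}. For the localization, cover $\overline{\Omega}$ by finitely many open sets of three kinds --- balls inside $\Omega$, balls centred in the relative interior of an edge (where $\partial\Omega$ lies in a hyperplane), and conical, resp. dihedral, neighbourhoods of the vertices, resp. edges --- and take a subordinate partition of unity $\{\chi_\ell\}$. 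For $v\in H^2(\Omega)\cap H^{1}_0(\Omega)$ each $w_\ell=\chi_\ell v$ belongs to $H^2\cap H^{1}_0$ of the corresponding model domain, with $\Delta w_\ell=\chi_\ell\Delta v+2\nabla\chi_\ell\cdot\nabla v+v\,\Delta\chi_\ell$, so $\Vert\Delta w_\ell\Vert_{L^2}\leq C\big(\Vert\Delta v\Vert_{L^2(\Omega)}+\Vert v\Vert_{H^1(\Omega)}\big)\leq C\big(\Vert\Delta v\Vert_{L^2(\Omega)}+\Vert v\Vert_{L^2(\Omega)}\big)$, the last step because $\Vert\nabla v\Vert_{L^2}^2=-\int_\Omega v\,\Delta v$. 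It therefore suffices to prove, on each model domain, that $\Vert w\Vert_{H^2}\leq C\Vert\Delta w\Vert_{L^2}$ for $w\in H^2\cap H^{1}_0$ with support in a fixed bounded set. On the full space this is the identity $\Vert D^2w\Vert_{L^2}=\Vert\Delta w\Vert_{L^2}$, and on a half-space with Dirichlet data on the flat face it follows by odd reflection across that face.

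The heart of the matter is the sector/wedge estimate. On $S_\omega=\{(r,\theta):0<r,\ 0<\theta<\omega\}$ with $\omega\in(0,2\pi)$, expand $w(r,\theta)=\sum_{k\geq1}w_k(r)\sin(k\pi\theta/\omega)$, which diagonalises $\Delta$ into the operators $w_k\mapsto w_k''+r^{-1}w_k'-k^2\pi^2\omega^{-2}r^{-2}w_k$. The Mellin transform in $r$ along the line $\mathrm{Re}\,\lambda=1$ converts each into multiplication by $\lambda^2-k^2\pi^2/\omega^2$; membership of $w$ in $H^2$ with compact support forces each Mellin symbol $\widetilde{w}_k$ to be holomorphic and suitably decaying for $\mathrm{Re}\,\lambda\geq1$ --- equivalently, $w$ contains no component along a singular function $r^{k\pi/\omega}\sin(k\pi\theta/\omega)$ with $k\pi/\omega<1$ --- and on that line one has the uniform lower bound $|\lambda^2-k^2\pi^2/\omega^2|\geq c(\omega)(|\lambda|^2+k^2)$, valid exactly because $\omega\neq\pi$ keeps the poles $\{k\pi/\omega\}_{k\geq1}$ off the contour $\mathrm{Re}\,\lambda=1$. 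Dividing, squaring, summing over $k$ and using the Parseval identities for the Mellin transform and the angular expansion yields $\Vert w\Vert_{H^2(S_\omega)}\leq C(\omega)\Vert\Delta w\Vert_{L^2(S_\omega)}$. In $3$D the same scheme works at a wedge of opening $\omega_{jk}$: a partial Fourier transform in the edge variable reduces to the Dirichlet problem for $-\Delta_{(r,\theta)}+\zeta^2$ on $S_{\omega_{jk}}$, handled by the Mellin argument with constants uniform in $\zeta$; the polyhedral vertices are more delicate but, together with the flat faces, contribute only model cases already settled. This sector/wedge analysis --- its uniformity in the angular index $k$ and, in $3$D, in $\zeta$, and the bookkeeping at polyhedral vertices --- is the step I expect to be the real obstacle.

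Summing the model estimates over $\{\chi_\ell\}$ gives $\Vert v\Vert_{H^2(\Omega)}\leq C\big(\Vert\Delta v\Vert_{L^2(\Omega)}+\Vert v\Vert_{L^2(\Omega)}\big)$ for all $v\in H^2(\Omega)\cap H^{1}_0(\Omega)$. Since $\Delta:H^2(\Omega)\cap H^{1}_0(\Omega)\to L^2(\Omega)$ is injective ($\Delta v=0$ forces $\int_\Omega|\nabla v|^2=0$, hence $v=0$; Theorem~\ref{unicityH1demi} applies a fortiori) and the embedding $H^2(\Omega)\hookrightarrow L^2(\Omega)$ is compact, a routine contradiction argument removes the lower-order term $\Vert v\Vert_{L^2}$ and yields \eqref{inegH2}; in particular $\mathrm{Im}\,\Delta$ is closed and $\Delta$ is an isomorphism onto it. For \eqref{DeltaIsoH3/2H1/2ortx}, Green's formula --- legitimate since every $\varphi\in\mathscr{N}_0(\Omega)$ is smooth away from the reentrant vertices and, near them, a finite combination of the explicit harmonic functions $r^{-k\pi/\omega_j}\sin(k\pi\theta/\omega_j)$, to which a truncation-and-limit argument applies --- gives $\int_\Omega\Delta v\,\varphi=\int_\Omega v\,\Delta\varphi=0$ for $v\in H^2(\Omega)\cap H^{1}_0(\Omega)$ and $\varphi\in\mathscr{N}_0(\Omega)$, so $\mathrm{Im}\,\Delta\subseteq[\mathscr{N}_0(\Omega)]^\bot$. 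Conversely, given $f\in[\mathscr{N}_0(\Omega)]^\bot$, let $u\in H^{1}_0(\Omega)$ solve $-\Delta u=f$; corner/edge regularity theory furnishes a splitting $u=u_{\mathrm{reg}}+\sum_j\sum_{0<k\pi/\omega_j<1}c_{j,k}\,S_{j,k}$ with $u_{\mathrm{reg}}\in H^2(\Omega)\cap H^{1}_0(\Omega)$ and $S_{j,k}\sim r^{k\pi/\omega_j}\sin(k\pi\theta/\omega_j)$, the coefficients $c_{j,k}$ being computed by pairing $f$ against the corresponding dual singular functions, which are (localised) generators of $\mathscr{N}_0(\Omega)$; the orthogonality $f\perp\mathscr{N}_0(\Omega)$ annihilates every $c_{j,k}$, hence $u=u_{\mathrm{reg}}\in H^2(\Omega)\cap H^{1}_0(\Omega)$ with $-\Delta u=f$. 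This gives surjectivity onto $[\mathscr{N}_0(\Omega)]^\bot$ and proves \eqref{DeltaIsoH3/2H1/2ortx}.

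Finally, (iii) is immediate from (ii): $\mathrm{codim}\,(\mathrm{Im}\,\Delta)=\dim\mathscr{N}_0(\Omega)$, which in $2$D equals the number of reentrant vertices by \eqref{dimang} with $s=0$ (and is $0$ iff $\Omega$ is convex, Remark~\ref{remNoyau}\,i)), hence finite; whereas in $3$D, as recalled in Remark~\ref{remNoyau}, an edge with opening $\omega_{jk}>\pi$ carries an infinite, edge-parametrised family of linearly independent harmonic singular functions, so $\dim\mathscr{N}_0(\Omega)=+\infty$ and the codimension is infinite (it is $0$ for a convex polyhedron).
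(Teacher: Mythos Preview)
Your argument is essentially correct and follows the Kondrat'ev--Dauge route (localization, Mellin analysis on model sectors/wedges, closed-range plus singular-function decomposition). The paper, however, does not prove this theorem: it is quoted from Grisvard with references, and the only proof content given is the sketch in Remark~\ref{remtrace}\,iii), which is a quite different and much more elementary argument for part~(i). Grisvard's method is the direct integration-by-parts identity
\[
\int_\Omega \partial_{xx}v\,\partial_{yy}v=\int_\Omega|\partial_{xy}v|^2\qquad\text{for }v\in H^2(\Omega)\cap H^1_0(\Omega),
\]
from which $\|\nabla^2 v\|_{L^2(\Omega)}=\|\Delta v\|_{L^2(\Omega)}$ follows at once, and then Poincar\'e yields $\Vert v\Vert_{H^2(\Omega)}\le(1+C_P(\Omega)^2)^{1/2}\Vert\Delta v\Vert_{L^2(\Omega)}$; the polyhedral case is handled analogously. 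No localization, no Mellin transform, no compactness argument to absorb lower-order terms.

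The practical difference matters for this paper. Grisvard's identity gives a constant in \eqref{inegH2} depending \emph{only} on the Poincar\'e constant of $\Omega$, not on the opening angles; this is exactly what is exploited in Proposition~\ref{corGrisvardPol} and, through it, in the counterexample of Section~\ref{counterexample}, where one needs $C(\Omega_\varepsilon)$ to remain bounded as $\varepsilon\to0$ while the polygon $\Omega_\varepsilon$ acquires more and more corners. Your Mellin-based constant $C(\omega)$ would a priori depend on the full set of angles, and the 3D vertex case (a polyhedral cone, requiring the spectrum of the Laplace--Beltrami operator on a spherical polygon) is not, contrary to your parenthetical, one of the ``model cases already settled'' by the sector/wedge analysis --- it is a genuinely separate step. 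Your approach is more flexible (it generalises to variable coefficients and other operators) and your treatment of (ii)--(iii) via the closed range and the pairing with dual singular functions is fine; but for the purposes of this paper the elementary identity is both sufficient and necessary.
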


In fact the constant $C(\Omega)$ depends only on the Poincar\'e constant of $\Omega$ (see \eqref{inegisoH2L2} in next remark point iii)).

\begin{remark}\label{remtrace}\upshape i) It is natural to ask the question of the meaning of traces for the functions $v \in L^2(\Omega)$ satisfying $\Delta v \in L^2(\Omega)$  when  $\Omega$ is a polygonal domain of $\mathbb{R}^2$  or a polyhedral domain of $\mathbb{R}^3$.  In \cite{Gri1} (see Lemma 3.2), the author define the trace of such function as an element of $[X(\Omega)]'$, where $X(\Omega)$ is the  space  described by $\frac{\partial \varphi}{\partial\textit{\textbf n}} $ when $\varphi$ browse through the space $H^{2}(\Omega)\cap H^{1}_0(\Omega)$.\medskip

\noindent ii) Unlike the case where the domain $\Omega$ is convex or is regular, of class $\mathscr{C}^{1,1}$ for example, the kernel $\mathscr{N}_0(\Omega)$ is not trivial. In the polygonal case,  as mentioned above (see also Subsection \ref{ssCharkernels}), it is of finite dimension and its dimension is equal to the number of vertices of the polygon whose corresponding interior angle is strictly greater than  $\pi$.\medskip

\noindent iii) To establish the fundamental inequality \eqref{inegH2}
 the author shows that for any $v\in  H^2(\Omega)\cap H^{1}_0(\Omega)$
\begin{equation*}
\int_\Omega \frac{\partial ^2 v}{\partial x^{2}}\, \frac{\partial ^2 v}{\partial y^{2}} =  \int_\Omega \big\vert \frac{\partial ^2 v}{\partial x \partial y}\big\vert^2. 
\end{equation*}
Therefore
\begin{equation*}
\Vert \Delta v \Vert^2_{L^{2}(\Omega)} = \Vert \frac{\partial ^2 v}{\partial x^{2}} \Vert^2_{L^2(\Omega)} + \Vert \frac{\partial ^2 v}{\partial y^{2}} \Vert^2_{L^2(\Omega)} + 2  \int_\Omega  \frac{\partial ^2 v}{\partial x^{2}}  \frac{\partial ^2 v}{\partial y^{2}}. 
\end{equation*}
Denoting by $\vert \cdot \vert_{H^{2}(\Omega)} $ the semi-norm $H^{2}(\Omega)$ and by $\nabla^2$ the Hessian matrix, we deduce that
\begin{equation}\label{inegH2DeltaL2H1}
\vert v \vert_{H^{2}(\Omega)} = \Vert \nabla^2 v \Vert_{L^{2}(\Omega)} =  \Vert \Delta v \Vert_{L^{2}(\Omega)}\quad \mathrm{and}\quad  \Vert v \Vert^2_{H^{2}(\Omega)}  \leq  \Vert \Delta v \Vert^2_{L^{2}(\Omega)} +  \Vert v \Vert^2_{H^{1}(\Omega)}.
\end{equation}
Besides,
\begin{equation*}
\int_\Omega \vert \nabla v\vert^2 = - \int_\Omega v \Delta v \leq C_P(\Omega) \Vert \nabla v \Vert_{L^{2}(\Omega)}\Vert \Delta v \Vert_{L^{2}(\Omega)},
\end{equation*}
where $C_P(\Omega)$ is the Poincar\'e constant.
And then
\begin{equation*}
\Vert \nabla v \Vert_{L^{2}(\Omega)} \leq C_P(\Omega)\Vert \Delta v \Vert_{L^{2}(\Omega)}.
\end{equation*}
Finally, we get the following estimate
\begin{equation}\label{inegisoH2L2}
\Vert v \Vert_{H^{2}(\Omega)}  \leq (1 +  C^2_P(\Omega))^{1/2} \Vert \Delta v \Vert_{L^{2}(\Omega)}.
\end{equation}
The polyedral case can be treated in the same way.
\end{remark}  

The solvability of problem 
$$
(\mathscr{L}_D^0)\ \ \ \  -\Delta u = f\quad \ \mbox{in}\ \Omega \quad
\mbox{and } \quad u = 0 \ \ \mbox{on }\Gamma,
$$ 
in a framework of fractional Sobolev spaces when $\Omega$ is a polygon or a polyhedron, and naturally also when   $\Omega$ is a general Lipschitz domain, has been studied by many authors.  Recall that when  
 $\Omega$ is a general Lipschitz domain, for any $1/2 < s < 3/2$, the operator
\begin{equation*}\label{isoDeltaLip}
 \Delta : H^{2- s}_0(\Omega) \longrightarrow H^{-s}(\Omega)
\end{equation*}
is an isomorphism. So, the question is: what happens in the case where $\Omega$ is a polygonal domain of $\mathbb{R}^2$  or a polyhedral domain of $\mathbb{R}^3$ and where $0 \leq s \leq 1/2$? A first answer can be given by the following corollary.

\begin{corollary}\label{firstcor} Let $\Omega$ be a polygonal domain of $\mathbb{R}^2$  or a polyhedral domain of $\mathbb{R}^3$. Then for any $0 < \theta < 1$, the operator
\begin{equation}\label{DeltaIsoH2-thetaH-thetabbis}
 \Delta : H^{2-\theta}(\Omega) \cap H^1_0(\Omega)\longrightarrow M_\theta(\Omega)
\end{equation}
is an isomorphism, where 
$$
M_\theta(\Omega) : = [(\mathscr{N}_0(\Omega))^\bot,  H^{-1}(\Omega)]_{\theta}
$$
 is continuously embedded in $  H^{-\theta}(\Omega)$ if $\theta \neq 1/2$ and in $[H^{1/2}_{00}(\Omega)]'$ if $\theta = 1/2$.
\end{corollary}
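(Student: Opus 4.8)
\textbf{Proof plan for Corollary \ref{firstcor}.}

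The strategy is to obtain the isomorphism \eqref{DeltaIsoH2-thetaH-thetabbis} by interpolating between the two endpoints $\theta = 0$ and $\theta = 1$. At $\theta = 0$, Theorem \ref{theoGrisvardPol} ii) gives the isomorphism $\Delta : H^2(\Omega)\cap H^1_0(\Omega) \longrightarrow (\mathscr{N}_0(\Omega))^\bot$. At $\theta = 1$, the classical Lipschitz theory recalled just above (the case $s = 1$ of $\Delta : H^{2-s}_0(\Omega)\to H^{-s}(\Omega)$) gives the isomorphism $\Delta : H^1_0(\Omega)\longrightarrow H^{-1}(\Omega)$; here $H^1(\Omega)\cap H^1_0(\Omega) = H^1_0(\Omega)$ and $H^{-1}(\Omega) = [H^1_0(\Omega)]'$. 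Applying the complex interpolation functor $[\,\cdot\,,\,\cdot\,]_\theta$ to these two isomorphisms, and using that interpolation of a bounded isomorphism between compatible couples is again a bounded isomorphism onto the interpolated target, one gets that
$$
\Delta : \big[H^2(\Omega)\cap H^1_0(\Omega),\ H^1_0(\Omega)\big]_\theta \longrightarrow \big[(\mathscr{N}_0(\Omega))^\bot,\ H^{-1}(\Omega)\big]_\theta = M_\theta(\Omega)
$$
is an isomorphism for every $0 < \theta < 1$.

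The first real point to settle is the identification of the domain space: one must show $[H^2(\Omega)\cap H^1_0(\Omega),\ H^1_0(\Omega)]_\theta = H^{2-\theta}(\Omega)\cap H^1_0(\Omega)$. Write $A_0 = H^2(\Omega)\cap H^1_0(\Omega)$, $A_1 = H^1_0(\Omega)$. Since both are closed subspaces of, respectively, $H^2(\Omega)$ and $H^1(\Omega)$, and the ambient interpolation $[H^2(\Omega), H^1(\Omega)]_\theta = H^{2-\theta}(\Omega)$ is standard, the content is that the interpolation space sits inside $H^1_0(\Omega)$ and fills it up. The inclusion $[A_0,A_1]_\theta \hookrightarrow H^{2-\theta}(\Omega)$ is immediate from monotonicity of the functor. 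For the boundary behaviour, note $2-\theta > 1 > 1/2$, so $2-\theta \notin \{1/2\}+\mathbb{N}$ and the trace characterization of $H^s_0(\Omega)$ recalled in Section \ref{rap} (namely $u\in H^{2-\theta}_0(\Omega) \Longleftrightarrow u\in H^{2-\theta}(\Omega)$ and $u = 0$ on $\Gamma$, since $2-\theta - 1/2 < 3/2$ forces only the single condition $j = 0$) shows $H^{2-\theta}(\Omega)\cap H^1_0(\Omega) = H^{2-\theta}_0(\Omega)$. It then suffices to invoke the interpolation identity for the scale $H^s_0(\Omega)$ stated in Section \ref{rap}: $[H^2_0(\Omega), H^1_0(\Omega)]_\theta$ would give $H^{2-\theta}_0(\Omega)$ — but the domain here is $H^2(\Omega)\cap H^1_0(\Omega)$, not $H^2_0(\Omega)$, so one needs instead the interpolation of the pair $(H^2(\Omega)\cap H^1_0(\Omega), H^1_0(\Omega))$. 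This is precisely the type of interpolation-of-subspaces problem addressed by Grisvard and by the Lions framework mentioned after Theorem \ref{thmIK}: since $2-\theta$ never hits the critical value $1/2$, no $H_{00}$ correction appears, and $[H^2(\Omega)\cap H^1_0(\Omega), H^1_0(\Omega)]_\theta = H^{2-\theta}_0(\Omega) = H^{2-\theta}(\Omega)\cap H^1_0(\Omega)$ for all $0 < \theta < 1$.

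Next I would record the embedding claims for $M_\theta(\Omega)$. By definition $M_\theta(\Omega) = [(\mathscr{N}_0(\Omega))^\bot, H^{-1}(\Omega)]_\theta$. Since $(\mathscr{N}_0(\Omega))^\bot$ is a closed subspace of $L^2(\Omega)$ of finite codimension equal to $\dim \mathscr{N}_0(\Omega)$ (Theorem \ref{theoGrisvardPol} together with \eqref{dimang}), this is again an interpolation-of-subspaces situation, now of the dual scale: one interpolates the closed finite-codimension subspace $(\mathscr{N}_0(\Omega))^\bot \subset L^2(\Omega)$ against $H^{-1}(\Omega) = [H^1_0(\Omega)]'$. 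The ambient interpolation gives $[L^2(\Omega), H^{-1}(\Omega)]_\theta = H^{-\theta}(\Omega)$ for $\theta \neq 1/2$ and $= [H^{1/2}_{00}(\Omega)]'$ for $\theta = 1/2$ (dualizing the identities of Section \ref{rap}, using that $L^2(\Omega) = [L^2(\Omega)]'$ and $[H^1_0(\Omega), L^2(\Omega)]_{1/2} = H^{1/2}_{00}(\Omega)$). Hence $M_\theta(\Omega) \hookrightarrow [L^2(\Omega), H^{-1}(\Omega)]_\theta$ continuously, which is $H^{-\theta}(\Omega)$ for $\theta \neq 1/2$ and $[H^{1/2}_{00}(\Omega)]'$ for $\theta = 1/2$, as claimed. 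I would remark that this embedding is in general strict: by Theorems \ref{thmIK} and \ref{thmACK}, $M_\theta(\Omega)$ is a closed subspace of positive codimension in $H^{-\theta}(\Omega)$ precisely on an interval $[\sigma_0, \sigma_1]$ of values of $\theta$ determined by the non-convexity of $\Omega$ (via the exponents $1-\alpha_j$), which is why the statement only asserts a continuous embedding and not equality.

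\textbf{Main obstacle.} The delicate step is the identification of the interpolation spaces for the two pairs of \emph{subspaces} — $(A_0, A_1)$ on the domain side and $((\mathscr{N}_0(\Omega))^\bot, H^{-1}(\Omega))$ on the target side — rather than for the full Sobolev scale. Over a general Lipschitz domain this would be genuinely subtle because of the $H_{00}$ phenomenon at $s = 1/2$ and the codimension jumps described in Theorem \ref{thmACK}; here we are rescued by the fact that on the domain side the running smoothness index $2-\theta$ stays strictly above $1$ for all $\theta \in (0,1)$, hence never meets the bad value $1/2$, so the subspace interpolation is regular and coincides with $H^{2-\theta}_0(\Omega)$. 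On the target side one does pass through $\theta = 1/2$, but the only effect is the replacement of $H^{-1/2}(\Omega)$ by $[H^{1/2}_{00}(\Omega)]'$, which is exactly what the statement records; the possible codimension drop of $M_\theta$ inside $H^{-\theta}(\Omega)$ is harmless because we only claim a continuous embedding there. Once these two identifications are in hand, the isomorphism \eqref{DeltaIsoH2-thetaH-thetabbis} follows formally by interpolating the two endpoint isomorphisms, and the embedding properties of $M_\theta(\Omega)$ follow from the target-side identification.
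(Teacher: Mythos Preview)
Your approach is the same as the paper's: interpolate between the $H^2$ isomorphism of Theorem~\ref{theoGrisvardPol}~ii) and the classical $H^1_0 \to H^{-1}$ isomorphism, identify the domain interpolation space as $H^{2-\theta}(\Omega)\cap H^1_0(\Omega)$, and read off the embedding of $M_\theta(\Omega)$ from monotonicity of the functor applied to $(\mathscr{N}_0(\Omega))^\bot \subset L^2(\Omega)$.

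There is, however, one genuine soft spot. You write ``using that interpolation of a bounded isomorphism between compatible couples is again a bounded isomorphism onto the interpolated target'' as if it were a general fact. It is not: the paper's very next remark cites a counterexample of Fabes--Jodeit with an operator invertible on $L^p(\mathbb{R})$ and $L^q(\mathbb{R})$ but not on some intermediate $L^r(\mathbb{R})$. What makes the interpolation work \emph{here} is that the two endpoint inverses are restrictions of one and the same operator $S=\Delta^{-1}:H^{-1}(\Omega)\to H^1_0(\Omega)$ (because $(\mathscr{N}_0(\Omega))^\bot\subset H^{-1}(\Omega)$ and the $H^1_0$ solution is unique), so $S$ itself can be interpolated and gives a two-sided inverse on $M_\theta(\Omega)$. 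The paper makes exactly this point: it interpolates $S$, checks injectivity, and then (somewhat redundantly) argues surjectivity via the density of $(\mathscr{N}_0(\Omega))^\bot$ in $H^{-1}(\Omega)$ and hence in $M_\theta(\Omega)$. You should state explicitly why the inverse interpolates, rather than invoke a false general principle.

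A minor correction to your side remark: by Theorem~\ref{thmIK}, on the critical interval $[\sigma_0,\sigma_1]$ the norm of $M_\theta(\Omega)$ is \emph{not} equivalent to that of $H^{-\theta}(\Omega)$, so $M_\theta(\Omega)$ is \emph{not} a closed subspace there; it is a closed subspace of positive codimension only for $\theta<\sigma_0$, and equals the ambient space for $\theta>\sigma_1$. This does not affect the corollary, which only claims a continuous embedding.
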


\begin{proof} i) First, let us recall that the operator
\begin{equation}\label{DeltaIsoH10H-1a}
\Delta : \; H^{1}_{0}(\Omega)\longrightarrow  H^{-1}(\Omega)
\end{equation}
is an isomorphism.  By a simple interpolation argument, thanks to \eqref{DeltaIsoH3/2H1/2ortx} and \eqref{DeltaIsoH10H-1a}, we deduce that  for any $0 < \theta < 1$, the operator
\begin{equation}\label{DeltaIsoH2-thetaH-thetab}
 \Delta : H^{2-\theta}(\Omega) \cap H^1_0(\Omega)\longrightarrow [[\mathscr{N}_0(\Omega)]^\bot,  H^{-1}(\Omega)]_{\theta}
\end{equation}
is an isomorphism. Moreover, using Fredholm alternative, the Laplace operator considered as operating from $H^{2-\theta}(\Omega) \cap H^1_0(\Omega)$  into $H^{-\theta}(\Omega)$  if $\theta \neq 1/2$, resp. into $[H^{1/2}_{00}(\Omega)]'$ if $\theta = 1/2$, verifies the relation:
\begin{equation}\label{MKer}
\overline{M_\theta(\Omega)} = [Ker (\Delta^\star)]^\bot = [\mathscr{N}_{-\theta}(\Omega)]^\bot,
\end{equation}
where 
$$
[\mathscr{N}_{-\theta}(\Omega)]^\bot = \{f\in H^{-\theta}(\Omega); \, \langle f, \varphi\rangle = 0, \, \forall \varphi \in \mathscr{N}_{-\theta}(\Omega)\}
$$
 if $\theta \neq 1/2$ (for $\theta = 1/2$, replace $H^{-\theta}(\Omega)$ by $[H^{1/2}_{00}(\Omega)]'$). Note that $\mathscr{N}_{-\theta}(\Omega) = \{0\}$ for $\theta \geq 1/2$.\medskip
 
 ii) Let us prove that the operator \eqref{DeltaIsoH2-thetaH-thetab} is an isomorphism.  Indeed,  setting $S= \Delta^{-1}$ we know from \eqref{DeltaIsoH3/2H1/2ortx} and \eqref{DeltaIsoH10H-1a} that  the operators $S : \; H^{-1}(\Omega))\longrightarrow  H^{1}_{0}(\Omega)$ and $S : \; [\mathscr{N}_0(\Omega)]^\bot\longrightarrow  H^2(\Omega)\cap H^{1}_0(\Omega)$ are continuous. So by interpolation, we  also  have the continuity of the operator 
\begin{equation}\label{opS}
 S : \; M_\theta(\Omega) \longrightarrow  H^{2-\theta}(\Omega) \cap H^1_0(\Omega).
 \end{equation}
 Clearly this last operator is injective. Let us prove its surjectivity. Before that recall that the density of a Banach $X$ in a Banach $Y$ implies the density of $X$ in the interpolate space $[X, Y]_\theta$ (see \cite{Lions}).  Since  $ H^{2}(\Omega) \cap H^1_0(\Omega)$ is dense in  $H^1_0(\Omega)$, we deduce that $[\mathscr{N}_0(\Omega)]^\bot$ is also dense in $H^{-1}(\Omega)$. Applying this property with $X = [\mathscr{N}_0(\Omega)]^\bot$  and $Y =  H^{-1}(\Omega)$, we deduce the density of $[\mathscr{N}_0(\Omega)]^\bot$ in $ M_\theta(\Omega)$. Given now $f\in M_\theta(\Omega)$, there exists a sequence $f_j \in [\mathscr{N}_0(\Omega)]^\bot$  such that $f_j \rightarrow f$ in $M_\theta(\Omega)$. Setting $u_j = Sf_j \in  H^2(\Omega)\cap H^{1}_0(\Omega)$,  we know that 
$$
\Vert Sf_j \Vert_{H^{2-\theta}(\Omega)}\leq  C\Vert f_j \Vert_{M_\theta(\Omega)}.
$$ 
Consequently $u_j \rightarrow u$ in $ H^{2-\theta}(\Omega)$ with $\Delta u = f$ in $\Omega$ and $u = 0$ on $\Gamma$. So the operator \eqref{opS} is surjective and then is an isomorphism.
\end{proof}

\begin{remark}\upshape The reader's attention is drawn here to the interpolation argument used above. The fact that  the operators  \eqref{DeltaIsoH10H-1a} and \eqref{DeltaIsoH3/2H1/2ortx} are isomorphisms does not necessarily mean that the interpolated operator is as well. However, the invertibility holds in our case thanks to the density of $H^2(\Omega)\cap H^{1}_{0}(\Omega) $ in $H^{1}_{0}(\Omega)$.
We can find some counterexamples in \cite{FJ} with invertible operator on $ L^p(\mathbb{R})$ and $ L^q(\mathbb{R})$ but not on $ L^r(\mathbb{R})$ for some $r$ between $p$ and $q$. 
\end{remark}
The difficulty now lies in determining the interpolated space $M_\theta(\Omega)$. However the optimal regularity  in the case of polygonal or polyhedral domains is in fact better (see Remark \ref{rem4}, Point i)  below) and can be expressed as follows:

\begin{theorem} [{\bf $H^s$- Regularity I}]\label{theoHsregul}  Let $\Omega$ be a polygonal domain of $\mathbb{R}^2$  or a polyhedral domain of $\mathbb{R}^3$ that we assume non convex. We denote by $\omega^\star$ the measure of the largest interior angle of $\Omega$ and we set $\alpha^\star = \pi/\omega^\star$. Then\\  
i) for  any $\theta \in \, ] 1 - \alpha^\star, 1[$ with $ \theta \neq 1/2$, the operator
\begin{equation}\label{DeltaIsoH2-thetaH-thetaterz}
 \Delta : H^{2-\theta}(\Omega) \cap H^1_0(\Omega)\longrightarrow {H}^{-\theta}(\Omega)
\end{equation}
is an isomorphism and\smallskip

ii) for  $ \theta = 1/2$, the operator
\begin{equation}\label{DeltaIsoH2-thetaH-1demibisz}
 \Delta : H^{3/2}_0(\Omega)\longrightarrow  [{H}^{1/2}_{00}(\Omega)]'
\end{equation}
is also an isomorphism (see Proposition \ref{corGrisvardPol} for more information). \smallskip

iii) We have the following characterizations:
\begin{equation}\label{carainterb}
  M_{\theta}(\Omega) =  \begin{cases} H^{-\theta}(\Omega) \quad \textrm{ if }\; \theta \in \, ] 1 - \alpha^\star,\;  1[\quad \mathrm{with}\;  \; \; \theta \neq 1/2\\
 [H^{1/2}_{00}(\Omega)]' \quad\textrm{ if }\; \theta = 1/2,
 \end{cases}
 \end{equation}
 with equivalent norms.
\end{theorem}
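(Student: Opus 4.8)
The plan is to combine the abstract interpolation-of-subspaces machinery (Theorem~\ref{thmACK}, Asekritova--Cobos--Kruglyak) with the explicit knowledge of the kernels $\mathscr{N}_{-\theta}(\Omega)$ obtained in Subsection~\ref{ssCharkernels}, in particular formula~\eqref{dimang} and Remark~\ref{remNoyau}~iii). The key observation is that the two endpoint operators in Corollary~\ref{firstcor}, namely $\Delta:H^2(\Omega)\cap H^1_0(\Omega)\to[\mathscr{N}_0(\Omega)]^\bot$ and $\Delta:H^1_0(\Omega)\to H^{-1}(\Omega)$, have ranges which differ by a \emph{finite-codimensional} closed subspace: the range $[\mathscr{N}_0(\Omega)]^\bot$ of the first operator has finite codimension $n=\dim\mathscr{N}_0(\Omega)=\mathrm{Card}\{j;\omega_j>\pi\}$ inside $L^2(\Omega)=$ range of the second at level $0$. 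Hence the scale $M_\theta(\Omega)=[[\mathscr{N}_0(\Omega)]^\bot,H^{-1}(\Omega)]_\theta$ is exactly of the type handled by Theorem~\ref{thmACK} with $X_0=L^2(\Omega)$, $X_1=H^{-1}(\Omega)$, $Y_0=[\mathscr{N}_0(\Omega)]^\bot$. (One must check the hypotheses: $X_0\cap X_1$ dense in each, which holds since $\mathscr{D}(\Omega)$ is dense in $L^2$ and in $H^{-1}$, and $Y_0$ closed of finite codimension $n$ in $X_0$, which is clear.)

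First I would apply Theorem~\ref{thmACK} to get $2n$ critical indices $0\le\sigma_{0j}\le\sigma_{1j}\le1$ and the dichotomy: $M_\theta(\Omega)$ is closed in $[L^2(\Omega),H^{-1}(\Omega)]_\theta$ — which equals $H^{-\theta}(\Omega)$ for $\theta\neq1/2$ and $[H^{1/2}_{00}(\Omega)]'$ for $\theta=1/2$ — if and only if $\theta\notin\bigcup_j[\sigma_{0j},\sigma_{1j}]$, and then its codimension in that space is $k=\mathrm{Card}\{j;\theta<\sigma_{0j}\}$. Next I would \emph{identify} this codimension with $\dim\mathscr{N}_{-\theta}(\Omega)$: by the Fredholm/closed-range argument already sketched in the proof of Corollary~\ref{firstcor} (relation~\eqref{MKer}), whenever $M_\theta(\Omega)$ is closed it coincides with $[\mathscr{N}_{-\theta}(\Omega)]^\bot$, which has codimension $\dim\mathscr{N}_{-\theta}(\Omega)$ in $H^{-\theta}(\Omega)$ (resp. in $[H^{1/2}_{00}]'$). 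Comparing with formula~\eqref{dimang}, $\dim\mathscr{N}_{-\theta}(\Omega)=\mathrm{Card}\{j;\omega_j>\pi\}=n$ for all $0<\theta\le1/2$ and $=0$ for $\theta\ge1/2$; combined with the fact that the critical intervals must reproduce this step-function behaviour of $k(\theta)$, this forces $\sigma_{0j}=\sigma_{1j}=1/2-$(appropriate shift); more precisely, the only way to get $k(\theta)=n$ on $]0,1/2[$, a jump at $\theta=1/2$, and $k=0$ beyond, consistent with the known critical value $1/2$ for the $(H^1,L^2,H^1_0)$ triple recalled after Theorem~\ref{thmIK}, is $\sigma_{0j}=\sigma_{1j}=\sigma_j$ for suitable $\sigma_j$, with $\min_j\sigma_j$ and $\max_j\sigma_j$ pinned down by Remark~\ref{remNoyau}~iii) to give the threshold $1-\alpha^\star$: for $\theta>1-\alpha^\star$ all kernels $\mathscr{N}_{-\theta}$ vanish, hence $k(\theta)=0$ and $M_\theta(\Omega)=H^{-\theta}(\Omega)$ with equivalent norms (and $=[H^{1/2}_{00}]'$ at $\theta=1/2$). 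This yields~\eqref{carainterb}, and then~\eqref{DeltaIsoH2-thetaH-thetaterz} and~\eqref{DeltaIsoH2-thetaH-1demibisz} follow immediately by substituting~\eqref{carainterb} into the isomorphism~\eqref{DeltaIsoH2-thetaH-thetab} of Corollary~\ref{firstcor}.

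The main obstacle I anticipate is the rigorous matching of the abstract indices $\sigma_{0j},\sigma_{1j}$ with the arithmetic thresholds $1-\alpha_j$ coming from the corner singularities $(r^{-\alpha_j}-r^{\alpha_j})\sin(\alpha_j\theta)$. Theorem~\ref{thmACK} only guarantees \emph{existence} of such indices; to get the sharp value $1-\alpha^\star$ one must feed in the explicit singular functions — showing that each singularity $r^{-\alpha_j}$ lies in the Besov interpolation space $[H^1(B),L^2(B)]_{\alpha_j,\infty}$ but not in the stronger real-interpolation space at the endpoint, which is exactly the mechanism by which the codimension-one defect associated to the $j$-th corner ``switches on'' precisely at $\theta=1-\alpha_j$. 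This is where the 2D polygonal structure is genuinely used; the 3D polyhedral case, where the edge singularities give $\dim\mathscr{N}_s=+\infty$ once some $\alpha_{jk}<1-s$ (Remark~\ref{remNoyau}, last paragraph), requires the same local analysis along edges but now the finite-codimension hypothesis of Theorem~\ref{thmACK} fails globally, so one works locally near each edge and patches — I would expect the paper to either restrict the sharp statement to 2D or to invoke a localized version. A secondary, more routine point is verifying that the real-interpolation defect computed via Theorem~\ref{thmACK} (an $l^\infty$-type condition) coincides with the complex-interpolation space $M_\theta(\Omega)$ used throughout; here one uses that for this Hilbert-space Sobolev scale the complex and real ($2$-exponent) methods agree, together with the density argument already invoked in Corollary~\ref{firstcor}.
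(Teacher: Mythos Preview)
Your approach inverts the paper's logic. The paper's proof of this theorem is essentially two lines: points i) and ii) are \emph{cited} from the literature (Bacuta--Bramble--Xu \cite{BBX} and Dauge \cite{D}), and iii) is then obtained by comparing the isomorphism \eqref{DeltaIsoH2-thetaH-thetabbis} of Corollary~\ref{firstcor} (onto $M_\theta(\Omega)$) with the cited isomorphisms onto $H^{-\theta}(\Omega)$, resp.\ $[H^{1/2}_{00}(\Omega)]'$. In other words, the paper does not prove i), ii) here; it imports them, and deduces the interpolation identity \eqref{carainterb} as a corollary.

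Your plan is to go the other way: compute $M_\theta(\Omega)$ directly via Theorem~\ref{thmACK} and then read off i), ii). This is precisely the machinery the paper deploys \emph{later}, in Theorems~\ref{regpolthetasm} and~\ref{regpolthetasm+}, but crucially \emph{with Theorem~\ref{theoHsregul} already in hand}. Look at the proof of Theorem~\ref{regpolthetasm}: the upper bound $\sigma_1\le 1-\alpha^\star$ is obtained by invoking Theorem~\ref{theoHsregul}, and only then is the singular-function analysis used to force $\sigma_0=1-\alpha^\star$. Your matching-of-codimensions argument via \eqref{MKer} tells you $\mathrm{codim}\,M_\theta=\dim\mathscr{N}_{-\theta}(\Omega)$ only for \emph{non-critical} $\theta$; it gives no a priori control on the length of the critical intervals $[\sigma_{0j},\sigma_{1j}]$, so you cannot conclude that all of $]1-\alpha^\star,1[$ is non-critical without an independent regularity input --- which is exactly what i), ii) provide. (Your computation of $\dim\mathscr{N}_{-\theta}$ is also off: it is not constant on $]0,1/2[$ but drops stepwise at each $1-\alpha_j$, cf.\ Remark~\ref{remNoyau}~iii) and Theorem~\ref{regpolthetasm+}~iii).) Finally, in the 3D polyhedral case the kernel has infinite dimension and Theorem~\ref{thmACK} is inapplicable, as you note; the paper sidesteps this entirely by citing \cite{BBX,D} for both dimensions.
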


\begin{proof}  For Point i) and Point ii), see for instance \cite{BBX} and also \cite{D} Theorem 18.13.\smallskip

Using \eqref{DeltaIsoH2-thetaH-thetaterz} and \eqref{DeltaIsoH2-thetaH-1demibisz}, by identification we deduce from \eqref{DeltaIsoH2-thetaH-thetabbis} the  characterization \eqref{carainterb}.
\end{proof}

\begin{remark} \label{rem4}\upshape i) Note that since $\Omega$ is assumed non convex, then $0 < 1 -\alpha^\star < 1/2$.  
When $\alpha^\star$ is close to 1, the domain $\Omega$ is close to be convex and we are near the $H^2$-regularity. Conversely, when $\alpha^\star$ is near 1/2, the  domain $\Omega$ is close to a cracked domain and the expected regularity in this case is better than $ H^{3/2}$. That means that for any nonconvex polygonal  or  polyhedral domain, there exists $\varepsilon = \varepsilon(\omega^\star)\in \, ]0, 1/2[$ depending on  $\omega^\star$ (in fact, $\varepsilon(\omega^\star) = \alpha^\star - 1/2$) such that for any $ 0 < s < \varepsilon$ and any $f \in H^{-\frac{1}{2} + s}(\Omega)$ the  $H^1_0(\Omega)$ solution of Problem $(\mathscr{L}_D^0)$ belongs to $ H^{3/2 + s}(\Omega)$.\smallskip

ii) A natural question to ask concerns the limit case $ \theta = 1 - \alpha^\star$, where the regularity $H^{1 + \alpha^\star}(\Omega)$ is in general not achieved for 
RHS $f$ in $H^{-1 + \alpha^\star}(\Omega)$. However it is attained if we suppose $f\in \bigcap_{s > \alpha^\star}H^{-1 + s}(\Omega)$  (see \cite{BBX}). \smallskip

iii) What happens if $0< \theta < 1 - \alpha^\star$. This question will be examined  a little further on (see Theorem \ref{regpolthetasm}).
\end{remark}

The following proposition is important because it specifies the dependence of one of the constants involved in the equivalence given in Theorem \ref{theoHsregul} Point iii).

\begin{e-proposition}\label{corGrisvardPol}   Let $\Omega$ be a polygonal domain of $\mathbb{R}^2$  or a polyhedral domain of $\mathbb{R}^3$. Then 
\begin{equation*}\label{inegPolyH3demi}
\forall v \in H^{2-\theta}(\Omega)\cap H^1_0(\Omega),  \quad \Vert v \Vert_{H^{2-\theta}(\Omega)} \leq C(\Omega) \Vert \Delta v \Vert_{M_{\theta}(\Omega)},
\end{equation*}
where the contant $C(\Omega)$ above depends only on $\theta$,  on the Poincar\'e constant and the Lipschitz character  of  $\Omega$.
\end{e-proposition}

\begin{proof} Let us begin by introducing the following operators: for any $i, j = 1, 2$, we set 
$$
K_{ij} = \frac{\partial^2}{\partial  x_i\partial x_j}\quad \mathrm{and}\quad   L_{ij} = K_{ij} \Delta^{-1},
$$ 
where $\Delta^{-1}$ denotes the inverse of Laplacian as in the proof of Corollary \ref{firstcor}. Using  \eqref{inegH2DeltaL2H1}  we get for any $i, j = 1$ or $2$ the following
\begin{equation}\label{inegLijgrdade2L2}
\forall f\in [\mathscr{N}_0(\Omega)]^\bot,\quad  \Vert  L_{ij}f \Vert_{L^2(\Omega)}\leq \Vert  f \Vert_{L^2(\Omega)}.
\end{equation} 
We know that
\begin{equation}\label{ineggrad2H-1}
\forall v \in {H^{1}_0(\Omega)}, \quad  \Vert \nabla^2 v \Vert_{H^{-1}((\Omega)} \leq \Vert \nabla v \Vert_{L^{2}((\Omega)} \leq  C_P(\Omega)\Vert \Delta v \Vert_{H^{-1}(\Omega)}.
\end{equation} 
From inequality \eqref{ineggrad2H-1} and  isomorphism \eqref{DeltaIsoH10H-1a} we have also
\begin{equation}\label{inegLijgrdade2H-1}
\forall f\in H^{-1}(\Omega),\quad  \Vert  L_{ij}f \Vert_{H^{-1}(\Omega)}\leq  C_P(\Omega)\Vert  f \Vert_{H^{-1}(\Omega)}.
\end{equation} 
Recall that if $T$ is a linear and continuous operator from $E_0$ into $F_0$ and  from $E_1$ into $F_1$, where $E_j$ and $F_j$ are Banach spaces, then for any $0 < \theta < 1$ the linear operator $T$ is  also continuous  from $E_\theta = [E_0, E_1]_\theta $ into  $F_\theta = [F_0, F_1]_\theta $  and we have the following interpolation inequality: for any $v\in E_\theta$, 
\begin{equation}\label{inegabstinter}
\Vert Tv \Vert_{F_{\theta}} \leq \Vert T \Vert_{\mathscr{L}(E_0; F_0)}^{1 - \theta}  \Vert  T \Vert_{\mathscr{L}(E_1; F_1)}^{\theta}\Vert v \Vert_{E_{\theta}},
\end{equation} 
(see Adams \cite{Ada} page 222, Berg-Lofstr$\mathrm{\ddot{o}}$m \cite{BL} Theorem 4.1.2 and Triebel \cite{Tri} Remark 3 page 63).\medskip

We deduce from \eqref{inegabstinter}, \eqref{inegLijgrdade2H-1}, \eqref{inegLijgrdade2L2}, \eqref{carainterb} and \eqref{DeltaIsoH2-thetaH-thetabbis} the following inequalities: for any $f\in M_\theta(\Omega)$ 
\begin{equation*}\label{inegLijgrdade2H-1int}
 \Vert  L_{ij}f \Vert_{H^{-\theta}(\Omega)}\leq  C_P(\Omega)\Vert  f \Vert_{M_\theta(\Omega)}, \; \mathrm{if}\; \theta \neq 1/2\quad \mathrm{and}\quad    \Vert  L_{ij}f \Vert_{[H^{1/2}_{00}(\Omega)]'}\leq  C_P(\Omega)\Vert  f \Vert_{M_{1/2}(\Omega)}\;   \mathrm{if}\; \theta = 1/2.
\end{equation*}
Therefore,  we have  the following estimate: for any $v \in H^{2-\theta}(\Omega)\cap H^1_0(\Omega),$
\begin{equation}\label{inegPolyH3demiba}
\Vert \frac{\partial^2 v}{\partial  x_i\partial x_j} \Vert_{H^{-\theta}(\Omega)} \leq C_P(\Omega) \Vert \Delta v \Vert_{M_{\theta}(\Omega)},  \; \mathrm{if}\; \theta \neq 1/2,\; \Vert \frac{\partial^2 v}{\partial  x_i\partial x_j} \Vert_{[H^{1/2}_{00}(\Omega)]'} \leq C_P(\Omega) \Vert \Delta v \Vert_{M_{1/2}(\Omega)}\;   \mathrm{if}\; \theta = 1/2.
\end{equation}
Moreover, using  Corollary 3.4 in \cite{AM} and \eqref{inegPolyH3demiba} we get for any $v \in H^{2-\theta}(\Omega)\cap H^1_0(\Omega),$
\begin{equation*}
 \Vert  v \Vert_{H^{2-\theta}(\Omega)} \leq C_{P, L} (\Omega)\Vert \nabla^2 v \Vert_{H^{-\theta}(\Omega)} \leq C_{P, L} (\Omega)\Vert \Delta v \Vert_{M_{\theta}(\Omega)} \quad \mathrm{if}\; \theta \neq 1/2
\end{equation*}
and where we replace $H^{2-\theta}(\Omega)$ by $H^{3/2}(\Omega)$ and $H^{-\theta}(\Omega)$ by $[H^{1/2}_{00}(\Omega)]'$ if $\theta = 1/2$. That concludes the proof.\end{proof}

We are now in a position to extend Theorem \ref{theoHsregul}  to the case where $0< \theta \leq 1 - \alpha^\star$. 
We begin by considering the case more simple  where the domain $\Omega$ is a polygon with only one angle having a measure $\omega^\star$ larger than $\pi$ and of vertex $A$. We know from \eqref{dimang} that the kernel $ \mathscr{N}_{-\theta}(\Omega) $ is of dimension $1$, say $\mathscr{N}_{-\theta}(\Omega) = \langle z\rangle $. 

\begin{theorem} [{\bf $H^s$- Regularity II}] \label{regpolthetasm} Let $\Omega$ be a polygon with only one angle having a measure $\omega^\star$ larger than $\pi$. Then,\\
i) for  any $0 \leq \theta < 1 - \alpha^\star, $ the operator
\begin{equation}\label{DeltaIsoH2-thetaH-thetabis}
 \Delta : H^{2- \theta}(\Omega) \cap H^1_0(\Omega)\longrightarrow \langle z \rangle^\bot
\end{equation}
is an isomorphism, where $
 \langle z \rangle^\bot = \{\varphi \in H^{-\theta}(\Omega); \; \langle \varphi, z\rangle = 0\},$\smallskip

ii) for $\theta = 1 - \alpha^\star, $ the operator
\begin{equation*}\label{CC}
 \Delta : H^{1 + \alpha^\star}(\Omega) \cap H^1_0(\Omega)\longrightarrow M_{1 - \alpha^\star}(\Omega)
\end{equation*}
is an isomorphism. Moreover  
$$
\bigcap_{r < 1 - \alpha^\star}H^{-r}(\Omega) \hookrightarrow M_{1 - \alpha^\star}(\Omega)\hookrightarrow H^{-1 + \alpha^\star}(\Omega),
$$
where the topology of $ M_{1 - \alpha^\star}(\Omega)$ is finer than that of $ H^{-1 + \alpha^\star}(\Omega)$.
\end{theorem}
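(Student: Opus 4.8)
The plan is to build on the fractional regularity machinery already available. For Point~i), fix $0\le\theta<1-\alpha^\star$ and recall from Corollary~\ref{firstcor} that $\Delta:H^{2-\theta}(\Omega)\cap H^1_0(\Omega)\longrightarrow M_\theta(\Omega)$ is an isomorphism onto the interpolated space $M_\theta(\Omega)=[(\mathscr N_0(\Omega))^\bot,H^{-1}(\Omega)]_\theta$. By \eqref{MKer}, the closure of $M_\theta(\Omega)$ in $H^{-\theta}(\Omega)$ is $[\mathscr N_{-\theta}(\Omega)]^\bot$, and by \eqref{dimang} the kernel $\mathscr N_{-\theta}(\Omega)$ has dimension one in the present single-nonconvex-angle situation, say $\mathscr N_{-\theta}(\Omega)=\langle z\rangle$. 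So the content of Point~i) is the equality $M_\theta(\Omega)=\langle z\rangle^\bot$ \emph{with equivalent norms}, i.e. that the interpolation of subspaces does not create a strictly finer topology here. This is exactly the regime governed by the Ivanov--Kalton theorem (Theorem~\ref{thmIK}), or more precisely its finite-codimension refinement (Theorem~\ref{thmACK}) applied with $n=1$: the relevant Banach couple is $(X_0,X_1)=(L^2(\Omega),H^{-1}(\Omega))$ wait --- rather $(X_0,X_1)$ with $X_0=H^{-1}(\Omega)$ reparametrised so that $(\mathscr N_0(\Omega))^\bot$ sits as a finite-codimension closed subspace; one reads off the critical indices $\sigma_{0j},\sigma_{1j}$ from the explicit singular functions $z(r,\theta)=(r^{-\alpha^\star}-r^{\alpha^\star})\sin(\alpha^\star\theta)$, whose membership $|x|^{-\alpha^\star}\in[H^1(B),L^2(B)]_{\alpha^\star,\infty}$ (noted in Subsection~\ref{ssCharkernels}) pins the critical value at $\theta=1-\alpha^\star$. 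For $\theta<1-\alpha^\star$ we are below the critical interval, so Theorem~\ref{thmACK} gives $Y_\theta$ closed of codimension one in $X_\theta$, hence $M_\theta(\Omega)$ closed of codimension one in $H^{-\theta}(\Omega)$; combined with \eqref{MKer} this forces $M_\theta(\Omega)=\langle z\rangle^\bot$ with equivalent norms. Feeding this back through \eqref{DeltaIsoH2-thetaH-thetabbis} yields the isomorphism \eqref{DeltaIsoH2-thetaH-thetabis}. As an alternative, independent of interpolation-of-subspaces technology, one can argue directly: given $f\in\langle z\rangle^\bot\subset H^{-\theta}(\Omega)$, solve $\Delta u_0=f$ in $H^1_0(\Omega)$, then correct by a suitable multiple of the singular function using the known asymptotic decomposition near $A$ (Grisvard's singular-expansion theory), the compatibility $\langle f,z\rangle=0$ being precisely what makes the coefficient of the non-$H^{2-\theta}$ singularity vanish.

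For Point~ii) at the critical value $\theta=1-\alpha^\star$, the isomorphism $\Delta:H^{1+\alpha^\star}(\Omega)\cap H^1_0(\Omega)\longrightarrow M_{1-\alpha^\star}(\Omega)$ is again just Corollary~\ref{firstcor} specialised to this $\theta$ (note $1-\alpha^\star\ne1/2$ since $\alpha^\star<1$ strictly means $1-\alpha^\star<1/2$), so nothing new is needed there. The two embeddings require identifying $M_{1-\alpha^\star}(\Omega)$ more precisely. The embedding $M_{1-\alpha^\star}(\Omega)\hookrightarrow H^{-1+\alpha^\star}(\Omega)$ is the general statement already recorded in Corollary~\ref{firstcor} ($M_\theta(\Omega)\hookrightarrow H^{-\theta}(\Omega)$ for $\theta\ne1/2$), and the strictness/finer-topology claim is the $\theta=\sigma_0$ endpoint case of Theorem~\ref{thmIK}/\ref{thmACK}, where by assertion~ii) of that theorem the norm of $Y_\theta$ is genuinely not equivalent to that of $X_\theta$. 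For the lower embedding $\bigcap_{r<1-\alpha^\star}H^{-r}(\Omega)\hookrightarrow M_{1-\alpha^\star}(\Omega)$, the idea is a reiteration/limiting argument: for every $r<1-\alpha^\star$ Point~i) gives $M_r(\Omega)=\langle z_r\rangle^\bot=H^{-r}(\Omega)\cap\langle z\rangle^\bot$; since the singular function $z$ pairs continuously against $\bigcap_{r<1-\alpha^\star}H^{-r}(\Omega)$ (because $z\in\bigcup_{t>1-\alpha^\star}H^{t}$ near the vertex, $z\in H^{t}$ for $t<1-\alpha^\star$ only — one must check the pairing makes sense, which it does since the \emph{orthogonality} constraint $\langle f,z\rangle=0$ is stable under the relevant duality), any $f$ lying in all the $H^{-r}(\Omega)$, $r<1-\alpha^\star$, and satisfying $\langle f,z\rangle=0$ is mapped by $S=\Delta^{-1}$ into $\bigcap_{r<1-\alpha^\star}(H^{2-r}(\Omega)\cap H^1_0(\Omega))$; bootstrapping with Proposition~\ref{corGrisvardPol}'s uniform estimates lets one pass to the limit $r\uparrow1-\alpha^\star$ and land in $H^{1+\alpha^\star}(\Omega)\cap H^1_0(\Omega)$, i.e. $f\in M_{1-\alpha^\star}(\Omega)$. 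Remark~\ref{rem4}~ii) already flags precisely this phenomenon ($H^{1+\alpha^\star}$-regularity holds for $f\in\bigcap_{s>\alpha^\star}H^{-1+s}(\Omega)$), so the embedding is really a restatement of that fact in terms of $M_{1-\alpha^\star}(\Omega)$.

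The main obstacle, as I see it, is Point~ii), and specifically justifying that $M_{1-\alpha^\star}(\Omega)$ is \emph{strictly} between the two spaces with a \emph{strictly finer} topology rather than merely continuously embedded. The two-sided \emph{continuous} embeddings are soft consequences of interpolation and of Theorem~\ref{thmHsregul}/Corollary~\ref{firstcor}; what takes work is exhibiting an element (or a sequence) showing the $M_{1-\alpha^\star}$-norm is not controlled by the $H^{-1+\alpha^\star}$-norm — concretely, constructing $f_n\in M_{1-\alpha^\star}(\Omega)$ bounded in $H^{-1+\alpha^\star}(\Omega)$ whose $M_{1-\alpha^\star}$-norms blow up, equivalently solutions $u_n=Sf_n$ bounded in all $H^{2-r}(\Omega)$, $r<1-\alpha^\star$, but with $\|u_n\|_{H^{1+\alpha^\star}(\Omega)}\to\infty$. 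Natural candidates are smoothed truncations of the dual singular function $r^{-\alpha^\star}\sin(\alpha^\star\theta)$ localised near $A$; checking that these do the job reduces to an explicit Mellin-transform / polar-coordinate computation of the type used by Grisvard, but it must be done carefully because the critical exponent sits exactly on the borderline of $H^{1+\alpha^\star}$. A second, more technical, point is making rigorous the ``$\langle f,z\rangle=0$'' pairing uniformly as $\theta\to1-\alpha^\star$: one should fix once and for all a realisation of $z$ and of the duality bracket (using that $z\in\bigcap_{t<1-\alpha^\star}H^{t}_0$-type space against which $f\in\bigcup_{r<1-\alpha^\star}H^{-r}$ pairs) so that the orthogonality conditions defining $M_\theta(\Omega)$ for different $\theta$ are genuinely compatible and pass to the limit. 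Everything else — the isomorphism statements, the upper embedding, the application of Theorems~\ref{thmIK} and \ref{thmACK} — is, I expect, routine given the apparatus already assembled in Sections~\ref{rap} and \ref{poly}.
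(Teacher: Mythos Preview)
Your approach is essentially the same as the paper's: both use Corollary~\ref{firstcor} to get the isomorphism onto $M_\theta(\Omega)$, then invoke the Ivanov--Kalton theorem (Theorem~\ref{thmIK}) with $Y_0=(\mathscr N_0(\Omega))^\bot\subset X_0=L^2(\Omega)$, $X_1=H^{-1}(\Omega)$, and identify the critical indices $\sigma_0=\sigma_1=1-\alpha^\star$ from the Besov-space membership of the singular function $z$ (noted in Subsection~\ref{ssCharkernels}), together with Theorem~\ref{theoHsregul} which forces $\sigma_1\le 1-\alpha^\star$.

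One remark: you are overworking Point~ii). The strict-finer-topology claim does \emph{not} require you to exhibit an explicit blowing-up sequence or to do a Mellin computation. Once $\sigma_0=\sigma_1=1-\alpha^\star$ is established, Theorem~\ref{thmIK}~ii) directly asserts that at $\theta=\sigma_0$ the norm of $Y_\theta=M_{1-\alpha^\star}(\Omega)$ is not equivalent to that of $X_\theta=H^{-1+\alpha^\star}(\Omega)$; that is the entire content of the ``finer topology'' clause, and the paper simply cites it. Likewise, the lower embedding $\bigcap_{r<1-\alpha^\star}H^{-r}(\Omega)\hookrightarrow M_{1-\alpha^\star}(\Omega)$ is treated tersely in the paper (effectively via the reference already flagged in Remark~\ref{rem4}~ii)); your limiting argument through Point~i) is a reasonable way to make it explicit, but you need not worry about fixing a uniform realisation of the pairing $\langle f,z\rangle$ across all $\theta$ --- the orthogonality is only used at each fixed $\theta<1-\alpha^\star$, and the regularity $z\in H^{t}(\Omega)$ for all $t<1-\alpha^\star$ makes the bracket well-defined against any $f\in H^{-r}(\Omega)$ with $r<1-\alpha^\star$.
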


\begin{proof} First, let's remember that for any $0 < \theta < 1$ we have the  isomorphism \eqref{DeltaIsoH2-thetaH-thetabbis}.

Our goal is to prove that  $\sigma_0 = \sigma_1= 1 - \alpha^\star$, with the same notations as in Theorem \ref{thmIK}. The above results will then be a consequence of Theorem \ref{theoHsregul}, Corollary \ref{firstcor} and the characterization of Kernel $\mathscr{N}_{-\theta}(\Omega)$, which is of dimension 1 for all $0\leq \theta <  1 - \alpha^\star$ and reduced to $\{0\}$ when  $ \theta >  1 - \alpha^\star$.\medskip

First, thanks to Theorem \ref{theoHsregul},  we have $\sigma_1 \leq  1 - \alpha^\star$. Second, using Point ii) of Theorem \ref{theoGrisvardPol} and Point ii) of Theorem  \ref{thmIK} we deduce that $\sigma_0 > 0$. In fact, the value of $\sigma_0 > 0$ is directly related to the function $z$ of the kernel $\mathscr{N}_{-\theta}(\Omega)$, when $0\leq \theta <  1 - \alpha^\star$, which belongs to the Besov space $[H^1(\Omega), L^2(\Omega)]_{\alpha^{\star}, \infty}$, but not to $H^{1 - \alpha^\star}(\Omega)$. So the only possible value of  $\sigma_0$ is $ 1 - \alpha^\star$ and then $\sigma_0 = \sigma_1= 1 - \alpha^\star$.  Using then the isomorphism \eqref {DeltaIsoH2-thetaH-thetab}, the identity \eqref{MKer} and  Theorem \ref{thmIK}, we conclude that:
$$
M_\theta(\Omega) = \langle z \rangle^\bot\quad \mathrm{for\, any}\quad 0 \leq \theta < 1 - \alpha^\star.
$$
Moreover the norm of $M_\theta(\Omega)$ is equivalent to the norm of $H^{-\theta}(\Omega)$, where the corresponding constants involved in this equivalence depend on $ \alpha^\star$.\medskip

Now, concerning the critical value $1 - \alpha^\star$ and according to Theorem \ref{thmIK}, Point ii), the norm of the space $M_{1 - \alpha^\star}(\Omega)$ is not equivalent to the norm of  $ H^{-1 + \alpha^\star}(\Omega)$. So all the properties stated in the theorem above take place.\end{proof}

\begin{remark}\upshape \noindent  i) In \cite{BBX}, Theorem 4.1,  the authors proved that for any $f$ belonging to some subspace of the Besov space $[H^1(\Omega), L^2(\Omega)]_{\alpha^{\star}, \infty}$, where $\Omega$ satisfies the assumptions of the above theorem, the $ H^1_0(\Omega)$ function $u$ satisfying  $\Delta u = f$ in $\Omega$ is in fact in the Besov space $[H^2(\Omega), H^1(\Omega)]_{\alpha^{\star}, \infty}$ which contains the space $H^{1 + \alpha^\star}(\Omega) \cap H^1_0(\Omega)$. That means that our result in Point ii) above is little bit better.
\smallskip

\noindent ii) For $f$ given in ${H}^{-s}(\Omega)$ with $0 \leq s < 1 - \alpha^\star$, let $u$ be the solution $H^1_0(\Omega)$ of the problem $(\mathscr{L}_D^0)$.  In fact $u\in  H^{3/2}_0(\Omega)$ and we know that $u$ is more regular outside a neighborhood $V$ of the vertex $A$: precizely $u \in H^{2- s}(\Omega\setminus V) $. It is convenient to introduce polar coordinates $(r, \theta)$ centered at $A$ such that the two sides of the angle correspond to $\theta = 0$ and $\theta = \omega^\star$. Let us introduce the following function:
$$
S(r, \theta) = \eta(r) r^{\alpha^\star} sin(\alpha^\star \theta) 
$$
where $\alpha^\star = \pi/\omega^\star $ and $\eta$ is a regular cut-off function defined on $\mathbb{R}^+$, equal to 1 near zero, equal to zero in some interval $ [a, \infty[$, with some small $a > 0$. Observe that $S \in H^t(\Omega)$ for any $t < 1+ \alpha^\star$ and  $\Delta S \in H^t(\Omega)$ for any $t <  \alpha^\star$. The function $w = u - \lambda S$, with $\lambda$ constant to be determined later, satisfies $\Delta w  \in {H}^{-s}(\Omega)$.  If $\langle f, z \rangle = 0$, we deduce from \eqref{DeltaIsoH2-thetaH-thetabis} that $u\in H^{2-s}(\Omega)$. If  $\langle f, z \rangle \neq 0$, we choose $\lambda =  \langle \Delta S, z \rangle/\langle f, z \rangle$. So $\langle \Delta w, z \rangle = 0$ and then  $w\in H^{2-s}(\Omega)$. That means that $u - \lambda S$ belongs to  $H^{2-s}(\Omega)$.\smallskip

\noindent iii) In \cite{J-K}, the authors recall that for any $s > 3/2$ there is a Lipschitz domain $\Omega$ and $f\in \mathscr{C}^\infty(\overline{\Omega})$ such that the solution $u$ to the inhomogeneous Dirichlet problem $(\mathscr{L}_D)$  does not belong to $H^{s}(\Omega)$. Point i) above shows that the compatibility condition is the hypothesis on $f$ that allows us to obtain the expected regularity $H^{s}(\Omega)$.

\end{remark}
We will  now consider  the case  where the domain $\Omega$ is a polygon having  $n$ angles  $\omega_1, \ldots, \omega_n$, with $n\geq 2$, larger than $\pi$. We suppose that $\omega_1 \leq \ldots \leq \omega_n$. We know from \eqref{eqdim} that the kernel $ \mathscr{N}_{-\theta}(\Omega) $ is of dimension $n$, say $\mathscr{N}_{-\theta}(\Omega) = \langle z_1, \ldots, z_n\rangle $.

\begin{theorem} [{\bf $H^s$- Regularity III}] \label{regpolthetasm+}  Let $\Omega$ be a polygon.  We denote by  $\omega_1, \ldots, \omega_n$, with $n\geq 2$, all angles larger than $\pi$ and suppose $\omega_1 \leq \ldots \leq \omega_n$.  Setting  $\alpha_k = \pi/\omega_k$ for $k = 1, \ldots, n$ and by convention $\alpha_0 = 1$, then\\
i) for  any $\theta \in \, ] 1 - \alpha_n, 1[$ with $ \theta \neq 1/2$, the operator $
 \Delta : H^{2-\theta}(\Omega) \cap H^1_0(\Omega)\longrightarrow {H}^{-\theta}(\Omega)$ is an isomorphism,\\
ii) for  $ \theta = 1/2$, the operator $\Delta : H^{3/2}_0(\Omega)\longrightarrow  [{H}^{1/2}_{00}(\Omega)]'$ is an isomorphism,\\
iii) for any fixed $k = 0, \ldots, n-1$, $\theta \in \, ] 1 - \alpha_k, 1 - \alpha_{k+1}[$ and $f\in H^{-\theta}(\Omega)$ satisfying the following compatibility condition
\begin{equation*}
\forall \varphi \in \langle z_{k+1}, \ldots, z_{n}\rangle, \quad \langle f, \varphi\rangle = 0,
\end{equation*}
Problem  $(\mathscr{L}_D^0)$ has a unique solution $u \in H^{2-\theta}(\Omega)$.
\end{theorem}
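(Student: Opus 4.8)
The plan is to reduce everything to the single identification
$$
M_\theta(\Omega) \;=\; [\mathscr{N}_{-\theta}(\Omega)]^\bot \qquad \textrm{with equivalent norms, for every } \theta \in \, ]0,1[ \,\setminus\,\{1-\alpha_1,\ldots,1-\alpha_n\},
$$
where $M_\theta(\Omega) = [(\mathscr{N}_0(\Omega))^\bot, H^{-1}(\Omega)]_\theta$ as in Corollary \ref{firstcor}. Granting this, Points i) and ii) are Theorem \ref{theoHsregul} with $\omega^\star = \omega_n$ (so $\alpha^\star = \alpha_n$), since for $\theta \in \, ]1-\alpha_n,1[$ one has $\mathscr{N}_{-\theta}(\Omega) = \{0\}$, whence $M_\theta(\Omega) = H^{-\theta}(\Omega)$ for $\theta \neq 1/2$ and $M_{1/2}(\Omega) = [H^{1/2}_{00}(\Omega)]'$ (note $1/2 \in \, ]1-\alpha_n,1[$ because $\alpha_n > 1/2$ for a polygon). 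For Point iii), if $\theta \in \, ]1-\alpha_k,1-\alpha_{k+1}[$ then $\mathscr{N}_{-\theta}(\Omega) = \langle z_{k+1},\ldots,z_n\rangle$ by \eqref{eqdim} — the functions $z_l$ with $l \leq k$ failing to lie in $H^\theta(\Omega)$ since then $1-\alpha_l \leq 1-\alpha_k < \theta$ — so that $[\mathscr{N}_{-\theta}(\Omega)]^\bot$ is exactly the subspace of $f \in H^{-\theta}(\Omega)$ meeting the stated compatibility condition, and the isomorphism \eqref{DeltaIsoH2-thetaH-thetabbis} provides the unique $u \in H^{2-\theta}(\Omega)\cap H^1_0(\Omega)$ with $\Delta u = f$ (uniqueness also being a direct consequence of Theorem \ref{unicityH1demi}).

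\medskip
To prove the identification I would imitate the proof of Theorem \ref{regpolthetasm}, using the Asekritova--Cobos--Kruglyak theorem instead of Ivanov--Kalton. Apply Theorem \ref{thmACK} to the couple $X_0 = L^2(\Omega)$, $X_1 = H^{-1}(\Omega)$, so that $X_\theta = H^{-\theta}(\Omega)$ (with $X_{1/2} = [H^{1/2}_{00}(\Omega)]'$), and to $Y_0 = [\mathscr{N}_0(\Omega)]^\bot$, a closed subspace of $X_0$ of finite codimension $n = \dim\mathscr{N}_0(\Omega)$. Then $Y_\theta = [Y_0,X_1]_\theta = M_\theta(\Omega)$, and Theorem \ref{thmACK} yields $2n$ indices $0 \leq \sigma_{0j} \leq \sigma_{1j} \leq 1$, $j=1,\ldots,n$, such that $M_\theta(\Omega)$ is closed in $H^{-\theta}(\Omega)$ if and only if $\theta \notin \bigcup_{j=1}^n[\sigma_{0j},\sigma_{1j}]$, with codimension $|\{j : \theta < \sigma_{0j}\}|$ in that case.

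\medskip
The crux — and the step I expect to be the main obstacle — is to show that $\bigcup_{j=1}^n[\sigma_{0j},\sigma_{1j}] = \{1-\alpha_1,\ldots,1-\alpha_n\}$ with, after relabelling, $\sigma_{0j} = \sigma_{1j} = 1-\alpha_j$. For the inclusion $\bigcup_j[\sigma_{0j},\sigma_{1j}] \subseteq \{1-\alpha_l\}_l$ one localizes at the re-entrant vertices: writing $\mathscr{N}_0(\Omega)$ as the direct sum of the lines spanned by the (essentially disjointly supported) dual singular functions at the vertices of angle greater than $\pi$, the interpolation data of the codimension-$n$ subspace $Y_0$ splits into the single-codimension data attached to each such vertex, which by Theorem \ref{regpolthetasm} (the model one-angle case) reduces to the point $1-\alpha_j$; here the decisive input is that the generator $z_j$ belongs to the Besov space $[H^1(\Omega),L^2(\Omega)]_{\alpha_j,\infty}$ but not to $H^{1-\alpha_j}(\Omega)$ (Subsection \ref{ssCharkernels}), which is precisely what collapses the $j$-th interval to a single point. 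Making this localization/decoupling rigorous is the delicate point. Once it is known that $\bigcup_j[\sigma_{0j},\sigma_{1j}]$ is a finite subset of $\{1-\alpha_l\}_l$, its complement in $\, ]0,1[$ is cofinite, and there the Fredholm identity \eqref{MKer} gives $M_\theta(\Omega) = [\mathscr{N}_{-\theta}(\Omega)]^\bot$, of codimension $\dim\mathscr{N}_{-\theta}(\Omega) = |\{l : \theta < 1-\alpha_l\}|$ by \eqref{eqdim} (valid for a real parameter, and by Theorem \ref{unicityH1demi} for $\theta > 1/2$); comparing this with the codimension $|\{j : \theta < \sigma_{0j}\}|$ supplied by Theorem \ref{thmACK} and using that a non-increasing integer step function is determined by its values off a finite set, one obtains $\{\sigma_{0j}\}_j = \{1-\alpha_l\}_l$ as multisets, hence $\sigma_{0j} = \sigma_{1j} = 1-\alpha_j$ after relabelling.

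\medskip
With the indices identified, Theorem \ref{thmACK} shows that $M_\theta(\Omega)$ is closed in $H^{-\theta}(\Omega)$ exactly when $\theta \notin \{1-\alpha_1,\ldots,1-\alpha_n\}$, of codimension $|\{l : \theta < 1-\alpha_l\}| = \dim\mathscr{N}_{-\theta}(\Omega)$, so that by \eqref{MKer} it equals $[\mathscr{N}_{-\theta}(\Omega)]^\bot$ with equivalent norms. This is the identification announced at the outset, and combined with the first paragraph it proves i), ii) and iii).
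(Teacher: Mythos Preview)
Your proposal is correct and follows exactly the route the paper indicates: the authors write that the proof ``is similar to that of Theorem \ref{regpolthetasm} and involves Theorem \ref{thmACK}'' and give no further details, which is precisely your plan of replacing Ivanov--Kalton by Asekritova--Cobos--Kruglyak in the argument for the one-angle case. Your outline in fact supplies more detail than the paper does, including the codimension-matching device for pinning down the critical indices $\sigma_{0j}=\sigma_{1j}=1-\alpha_j$; the ``delicate'' localization step you flag is the same one implicitly absorbed by the paper's appeal to the single-vertex model and the Besov regularity of the dual singular functions.
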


We do not provide proof of this theorem. It is similar to that of Theorem \ref{regpolthetasm}  and involves Theorem \ref{thmACK}.

\begin{remark}\upshape i) For critical values $\theta = 1 - \alpha_k $, with $k = 1, \ldots, n$, the operator $
 \Delta : H^{1 + \alpha_k}(\Omega) \cap H^1_0(\Omega)\longrightarrow M_{1 - \alpha_k}(\Omega)$ 
is an isomorphism. Moreover  
$$
\bigcap_{r < 1 - \alpha_k}H^{-r}(\Omega) \hookrightarrow M_{1 - \alpha_k}(\Omega)\hookrightarrow H^{-1 + \alpha_k}(\Omega),$$
where the topology of $ M_{1 - \alpha_k}(\Omega)$ is finer than that of $ H^{-1 + \alpha_k}(\Omega)$.\smallskip

\noindent{ii)} For $\theta \in \, ] 1 - \alpha_n, 1[$ we have the following estimate: there exists a constant $C$ depending on $\alpha_n$ such that
\begin{equation}\label{equiNormPolyn}
\forall  f\in M_{\theta}(\Omega), \quad \Vert f \Vert_{M_{\theta}(\Omega)} \leq C \Vert f \Vert_{H^{-\theta}(\Omega)}.
\end{equation}
And for  $\theta \in \, ] 1 - \alpha_k, 1 - \alpha_{k+1}[$, with  $k = 0, \ldots, n-1$ we have the following estimate: there exists a constant $C$ depending on $\alpha_k$ and $\alpha_{k+1}$ such that inequality \eqref{equiNormPolyn} holds. So, as in the proof of Proposition \ref{corGrisvardPol}, we deduce the following inequalities: if $\theta \in \,  ] 1 - \alpha_n, 1[$, respectively  $\theta \in \, ] 1 - \alpha_k, 1 - \alpha_{k+1}[$, for some  $k = 0, \ldots, n-1$, then
\begin{equation*}
\forall v\in H^{2-\theta}(\Omega) \cap H^1_0(\Omega), \quad \Vert v \Vert_{H^{2-\theta}(\Omega)} \leq C \Vert \Delta v \Vert_{H^{-\theta}(\Omega)}, 
\end{equation*}
where the constant $C$ depends on the Poincar\'e constant of $\Omega$, on  the Lipschitz constant of $\Omega$  and on $ \alpha_n$, respectively on  $\alpha_k$ and $  \alpha_{k+1}$.

\end{remark}

 \section{Area integral estimate. Counterexample}\label{counterexample} 
 
The following proposition shows that  the inequalities \eqref{inegaltraceL2Gamma1} and \eqref{ineg1} cannot be valid in their current form.

\begin{e-proposition} [{\bf Counterexample for Inequalities \eqref{inegaltraceL2Gamma1} and \eqref{ineg1}}]  \label{ConterexampleH1demitrace1b} For any  $\varepsilon > 0$, there exist a Lipschitz domain $\Omega_\varepsilon \subset \mathbb{R}^2$ and a harmonic function $w_\varepsilon \in H^{3/2}(\Omega_\varepsilon)$ (with $\sqrt \varrho_\varepsilon\, \nabla^2 w_\varepsilon  \in L^2(\Omega_\varepsilon))$ such that the following family 
$$
(\Vert \varrho_\varepsilon \nabla^2 w_\varepsilon \Vert_{L^{2}(\Omega_\varepsilon)} + \Vert w_\varepsilon \Vert_{H^{3/2}(\Omega_\varepsilon)})_\varepsilon,
$$
is bounded with respect $\varepsilon$ and
\begin{equation*}
\Vert w_\varepsilon \Vert_{H^1(\Gamma_\varepsilon)} \rightarrow \infty \quad as \; \varepsilon\rightarrow  0.
\end{equation*}

\end{e-proposition}

\begin{proof} Recall first that if $u\in L^2(\Omega) $ is  harmonic, then (see Section 3 in \cite{AM} or Theorem 4.2 in \cite{J-K})
$$
u\in H^{1/2}(\Omega) \Longleftrightarrow \sqrt \varrho \, \nabla u \in L^2(\Omega)\; \mathrm{and}\; u\in H^{3/2}(\Omega) \Longleftrightarrow \sqrt \varrho \, \nabla^2 u \in L^2(\Omega).
$$

\medskip

\noindent{\bf Step 1.}  We suppose now that  $\Omega= \,  ]0, 1/2[\,  \times\,  ]0, 1/2[ $ and for any $\varepsilon > 0$ and close to 0, we define, as in \cite{J-K}, the following open set:
$$
\Omega_\varepsilon = \{(x, y) \in \mathbb{R}^2; \;  0 < x < 1/2\;\; \mathrm{and}\; \;  \varepsilon \lambda (x/\varepsilon)< y < 1/2\},
$$
where $
\lambda (x) = x\; \mathrm{ for} \;  0\leq x \leq 1,  \quad  \lambda (x) = 2- x \; \mathrm{ for } \;  1\leq x \leq 2 \quad \mathrm{ and } \; \; \lambda(x + 2) =\lambda (x).$ We set
$$
\Gamma_\varepsilon = \{(x, y) \in \mathbb{R}^2; \;  y = \varepsilon \lambda (x/\varepsilon), \; \mathrm{with}\;  0 < x < 1/2\},
$$
which is just a part of the boundary of $\Omega$.\medskip

\noindent{\bf Step 2.} Let us consider the following function (see also Example 2.1 given in Chapter 6 of the book of Ne\v{c}as) which depends only on the variable $y$:
$$
v(x, y) = \int_0^{y} ds\int_{1/2}^s \frac{dt}{t\, \ell n\, t}.
$$
It is easy to verify that $\sqrt{\varrho}\, \nabla^2 v \in \textbf{\textit L}^2(\Omega)$ where $\varrho$  is the distance to the boundary of $\Omega$. So we have also $v\in H^{3/2}(\Omega)$. Moreover, 
$$
\partial_\tau v (x, x) = \frac{\sqrt 2}{2} \int_x^{1/2} \frac{dt}{t\, \vert  \ell n\, t\vert} \quad \mathrm{ for } \; \; 0 <  x < \varepsilon,
$$
and 
$$
\partial_\tau v (x, 2\varepsilon - x) = - \frac{\sqrt 2}{2}  \int_{2\varepsilon - x}^{1/2} \frac{dt}{t\, \vert  \ell n\, t\vert} \quad \mathrm{ for } \; \; \varepsilon <  x < 2\varepsilon.
$$
Observe that 
$$
 \int_x^{1/2} \frac{dt}{t\, \vert  \ell n\, t\vert} = \ell n(- \ell n\, x) -  \ell n( \ell n\, 2).
 $$
So we have
\begin{equation*}
\begin{array}{rl}
\displaystyle\int_{\varepsilon}^{2\varepsilon}\left(\int_{2\varepsilon - x}^{1/2}\frac{dt}{t\, \vert  \ell n\, t\vert}\right)^2 dx = & \displaystyle\int_{\varepsilon}^{2\varepsilon} [\ell n(- \ell n\, (2\varepsilon - x)) -  \ell n( \ell n\, 2)]^2 dx\\
= &\displaystyle \int_{0}^{\varepsilon} [\ell n(- \ell n\, x) -  \ell n( \ell n\, 2)]^2 dx.
\end{array}
\end{equation*}
In addition as the function $\partial_\tau v$ is periodic with the period $ 2\varepsilon$ we get
$$
\Vert\partial_\tau v\Vert_{L^2(\Gamma_\varepsilon)}^2 =  (1/4\varepsilon)\int_0^{\varepsilon}\left(\int_x^{1/2}\frac{dt}{t\, \vert  \ell n\, t\vert}\right)^2 dx, 
$$
where we have chosen $\varepsilon = 1/4k$, with $k\in \mathbb{N}^\star$. 
\medskip

\noindent{\bf Step 3.}  We will give an estimate of the integral:
$$
I_\varepsilon = \int_0^{\varepsilon}\left(\int_x^{1/2}\frac{dt}{t\, \vert  \ell n\, t\vert}\right)^2 dx. 
$$
Setting $s =  \ell n \, t$, we get 
$$
\int_x^{1/2}\frac{dt}{t\, \vert  \ell n\, t\vert} = -  \int_{ \ell n\, x}^{-\ell n\, 2}\frac{ds}{s} = \ell n(- \ell n\, x) -  \ell n( \ell n\, 2).
$$
So $I_\varepsilon  = I_{1\varepsilon} - I_{2\varepsilon} + I_{3\varepsilon}$, where
$$
 I_{1\varepsilon} =  \int_0^{\varepsilon}  [\ell n(- \ell n\, x)]^2dx \quad I_{2\varepsilon} = 2 \ell n( \ell n\, 2)\int_0^{\varepsilon} \ell n(- \ell n\, x)dx
 $$
and
$$
 I_{3\varepsilon} =  \int_0^{\varepsilon}  [\ell n( \ell n\, 2)]^2dx.
$$
Since the function $\ell n(- \ell n\, x)$ is nondecreasing in the interval $]0, \varepsilon]$ and  $I_\varepsilon  \geq I_{1\varepsilon} - I_{2\varepsilon} $, we deduce easily the estimate for $\varepsilon$ close to 0:
\begin{equation*}
 I_{\varepsilon} \geq  \frac{\varepsilon}{2} \ell n(-\ell n\, \varepsilon)]^2
\end{equation*}
So, we get
$$
 \frac{1}{2\sqrt 2} [\ell n(-\ell n\, \varepsilon)]  \leq \Vert\partial_\tau v\Vert_{L^2(\Gamma_\varepsilon)}  \longrightarrow \infty \quad \mathrm{as} \quad \varepsilon \rightarrow 0.
$$

\noindent{\bf Step 4.} Now, using Theorem \ref{theoHsregul}  and Proposition \ref{corGrisvardPol} in the polygon $\Omega_\varepsilon$, there exits a unique solution $ H^{3/2}_0(\Omega_\varepsilon)$ satisfying $\Delta u_\varepsilon = \Delta v$ in $\Omega_\varepsilon$ with the estimate 
$$
\Vert   u_\varepsilon  \Vert_{H^{3/2}(\Omega_\varepsilon)}\leq C(\Omega_\varepsilon) \Vert \Delta v \Vert_{M_{1/2}(\Omega_\varepsilon)},
$$ 
where $ C(\Omega_\varepsilon)$ depends only on the Poincar\'e constant and on the Lipschitz character of $\Omega_\varepsilon$, which are both  bounded with respect $\varepsilon$. So we have  $  C(\Omega_\varepsilon) \leq  C(\Omega)$. Clearly the sequence $(\Vert \Delta v \Vert_{M_{1/2}(\Omega_\varepsilon)})_\varepsilon$, where we recall that $k = 1/(4\varepsilon)$, is increasing when $\varepsilon$ tends to zero. And since $\Omega$ is convex, then 
$$
M_{1/2}(\Omega) =  [L^2(\Omega), H^{-1}(\Omega)]_{1/2} = [H^{1/2}_{00}(\Omega)]',
$$
and $\lim_{\varepsilon\rightarrow 0} \Vert \Delta v \Vert_{M_{1/2}(\Omega_\varepsilon)} = \Vert \Delta v \Vert_{[H^{1/2}_{00}(\Omega)]'}.$ We then deduce that 
$$
\Vert   u_\varepsilon  \Vert_{H^{3/2}(\Omega_\varepsilon)}\leq C(\Omega) \Vert \Delta v \Vert_{[H^{1/2}_{00}(\Omega)]'}\leq C(\Omega) \Vert  v \Vert_{H^{3/2}(\Omega)}.
$$ 
So the harmonic function in $\Omega_\varepsilon$ defined by $w_\varepsilon = v - u_\varepsilon $ belongs to $H^{3/2}(\Omega_\varepsilon)$ and satisfies 
$$
 \Vert   w_\varepsilon  \Vert_{H^{3/2}(\Omega_\varepsilon)}\leq C(\Omega) \Vert  v \Vert_{H^{3/2}(\Omega)}.
 $$

Then, as $u_\varepsilon  = 0$ on $\partial\Omega_\varepsilon$, we have $\partial_\tau w_\varepsilon = \partial_\tau v$ on $\Gamma_\varepsilon$ and $w_\varepsilon$ would verify the following equality: $
\Vert\partial_\tau w_\varepsilon\Vert_{L^2(\Gamma_\varepsilon)} = \Vert\partial_\tau v\Vert_{L^2(\Gamma_\varepsilon)},$ which tends to infinity as $\varepsilon$ tends to zero.\end{proof} \medskip

Thus, there can be no estimate for harmonic function $u$, on the norm $\Vert u \Vert_{H^1(\Gamma)}$ by the norm  $\Vert u \Vert_{ H^{3/2}(\Omega)}$  dependent only on the Lipschitz constant of the considered  domain $\Omega$. More precisely, the following inequality
\begin{equation*}
\Vert u \Vert_{H^1(\Gamma)} \leq  C(\Omega) \Vert u \Vert_{ H^{3/2}(\Omega)}
\end{equation*}
cannot be satisfied in general, unlike when the domain is of class $\mathscr{C}^{1, 1}$, as we will see in the next section. As explained in the introduction, this implies that the following  inequality  
\begin{equation*}
\Vert u \Vert_{L^2(\Gamma)} \leq  C(\Omega) \Vert u \Vert_{H^{1/2}(\Omega)}
\end{equation*}
cannot be satisfied in general for harmonic functions $H^{1/2}(\Omega)$ when the domain is only Lipschitz and  the same applies to inequalities  \eqref{inegaltraceL2Gamma1} and \eqref{ineg1}.

\section{Traces in the limit cases $H^{1/2}(\Omega)$ and $H^{3/2}(\Omega)$} \label{traces}
The questions of traces of functions belonging to Sobolev spaces are fundamental in the study of boundary value problems. Classically, for bounded Lipschitz domains of $\mathbb{R}^N$, we know that for any $1/2< s < 3/2$ the linear mapping 
\begin{equation}\label{trlim}
\gamma_0:  u \longrightarrow u_{\vert \Gamma}\\
\end{equation}
belongs to $\mathscr{L}(H^s(\Omega); H^{s-1/2}(\Gamma))$, but this continuity property is wrong for $s = 1/2$ or $s = 3/2$. Moreover if $u\in H^s(\Omega)$  for $s > 3/2$ (resp. $u\in H^{3/2}(\Omega)$), then $u_{\vert \Gamma}\in H^1(\Gamma)$ (resp. $u_{\vert \Gamma}\in H^{1-\varepsilon}(\Gamma)$ for any $0 < \varepsilon \leq 2$). We will investigate in this section the crucial limit cases $s = 1/2$,  $s = 3/2$. And if $u\in H^{1/2}(\Omega)$ (resp. $u\in H^{3/2}(\Omega))$, what additional condition can be added to $u$ so that its trace belongs to $L^2(\Gamma)$ (resp. to $H^1(\Gamma)$). Recall that when $\Omega$ is of class $\mathscr{C}^{k-1, 1}$, with positive integer $k$, then the above mapping \eqref{trlim} is continuous for any  $1/2< s < 1/2 + k$ (see \cite{Wend}). In particular if $\Omega$ is of class $\mathscr{C}^{1, 1}$ and $u\in H^{3/2}(\Omega)$, then its trace belongs to $H^1(\Gamma)$ (see also \cite{McL}, Theorem 3.7). \medskip

Now, concerning the normal derivative $\partial_{\textit{\textbf n}}u$ of $u$, we recall that if $\Omega$ is a bounded Lipschitz domain and $u\in H^{3/2}(\Omega)$, the normal derivative is in general not defined (even if $\Omega$ is more regular). However, when $u\in H^{s}(\Omega)$, with $s >  3/2$, then $\partial_{\textit{\textbf n}}u \in L^2(\Gamma)$ only. In the case where $\Omega$ is $\mathscr{C}^{1, 1}$  and $u \in H^s(\Omega)$, with $3/2 < s < 5/2$, then $\partial_{\textit{\textbf n}}u\in H^{s-3/2}(\Gamma)$.\medskip

Recall also that for any  $v\in H^{s}(\Omega)$, with $0 < s < 1$, we have $\nabla v \in \textit{\textbf H}\, ^{s-1}(\Omega) $ if $s \neq 1/2$ and $\nabla v \in [\textit{\textbf H}\, ^{1/2}_{00}(\Omega)]' $ if $s = 1/2$, this last dual space being bigger that the dual space $[\textit{\textbf H}\, ^{1/2}(\Omega)]' $ (see Theorem 1.4.4.6 and Proposition 1.4.4.8 in \cite{Gri}). Note that $\mathscr{D}(\Omega)$ is dense  in $H^{1/2}(\Omega)$ and in $H^{1/2}_{00}(\Omega)$ respectively. So the dual spaces $[H^{1/2}(\Omega)]'$ and  $[H^{1/2}_{00}(\Omega)]'$ are both subspaces of  $\mathscr{D}'(\Omega)$. Moreover, since  $H^{1/2}_{00}(\Omega)\hookrightarrow H^{1/2}(\Omega)$, with density, we have the continuous embedding $  [H^{1/2}(\Omega)]' \hookrightarrow [H^{1/2}_{00}(\Omega)]' $.

\begin{lemma}\label{DensityDOmegabarH1/2GradH1/2'}
The space $\mathscr{D}(\overline\Omega)$ is dense in the following space:
\begin{equation*}\label{defEnablaOmega} 
 E(\nabla;\, \Omega)\ =\ \left\{\,v\in H^{1/2}(\Omega);\ \nabla v\in  [\textit{\textbf H}^{\, 1/2}(\Omega)]'\,\right\}.
\end{equation*}
\end{lemma}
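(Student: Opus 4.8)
The plan is to prove the density by a Hahn--Banach (duality) argument: it suffices to show that every $L\in E(\nabla;\,\Omega)'$ which vanishes on $\mathscr{D}(\overline\Omega)$ vanishes identically. Endow $E(\nabla;\,\Omega)$ with the graph norm $\Vert v\Vert = \Vert v\Vert_{H^{1/2}(\Omega)}+\Vert\nabla v\Vert_{[\mathbf{H}^{1/2}(\Omega)]'}$. Since $\nabla$ is a closed operator, the map $v\mapsto(v,\nabla v)$ realizes $E(\nabla;\,\Omega)$ as a closed subspace of the Hilbert space $H^{1/2}(\Omega)\times[\mathbf{H}^{1/2}(\Omega)]'$; hence, by Hahn--Banach, any such $L$ is represented by a pair $(g,\mathbf{\Phi})$,
\begin{equation*}
L(v)=\langle g,v\rangle_{[H^{1/2}(\Omega)]',\,H^{1/2}(\Omega)}+\langle\mathbf{\Phi},\nabla v\rangle_{\mathbf{H}^{1/2}(\Omega),\,[\mathbf{H}^{1/2}(\Omega)]'},
\end{equation*}
with $g\in[H^{1/2}(\Omega)]'$ and $\mathbf{\Phi}\in\mathbf{H}^{1/2}(\Omega)$ (the latter because $[\mathbf{H}^{1/2}(\Omega)]'$ is reflexive, with bidual $\mathbf{H}^{1/2}(\Omega)$).

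Next I would extract information on $(g,\mathbf{\Phi})$ from the vanishing of $L$ on smooth functions. Testing against $\varphi\in\mathscr{D}(\Omega)$ and integrating by parts in the second term gives $g=\operatorname{div}\mathbf{\Phi}$ in $\mathscr{D}'(\Omega)$; in particular $\operatorname{div}\mathbf{\Phi}\in[H^{1/2}(\Omega)]'$. Testing now against all $\phi\in\mathscr{D}(\overline\Omega)$, and observing that $\phi\mapsto\langle g,\phi\rangle+\int_\Omega\mathbf{\Phi}\cdot\nabla\phi$ is continuous on $H^1(\Omega)$ (since $g\in[H^{1/2}(\Omega)]'$ and $\mathbf{\Phi}\in\mathbf{L}^2(\Omega)$) while $\mathscr{D}(\overline\Omega)$ is dense in $H^1(\Omega)$, one obtains
\begin{equation*}
\langle g,\phi\rangle+\int_\Omega\mathbf{\Phi}\cdot\nabla\phi=0\qquad\textrm{for every }\phi\in H^1(\Omega),
\end{equation*}
which is precisely the weak statement that $\mathbf{\Phi}$ has vanishing normal trace on $\Gamma$.

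The concluding step is to approximate $\mathbf{\Phi}$ by vector fields $\mathbf{w}_k\in\mathscr{D}(\Omega)$ such that $\mathbf{w}_k\to\mathbf{\Phi}$ in $\mathbf{H}^{1/2}(\Omega)$ \emph{and} $\operatorname{div}\mathbf{w}_k\to\operatorname{div}\mathbf{\Phi}$ in $[H^{1/2}(\Omega)]'$. Granting this, for any $v\in E(\nabla;\,\Omega)$ the integration by parts $\langle\operatorname{div}\mathbf{w}_k,v\rangle+\langle\nabla v,\mathbf{w}_k\rangle=0$ (valid because $\mathbf{w}_k\in\mathscr{D}(\Omega)$) passes to the limit --- the first pairing against $v\in H^{1/2}(\Omega)$ by convergence of the divergences in $[H^{1/2}(\Omega)]'$, the second against $\nabla v\in[\mathbf{H}^{1/2}(\Omega)]'$ by convergence in $\mathbf{H}^{1/2}(\Omega)$ --- and yields $\langle g,v\rangle+\langle\mathbf{\Phi},\nabla v\rangle=L(v)=0$. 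Hence $L\equiv 0$, and $\mathscr{D}(\overline\Omega)$ is dense in $E(\nabla;\,\Omega)$.

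The main obstacle is exactly this approximation of $\mathbf{\Phi}$. The point is that the divergence of a generic field in $\mathbf{H}^{1/2}(\Omega)$ lies only in the larger space $[\mathbf{H}^{1/2}_{00}(\Omega)]'$, so the density of $\mathscr{D}(\Omega)$ in $\mathbf{H}^{1/2}(\Omega)$ alone does \emph{not} control $\operatorname{div}\mathbf{w}_k-\operatorname{div}\mathbf{\Phi}$ in the finer norm of $[H^{1/2}(\Omega)]'$; the vanishing normal trace must be used to recover it. I would handle this by the standard localization scheme: a partition of unity subordinate to boundary charts (multiplication by the cut-offs being continuous both on $E(\nabla;\,\Omega)$ and on the space of fields under consideration) reduces matters to the case where $\Omega$ is the hypograph of a Lipschitz function; there one translates $\mathbf{\Phi}$ in the direction transverse to the graph so that its support is pushed strictly into $\Omega$ --- the zero normal trace being precisely what forces these translates to converge back to $\mathbf{\Phi}$ both in $\mathbf{H}^{1/2}(\Omega)$ and in divergence --- and finally mollifies on a small enough scale, exactly as in the classical proof that $\mathscr{D}(\overline\Omega)$ is dense in $\mathbf{H}(\operatorname{div};\,\Omega)$.
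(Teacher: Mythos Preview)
Your duality strategy is sound up to the point where you represent $L$ by $(g,\mathbf{\Phi})$, identify $g=\operatorname{div}\mathbf{\Phi}$ in $\mathscr{D}'(\Omega)$, and read off the weak vanishing of the normal trace of $\mathbf{\Phi}$. The genuine gap is the final approximation: you need $\mathbf{w}_k\in\mathscr{D}(\Omega)^N$ with $\mathbf{w}_k\to\mathbf{\Phi}$ in $\mathbf{H}^{1/2}(\Omega)$ \emph{and} $\operatorname{div}\mathbf{w}_k\to g$ in $[H^{1/2}(\Omega)]'$, and the analogy with $H_0(\operatorname{div};\Omega)$ does not carry over. In the classical argument the zero normal trace is exactly what makes the extension of $\mathbf{\Phi}$ by zero across $\Gamma$ remain in $H(\operatorname{div};\mathbb{R}^N)$; translation inward and mollification then cost nothing. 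Here the extension by zero of a field in $\mathbf{H}^{1/2}(\Omega)$ lands in $\mathbf{H}^{1/2}(\mathbb{R}^N)$ only if the field already lies in $\mathbf{H}^{1/2}_{00}(\Omega)$, and the tangential components of $\mathbf{\Phi}$ carry no boundary condition whatsoever. Your inward translation therefore creates a jump across the translated boundary and destroys the $H^{1/2}$ regularity; mollification cannot repair this. Nor can you fall back on the plain density of $\mathscr{D}(\Omega)$ in $\mathbf{H}^{1/2}(\Omega)$, since then $\operatorname{div}\mathbf{w}_k$ converges only in the coarser space $[H^{1/2}_{00}(\Omega)]'$, which is not enough to pair against a general $v\in H^{1/2}(\Omega)$. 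In effect the duality step has transferred, not removed, the difficulty: the approximation you defer to the end is at least as delicate as the original density statement, and your sketch does not address the $H^{1/2}$--specific obstruction.

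The paper takes the opposite, direct route. After localizing to the half-space it approximates $v\in E(\nabla;\Omega)$ itself: first a cutoff to compact support, then an \emph{outward} translation built from a bounded extension operator $P:H^{1/2}(\mathbb{R}^N_+)\to H^{1/2}(\mathbb{R}^N)$ (so one never extends by zero), and finally mollification. The outward direction is the crucial choice: it is compatible with an $H^{1/2}$ extension and produces functions smooth up to the boundary, which is precisely what is needed for $\mathscr{D}(\overline{\Omega})$ rather than $\mathscr{D}(\Omega)$.
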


\begin{proof} The proof of the density of  $\mathscr{D}(\overline\Omega)$ in $E(\nabla;\, \Omega)$ is similar to that of $\mathscr{D}(\overline\Omega)$ in $H^{1}(\Omega)$, but little bit more complicated. It suffices to consider the case where $\Omega = \mathbb{R}^N_+$ is the half space. 
\medskip

\noindent{\bf Step 1}. {\it We will prove that the functions of $E(\nabla;\, \Omega)$ with compact support is dense in $E(\nabla;\, \Omega)$.} Let $\psi \in \mathscr{D}(\mathbb{R}^N)$, with 
\begin{equation*} 
\psi(x) = \begin{cases} 1 \quad \; \mathrm{if}\;\;\vert x \vert  \leq 5/4\\
0 \quad \; \mathrm{if}\;\;\vert x \vert  \geq 7/4
\end{cases}
\end{equation*}
and define 
$$
\mathrm{for\, \, any} \, k\in \mathbb{N}^*,\quad \psi_k (x) = \psi(x/k).
$$
For $v\in  H^{1/2}(\mathbb{R}^N_+)$, setting $v_k = \psi_k v$, we can prove by some direct calculations the following estimate:
\begin{equation}\label{estimH1/2H1/2primedemiespace}
\Vert v_k - v \Vert_{H^{1/2}(\mathbb{R}^N_+)}\leq C\big( \Vert v\Vert_{H^{1/2}(\mathbb{R}^N_+\cap B_k^c)}  + \frac{1}{\sqrt k}  \Vert v\Vert_{L^{2}(\mathbb{R}^N_+)} \big),
\end{equation}
where $B_k^c$ is the the complementary of $B_k$ in the whole space. Besides for any $\varphi \in \mathscr{D}(\mathbb{R}^N_+)$ and $j = 1, \ldots, N$, we have
\begin{equation*}
\langle \frac{\partial}{\partial x_{j}}( v_k - v),\, \varphi \rangle = \langle \frac{\partial v}{\partial x_{j}}, \, \psi_k \varphi - \varphi \rangle + \int_{\mathbb{R}^N_+}v \varphi \frac{\partial \psi_k}{\partial x_{j}}
\end{equation*}
and then by using \eqref{estimH1/2H1/2primedemiespace}, we get
\begin{equation*}\label{inegbracketH12H1/2prime}
\begin{array}{rl}
 \vert \langle \frac{\partial}{\partial x_{j}}( v_k - v),\, \varphi \rangle \vert & \leq C  \Vert \frac{\partial v}{\partial x_{j}}\Vert_{ [\textit{\textbf H}^{\, 1/2}(\mathbb{R}^N_+)]'} \big( \Vert \varphi_{H^{1/2}(\mathbb{R}^N_+\cap B_k^c)}  + \frac{1}{\sqrt k}  \Vert \varphi\Vert_{L^{2}(\mathbb{R}^N_+)} \big) \\
&  + \; \frac{C}{k} \Vert v \Vert_{L^{2}(\mathbb{R}^N_+)}\Vert \varphi \Vert_{L^{2}(\mathbb{R}^N_+)},
\end{array}
\end{equation*}
Hence, 
$$
\frac{\partial v_k}{\partial x_{j}}  \rightharpoonup \frac{\partial v}{\partial x_{j}} \quad \mathrm{in}\; [H^{\, 1/2}(\mathbb{R}^N_+)]'.
$$
Our goal is to prove the strong convergence. For that, we observe that $\frac{\partial v_k}{\partial x_{j}} = \psi_k \frac{\partial v}{\partial x_{j}} + v\frac{\partial \psi_{k}}{\partial x_{j}}$ and $v\frac{\partial \psi_{k}}{\partial x_{j}} \rightarrow 0 $ in $L^2(\mathbb{R}^N_+)$ and then in $ [H^{\, 1/2}(\mathbb{R}^N_+)]'$. In addition, since for any $\varphi \in H^{1/2}(\mathbb{R}^N_+)$
$$
 \vert \langle \psi_k \frac{\partial v}{\partial x_{j}} ,\, \varphi \rangle\vert = \vert \langle \frac{\partial v}{\partial x_{j}} ,\, \psi_k\varphi \rangle \vert  \leq \Vert  \frac{\partial v}{\partial x_{j}} \Vert_{[H^{\, 1/2}(\mathbb{R}^N_+)]'} \Vert \psi_k \varphi\Vert_{H^{\, 1/2}(\mathbb{R}^N_+)}
$$
we have the following estimate
$$
\Vert \psi_k \frac{\partial v}{\partial x_{j}} \Vert_{[H^{\, 1/2}(\mathbb{R}^N_+)]'} \leq \Vert  \frac{\partial v}{\partial x_{j}} \Vert_{[H^{\, 1/2}(\mathbb{R}^N_+)]'} \sup_{\varphi\in H^{1/2}(\mathbb{R}^N_+), \varphi \not= 0}\frac{\Vert \psi_k \varphi\Vert_{H^{\, 1/2}(\mathbb{R}^N_+)}}{\Vert  \varphi\Vert_{H^{\, 1/2}(\mathbb{R}^N_+)}}.
$$
As 
$$
\limsup_{k\rightarrow \infty}\Vert \psi_k \frac{\partial v}{\partial x_{j}} \Vert_{[H^{\, 1/2}(\mathbb{R}^N_+)]'} \leq \Vert \frac{\partial v}{\partial x_{j}} \Vert_{[H^{\, 1/2}(\mathbb{R}^N_+)]'},
$$
we have also the same inequality for the norm of $\frac{\partial v_k}{\partial x_{j}}$ and then we deduce the desired strong convergence.\medskip

\noindent{\bf Step 2}.  {\it Extension to} $ \mathbb{R}^N$. It follows from Step 1 that we can suppose, without loss of generality, that $v\in H^{1/2}(\mathbb{R}^N_+)$ with compact support.

For $h > 0$  we set $\tau_h v(\textit{\textbf x})  = v_h (\textit{\textbf x}) = v(\textit{\textbf x}', x_N + h)$ and we introduce the following function
\begin{equation*}
\alpha_h(\textit{\textbf x}) = \begin{cases} 1 \quad \; \mathrm{if}\;\; x_N > 0\\
0 \quad \; \mathrm{if}\;\; x_N < - h
\end{cases}
\end{equation*}
with $\alpha_h \in \mathscr{C}^1(\mathbb{R}^N)$. We set $w_h = \alpha_h \tau_h Pu$, where $P :  H^{1/2}(\mathbb{R}^N_+) \rightarrow H^{1/2}(\mathbb{R}^N)$ is a bounded linear extension operator. Clearly,  if $v \in H^{1/2}(\mathbb{R}^N_+)$, using Lebesgue's dominated convergence theorem, then we have $  v_h \rightarrow v$ in $H^{1/2}(\mathbb{R}^N_+)$ and $  w_h{_{\vert \mathbb{R}^N_+}} \rightarrow v$ in $H^{1/2}(\mathbb{R}^N_+)$  as  $h  \rightarrow 0$. Moreover, for any $\varphi \in \mathscr{D}(\mathbb{R}^N_+)$ and $j = 1, \ldots, N$,
\begin{equation*}
\vert \langle \frac{\partial w_h}{\partial x_{j}} ,\, \varphi \rangle \vert = \vert \langle \frac{\partial u}{\partial x_{j}},\,   \tau_{-h} \varphi\rangle \vert  \leq \Vert \frac{\partial u}{\partial x_{j}}\Vert_{[H^{\, 1/2}(\mathbb{R}^N_+)]'} \Vert \varphi \Vert_{H^{\, 1/2}(\mathbb{R}^N_+)}.
\end{equation*}
Using the density of $\mathscr{D}(\mathbb{R}^N_+)$ in $H^{\, 1/2}(\mathbb{R}^N_+)]$, we deduce that 
\begin{equation*}
\frac{\partial w_h}{\partial x_{j}} \in [H^{\, 1/2}(\mathbb{R}^N_+)]'\quad \mathrm{and}\quad \Vert \frac{\partial w_h}{\partial x_{j}}\Vert_{[H^{\, 1/2}(\mathbb{R}^N_+)]'} \leq  \Vert \frac{\partial u}{\partial x_{j}}\Vert_{[H^{1/2}(\mathbb{R}^N_+)]'}
\end{equation*}
(where the last inequality can be obtained by interpolation between $L^2(\mathbb{R}^N_+)$ and $H^{-1}(\mathbb{R}^N_+)$).
Besides, for any $\varphi \in \mathscr{D}(\mathbb{R}^N_+)$ and $j = 1, \ldots, N$, we have even if it means extending $\varphi$ by zero outside the half-space,
\begin{equation*}
\vert \langle \frac{\partial w_h}{\partial x_{j}} -  \frac{\partial u}{\partial x_{j}} ,\, \varphi \rangle \vert = \vert\langle \frac{\partial u}{\partial x_{j}},\,   \tau_{-h}\varphi - \varphi \rangle \vert  \leq  \Vert \frac{\partial u}{\partial x_{j}}\Vert_{[H^{1/2}(\mathbb{R}^N_+)]'} \Vert   \tau_{-h}\varphi - \varphi \Vert_{H^{\, 1/2}(\mathbb{R}^N_+)} 
\end{equation*}
where the last norm above tends to $0$  when $h\rightarrow 0$. That gives the strong convergence  
$$
 \frac{\partial w_h}{\partial x_{j}} \rightarrow \frac{\partial u}{\partial x_{j}} \qquad \mathrm{ in }\; [H^{1/2}(\mathbb{R}^N_+)]'.
$$
\medskip
\noindent{\bf Step 3}.  {\it Regularization}. To finish, we will approximate $w_h$, with $h$ fixed, by the functions $\varphi_k = w_h\star \varrho_k$, where we use the sequence of mollifiers $(\varrho_k)_k$. It is easy to verify that 
$$
\varphi_k \rightarrow w_h \qquad \mathrm{ in }\; H^{1/2}(\mathbb{R}^N)  \qquad \mathrm{ and } \qquad  \frac{\partial \varphi_k}{\partial x_{j}} \rightarrow  \frac{\partial w_h }{\partial x_{j}} \qquad \mathrm{ in }\; [H^{1/2}(\mathbb{R}^N )]'
$$ 
as  $k  \rightarrow \infty$. 
\end{proof}

We saw in the previous section that functions $H^{1/2}(\Omega)$ generally do not have an $L^2(\Gamma)$ trace, even when they are harmonic. We will see a little later that this is the case when the domain $\Omega$ is more regular. The following lemma gives a sufficient condition for $H^{1/2}(\Omega)$ functions to actually have an $L^2(\Gamma)$ trace and allows us to give a new characterization of the space $H^{1/2}_{00}(\Omega)$.

\begin{theorem}[{\bf Trace operator on $E(\nabla;\, \Omega)$}]\label{TracesH1demigradH1demiprime} i) The linear mapping $\gamma_0: u \mapsto u_{\vert \Gamma}$ defined on $\mathscr{D}(\overline{\Omega})$ can be extended by continuity to a linear and continuous mapping, still denoted $\gamma_0$, from $E(\nabla;\, \Omega)$ into $L^2(\Gamma)$. \\
ii) The kernel of  $\gamma_0: u \mapsto u_{\vert \Gamma}$ from $E(\nabla;\, \Omega)$ into $L^2(\Gamma)$ is equal to $H^{1/2}_{00}(\Omega)$.
\end{theorem}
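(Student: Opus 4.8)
The plan is to establish the trace estimate first and then identify the kernel. For Point i), I would start from the classical Green-type identity on smooth functions: for $u\in\mathscr{D}(\overline\Omega)$ one can write, using a fixed vector field $\textit{\textbf h}\in\mathscr{C}^1(\overline\Omega)$ with $\textit{\textbf h}\cdot\textit{\textbf n}\geq c>0$ on $\Gamma$ (such a field exists for Lipschitz $\Omega$),
\begin{equation*}
\int_\Gamma (\textit{\textbf h}\cdot\textit{\textbf n})\,|u|^2\,d\sigma \;=\; \int_\Omega \operatorname{div}(\textit{\textbf h}\,|u|^2)\,dx \;=\; \int_\Omega (\operatorname{div}\textit{\textbf h})\,|u|^2\,dx \;+\; 2\int_\Omega u\,(\textit{\textbf h}\cdot\nabla u)\,dx .
\end{equation*}
The first term on the right is bounded by $C\Vert u\Vert_{L^2(\Omega)}^2\leq C\Vert u\Vert_{H^{1/2}(\Omega)}^2$. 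For the second term, the key point is that $u\,\textit{\textbf h}\in\textit{\textbf H}^{1/2}(\Omega)$ whenever $u\in H^{1/2}(\Omega)$ (multiplication by a $\mathscr{C}^1$ function is bounded on $H^{1/2}$), so the pairing $\int_\Omega u\,(\textit{\textbf h}\cdot\nabla u) = \langle \nabla u,\, u\,\textit{\textbf h}\rangle_{[\textit{\textbf H}^{1/2}(\Omega)]',\,\textit{\textbf H}^{1/2}(\Omega)}$ is controlled by $\Vert\nabla u\Vert_{[\textit{\textbf H}^{1/2}(\Omega)]'}\,\Vert u\,\textit{\textbf h}\Vert_{\textit{\textbf H}^{1/2}(\Omega)}\leq C\Vert\nabla u\Vert_{[\textit{\textbf H}^{1/2}(\Omega)]'}\,\Vert u\Vert_{H^{1/2}(\Omega)}$. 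Combining, $\Vert u\Vert_{L^2(\Gamma)}^2\leq C\Vert u\Vert_{E(\nabla;\,\Omega)}^2$ for all $u\in\mathscr{D}(\overline\Omega)$. Since $\mathscr{D}(\overline\Omega)$ is dense in $E(\nabla;\,\Omega)$ by Lemma \ref{DensityDOmegabarH1/2GradH1/2'}, $\gamma_0$ extends uniquely to a continuous map $E(\nabla;\,\Omega)\to L^2(\Gamma)$.

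For Point ii), I would argue both inclusions. First, $H^{1/2}_{00}(\Omega)\subset\ker\gamma_0$: since $\mathscr{D}(\Omega)$ is dense in $H^{1/2}_{00}(\Omega)$ (for its finer topology) and $H^{1/2}_{00}(\Omega)\hookrightarrow E(\nabla;\,\Omega)$ continuously (this embedding is asserted in the introduction; it follows because for $v\in H^{1/2}_{00}(\Omega)$ one has $v/\sqrt\varrho\in L^2(\Omega)$, hence $\nabla v=\nabla(\sqrt\varrho\cdot v/\sqrt\varrho)$ can be paired against $\textit{\textbf H}^{1/2}(\Omega)$ using a Hardy-type estimate, since $\varphi\in\textit{\textbf H}^{1/2}(\Omega)$ does not vanish on $\Gamma$ but $v$ decays like $\sqrt\varrho$), any $v\in H^{1/2}_{00}(\Omega)$ is an $E(\nabla;\,\Omega)$-limit of functions in $\mathscr{D}(\Omega)$, each of which has zero trace, so $\gamma_0 v=0$ by continuity. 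Conversely, if $u\in E(\nabla;\,\Omega)$ with $\gamma_0 u=0$, I must show $u\in H^{1/2}_{00}(\Omega)$. Since $u\in H^{1/2}(\Omega)=H^{1/2}_0(\Omega)$ already, the extra information needed is exactly the weighted condition $u/\sqrt\varrho\in L^2(\Omega)$, which by the characterization recalled in Section \ref{rap} is equivalent to $u\in H^{1/2}_{00}(\Omega)$; equivalently, that $\widetilde u$ (extension by zero) lies in $H^{1/2}(\mathbb{R}^N)$. I would prove this by testing: for $\varphi\in\mathscr{D}(\mathbb{R}^N)$ and $j=1,\dots,N$, compute $\langle\partial_j\widetilde u,\varphi\rangle = -\int_\Omega u\,\partial_j\varphi = \langle\partial_j u,\varphi|_\Omega\rangle + \int_\Gamma u\,\varphi\,n_j\,d\sigma = \langle\partial_j u,\varphi|_\Omega\rangle$, the boundary term vanishing precisely because $\gamma_0 u=0$. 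Hence $\nabla\widetilde u\in[\textit{\textbf H}^{1/2}(\mathbb{R}^N)]'$ with the right bound, and together with $\widetilde u\in L^2(\mathbb{R}^N)$ and the interpolation characterization $H^{1/2}(\mathbb{R}^N)=[H^1(\mathbb{R}^N),L^2(\mathbb{R}^N)]_{1/2}$, a duality/interpolation argument on $\mathbb{R}^N$ gives $\widetilde u\in H^{1/2}(\mathbb{R}^N)$, i.e. $u\in\widetilde H^{1/2}(\Omega)=H^{1/2}_{00}(\Omega)$.

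The main obstacle I anticipate is the last step of (ii): passing from "$\widetilde u\in L^2$ and $\nabla\widetilde u\in[\textit{\textbf H}^{1/2}(\mathbb{R}^N)]'$" to "$\widetilde u\in H^{1/2}(\mathbb{R}^N)$." On the full space this should follow from Fourier analysis — the hypothesis says $|\xi|\,\widehat{\widetilde u}(\xi)$ lies in the space dual to $\{(1+|\xi|^2)^{1/4}\widehat\psi\}$, which combined with $\widehat{\widetilde u}\in L^2$ pins down $(1+|\xi|^2)^{1/4}\widehat{\widetilde u}\in L^2$ by a real-interpolation (or direct Littlewood–Paley) argument — but making the duality pairing rigorous, and in particular checking that $[\textit{\textbf H}^{1/2}(\mathbb{R}^N)]'$ is the correct intermediate space between $\textit{\textbf L}^2$ and $\textit{\textbf H}^{-1}$ rather than the "wrong" one, requires care; this is exactly the subtlety flagged in the paragraph preceding Lemma \ref{DensityDOmegabarH1/2GradH1/2'} (the distinction between $[H^{1/2}(\Omega)]'$ and $[H^{1/2}_{00}(\Omega)]'$, equivalently Grisvard's Theorem 1.4.4.6). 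A cleaner route, which I would prefer, is to avoid the Fourier argument and instead directly verify the Hardy condition $u/\sqrt\varrho\in L^2(\Omega)$: localize near $\Gamma$, flatten the boundary, and use the one-dimensional Hardy inequality together with the $\gamma_0 u=0$ condition and the control of $\partial_N u$ in $[H^{1/2}]'$ to bound $\int_0^\delta |u(x',t)|^2 t^{-1}\,dt$; then invoke the characterization of $H^{1/2}_{00}(\Omega)$ from Section \ref{rap}. Either way the crux is the same quantitative weighted estimate, and I expect it to be the only genuinely technical part of the proof.
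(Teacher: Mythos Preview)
Your approach is essentially the paper's: Part i) uses exactly the same Rellich-type identity with a transversal vector field $\textit{\textbf h}$ followed by the density Lemma, and Part ii) proceeds by extension by zero and testing the distributional gradient of $\widetilde u$. Two remarks are worth making. First, the integration-by-parts identity $-\int_\Omega u\,\partial_j\varphi = \langle\partial_j u,\varphi|_\Omega\rangle + \int_\Gamma (\gamma_0 u)\,\varphi\,n_j$ is not immediate for $u\in E(\nabla;\,\Omega)$ (you only have $u\in H^{1/2}$); the paper justifies it by approximating $u$ by $u_k\in\mathscr{D}(\overline\Omega)$ via the density Lemma and using Part i) to pass to the limit in the boundary term --- you should not skip this. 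Second, your anticipated obstacle evaporates on $\mathbb{R}^N$: there $[H^{1/2}(\mathbb{R}^N)]'=H^{-1/2}(\mathbb{R}^N)$ with no $H^{1/2}_{00}$ subtlety, and the paper concludes in one line by observing that $\nabla\widetilde u\in\textit{\textbf H}^{-1/2}(\mathbb{R}^N)$ forces $\widetilde u-\Delta\widetilde u\in H^{-3/2}(\mathbb{R}^N)$, whence $\widetilde u\in H^{1/2}(\mathbb{R}^N)$ by the Bessel-potential isomorphism $(I-\Delta)^{-1}:H^{-3/2}\to H^{1/2}$. No Hardy-inequality detour is needed.
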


\begin{proof} i) For any $v\in \mathscr{D}(\overline{\Omega})$ 
\begin{equation*}
 \int_\Gamma \textbf{\textit h}\cdot\textbf{\textit n}\vert v \vert^2 = 2 \int_\Omega v\nabla v \cdot \textbf{\textit h} +  \int_\Omega \vert v \vert^2 \mathrm{div}\,  \textbf{\textit h},
\end{equation*}
where $\textbf{\textit h}\in \mathscr{C}^\infty(\overline{\Omega})$ is such that   $\textbf{\textit h} \cdot\textbf{\textit n}\geq \alpha > 0$ a.e  on $\Gamma$ (see Lemma 1.5.1.9 in \cite{Gri}).
Consequently, we have the following estimate:
\begin{equation*}
 \Vert v \Vert^2_{L^2(\Gamma)}\leq C (\Vert \nabla v \Vert_{ [\textbf{\textit H}\, ^{1/2}(\Omega)]'} \Vert v \Vert_{H^{1/2}(\Omega)} + \Vert v \Vert^2_{L^2(\Omega)} ),
\end{equation*}
which means that
\begin{equation*}
 \Vert v \Vert_{L^2(\Gamma)}\leq C \Vert  v \Vert_{E(\nabla;\, \Omega)},
\end{equation*}
where the space $E(\nabla;\, \Omega)$ is equipped with the graph norm.\medskip

The required property is finally a consequence of the density of $\mathscr{D}(\overline{\Omega})$ in $E(\nabla;\, \Omega)$. \smallskip

\noindent ii) Observe that $H^{1/2}_{00}(\Omega)$ is included in $E(\nabla;\, \Omega)$. So by using the density of $\mathscr{D}(\Omega)$ in $H^{1/2}_{00}(\Omega)$,  we have the following inclusion: $H^{1/2}_{00}(\Omega) \subset Ker \, \gamma_0$.  \medskip

Conversely, let $u \in E(\nabla;\, \Omega)$ with $u = 0$ on $\Gamma$ and $\widetilde{u}$ the extension by $0$ of $u$ outside of $\Omega$. Then for any $j = 1, \ldots, N$ any $\varphi \in \mathscr{D}(\mathbb{R}^N)$ we have
$$ 
\langle  \frac{\partial \widetilde{u}}{\partial x_{j}}, \varphi\rangle =  - \int_{\mathbb{R}^N}  \widetilde{u}\frac{\partial \varphi}{\partial x_{j}}=  - \int_{\Omega}  u\frac{\partial \varphi}{\partial x_{j}}.
$$
Now, using the density of  $\mathscr{D}(\overline{\Omega})$ in $E(\nabla;\, \Omega)$, there exists a sequence $u_k$ in $\mathscr{D}(\overline{\Omega})$ such $u_k \rightarrow u$ in  $E(\nabla;\, \Omega)$. Hence
$$
\int_{\Omega}  u\frac{\partial \varphi}{\partial x_{j}} = \lim_{k\rightarrow \infty}\int_\Omega u_k \frac{\partial \varphi}{\partial x_{j}} = \lim_{k\rightarrow \infty}(- \int_\Omega \varphi \frac{\partial u_k} {\partial x_{j}} + \int_\Gamma u_k \varphi).
$$
Thanks to Point i) we know that $u_{k_{\vert \Gamma}} \rightarrow 0$ in $L^2(\Gamma)$. As $ \frac{\partial u_k} {\partial x_{j}} \rightarrow  \frac{\partial u} {\partial x_{j}}$ in $[H^{1/2}(\Omega)]'$, we deduce from above that  
$$
- \int_{\Omega}  u\frac{\partial \varphi}{\partial x_{j}} = \langle  \frac{\partial u}{\partial x_{j}}, \varphi\rangle_{ [H^{\, 1/2}(\Omega)]'\times H^{\, 1/2}(\Omega)}.
$$
So we have
$$\vert\langle  \frac{\partial \widetilde{u}}{\partial x_{j}}, \varphi\rangle \vert \leq \Vert \frac{\partial u}{\partial x_{j}} \Vert_{[H^{\, 1/2}(\Omega)]'}\Vert \varphi \Vert_{H^{\, 1/2}(\Omega)} \leq  \Vert \frac{\partial u}{\partial x_{j}} \Vert_{[H^{\, 1/2}(\Omega)]'}\Vert \varphi \Vert_{H^{\, 1/2}(\mathbb{R}^N)},
$$
 which means that
$$
 \nabla  \widetilde{u}\in  \textit{\textbf H}^{\, -1/2}(\mathbb{R}^N).
 $$
Since $\widetilde{u} - \Delta \widetilde{u} \in  H^{\, -3/2}(\mathbb{R}^N)$, then $ \widetilde{u}\in  H^{\, 1/2}(\mathbb{R}^N)$ and therefore $u\in H^{1/2}_{00}(\Omega)$. 
\end{proof}

\begin{remark}\label{Surj}\upshape What about the characterization of the range of $E(\nabla;\, \Omega)$ by the linear mapping $\gamma_0: u \mapsto u_{\vert \Gamma}$? Is this range equal or strictly included in $L^2(\Gamma)$? We will answer this question a little later on, see Proposition \ref{protrace}.
\end{remark}

\begin{corollary}\label{TracesbH1demigradH1demiprime} i) The linear mapping $\gamma: u \mapsto (u_{\vert \Gamma},  \frac{\partial u}{\partial\textit{\textbf n}})$ is  continuous from $E(\nabla^2;\, \Omega)$ into $H^1(\Gamma)\times L^2(\Gamma)$, where
\begin{equation*}\label{defEnabla2Omega}
 E(\nabla^2;\, \Omega)\ =\ \left\{\,v\in H^{3/2}(\Omega);\ \nabla ^2v\in  [\textit{\textbf H}^{\, 1/2}(\Omega)]'\,\right\}.
\end{equation*}
ii) The kernel of  $\gamma$ from $E(\nabla^2;\, \Omega)$ into  $H^1(\Gamma)\times L^2(\Gamma)$ is equal to $H^{3/2}_{00}(\Omega)$.
\end{corollary}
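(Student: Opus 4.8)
The plan is to obtain Corollary \ref{TracesbH1demigradH1demiprime} from Theorem \ref{TracesH1demigradH1demiprime} by applying that result to each first-order partial derivative of $u$. First I would note that if $u\in E(\nabla^2;\,\Omega)$, then for every $j=1,\dots,N$ one has $\partial_j u\in H^{1/2}(\Omega)$ — because $u\in H^{3/2}(\Omega)$ and the restriction of an $H^{3/2}(\mathbb{R}^N)$-extension differentiates into an $H^{1/2}$ function — and $\nabla(\partial_j u)\in[\textbf{H}^{\,1/2}(\Omega)]'$, since its entries are among those of $\nabla^2 u$. Hence $\partial_j u\in E(\nabla;\,\Omega)$ with $\|\partial_j u\|_{E(\nabla;\,\Omega)}\le C\,\|u\|_{E(\nabla^2;\,\Omega)}$, and Theorem \ref{TracesH1demigradH1demiprime} i) furnishes a trace $\gamma_0(\partial_j u)\in L^2(\Gamma)$, continuously. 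Thus the full boundary gradient $(\nabla u)_{|\Gamma}=(\gamma_0(\partial_1 u),\dots,\gamma_0(\partial_N u))$ lies in $\textbf{L}^2(\Gamma)$ with $\|(\nabla u)_{|\Gamma}\|_{\textbf{L}^2(\Gamma)}\le C\,\|u\|_{E(\nabla^2;\,\Omega)}$. In parallel I would establish, by the same scheme as Lemma \ref{DensityDOmegabarH1/2GradH1/2'}, that $\mathscr{D}(\overline\Omega)$ is dense in $E(\nabla^2;\,\Omega)$, so that the boundary identities below, checked first for $u\in\mathscr{D}(\overline\Omega)$, pass to the limit.

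For Point i), on $\mathscr{D}(\overline\Omega)$ one has the orthogonal decomposition of the boundary gradient $(\nabla u)_{|\Gamma}=\nabla_\tau(u_{|\Gamma})+\big((\nabla u)_{|\Gamma}\cdot\textbf{n}\big)\,\textbf{n}$, where $\nabla_\tau$ is the tangential (surface) gradient on $\Gamma$; passing to the limit in $E(\nabla^2;\,\Omega)$ it persists for every $u\in E(\nabla^2;\,\Omega)$. It shows immediately that $\partial_\textbf{n}u:=(\nabla u)_{|\Gamma}\cdot\textbf{n}\in L^2(\Gamma)$, with $\|\partial_\textbf{n}u\|_{L^2(\Gamma)}\le C\,\|u\|_{E(\nabla^2;\,\Omega)}$, and that $\nabla_\tau(u_{|\Gamma})\in\textbf{L}^2(\Gamma)$ with the analogous bound. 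Since moreover $u\in H^{3/2}(\Omega)\hookrightarrow H^1(\Omega)$ has a trace $u_{|\Gamma}\in H^{1/2}(\Gamma)\hookrightarrow L^2(\Gamma)$, the characterization of $H^1(\Gamma)$ on a Lipschitz boundary as $\{g\in L^2(\Gamma);\ \nabla_\tau g\in\textbf{L}^2(\Gamma)\}$ yields $u_{|\Gamma}\in H^1(\Gamma)$ with $\|u_{|\Gamma}\|_{H^1(\Gamma)}\le C\,\|u\|_{E(\nabla^2;\,\Omega)}$. Collecting these estimates gives the continuity of $\gamma$ from $E(\nabla^2;\,\Omega)$ into $H^1(\Gamma)\times L^2(\Gamma)$.

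For Point ii), I would argue the two inclusions separately. If $u\in H^{3/2}_{00}(\Omega)$, then $u\in H^{3/2}_0(\Omega)=H^{3/2}(\Omega)\cap H^1_0(\Omega)$ and $\partial_j u/\varrho^{1/2}\in L^2(\Omega)$ for all $j$; since $H^{1/2}(\Omega)=H^{1/2}_0(\Omega)$, each $\partial_j u$ belongs to $H^{1/2}_{00}(\Omega)$, which is contained in $E(\nabla;\,\Omega)$ (as already observed in the proof of Theorem \ref{TracesH1demigradH1demiprime}); hence $u\in E(\nabla^2;\,\Omega)$, and by Theorem \ref{TracesH1demigradH1demiprime} ii) $\gamma_0(\partial_j u)=0$, so $(\nabla u)_{|\Gamma}=0$, while $u\in H^1_0(\Omega)$ forces $u_{|\Gamma}=0$; thus $\gamma u=(0,0)$. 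Conversely, let $u\in E(\nabla^2;\,\Omega)$ with $\gamma u=(0,0)$. From $u_{|\Gamma}=0$ we get $u\in H^1_0(\Omega)$, hence $u\in H^{3/2}_0(\Omega)$; from $u_{|\Gamma}=0$ (so $\nabla_\tau(u_{|\Gamma})=0$) and $\partial_\textbf{n}u=0$, the decomposition above gives $(\nabla u)_{|\Gamma}=0$, i.e. $\gamma_0(\partial_j u)=0$ for each $j$; by Theorem \ref{TracesH1demigradH1demiprime} ii) this means $\partial_j u\in H^{1/2}_{00}(\Omega)$, in particular $\partial_j u/\varrho^{1/2}\in L^2(\Omega)$, and by the very definition of $H^{3/2}_{00}(\Omega)$ we conclude $u\in H^{3/2}_{00}(\Omega)$.

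The delicate step, as in the first-order case, is the geometric one: the boundary-gradient decomposition $(\nabla u)_{|\Gamma}=\nabla_\tau(u_{|\Gamma})+(\partial_\textbf{n}u)\,\textbf{n}$ and the identification of $H^1(\Gamma)$ with the space of $L^2(\Gamma)$ functions having $L^2(\Gamma)$ tangential gradient must be handled carefully on a merely Lipschitz boundary — established on $\mathscr{D}(\overline\Omega)$ via local charts, controlled in the appropriate norms, and then transported to $E(\nabla^2;\,\Omega)$ using the density of $\mathscr{D}(\overline\Omega)$ in that space together with the continuity of $\gamma_0$ on $E(\nabla;\,\Omega)$. Everything else is a coordinate-by-coordinate transcription of Theorem \ref{TracesH1demigradH1demiprime}.
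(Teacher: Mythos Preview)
The paper states this corollary without proof, leaving it as an immediate consequence of Theorem~\ref{TracesH1demigradH1demiprime}; your approach---applying that theorem to each $\partial_j u\in E(\nabla;\,\Omega)$ and reading off the $H^1(\Gamma)\times L^2(\Gamma)$ trace via the tangential/normal decomposition of $(\nabla u)_{|\Gamma}$---is exactly the intended derivation and is correct. The points you flag as delicate (density of $\mathscr{D}(\overline\Omega)$ in $E(\nabla^2;\,\Omega)$ and the boundary-gradient splitting on a Lipschitz $\Gamma$) are indeed the only places requiring care, and your outline for handling them is sound.
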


We will now focus on questions related to the normal derivative of $H^1(\Omega)$ functions with sufficiently regular Laplacian.  Recall that if $v \in H^1(\Omega)$ with $\Delta v \in L^2(\Omega)$ (or even $\Delta v \in [H^{1/2}(\Omega)]'$), then the normal derivative  $\partial_\textit{\textbf n} v = \nabla v\cdot\textit{\textbf n}  \in H^{-1/2}(\Gamma)$ and we have the following Green formula:
\begin{equation*}
	\forall\varphi\in H^{1}(\Omega),\quad  \int_{\Omega}(\nabla v\cdot \nabla \varphi + \varphi \Delta v) = \langle \partial_\textit{\textbf n} v, \varphi \rangle_{H^{-1/2}(\Gamma) \times H^{1/2}(\Gamma)}.
\end{equation*}
To prove the existence of this normal derivative and this Green formula, we can use the surjectivity of  the trace operator from $H^1(\Omega)$ into $H^{1/2}(\Gamma)$,  the same Green formula for smooth functions $v$ in $\mathscr{D}(\overline{\Omega})$ and finally the density of this last space into $E(\Delta, \Omega) = \{v \in H^1(\Omega); \;  \Delta v \in L^2(\Omega)\}$. Or more simply, as in \cite{Fabes3},  by defining the normal derivative as the  continuous linear form on $H^{1/2}(\Gamma)$ 
\begin{equation*}
T: g \mapsto \int_{\Omega}(\nabla v\cdot \nabla \varphi + \varphi \Delta v), 
\end{equation*}
 where $\varphi \in H^1(\Omega)$ is any extension of $g$ satisfying $\Vert \varphi \Vert_{H^1(\Omega)} \leq C(\Omega) \Vert g \Vert_{H^{1/2}(\Gamma)}$.

\begin{remark} \upshape \label{remDN} What happens now for the regularity of $ \partial_\textit{\textbf n} v$ if in addition  $v\in H^{3/2}(\Omega)$ and $\Delta v = 0$ in $\Omega$?  Clearly, we know that $ \partial_\textit{\textbf n} v \in H^{-s}(\Gamma)$ for any $0 < s \leq 1/2$. So the precise question is now: can we take $s = 0$ and then to get $ \partial_\textit{\textbf n} v \in L^2(\Gamma)$?  For that  we must show that the above linear form $T$ can be extended (and in this case the extension will be unique since  $H^{1/2}(\Gamma)$ is dense in $L^2(\Gamma)$) to a continuous linear form on $L^2(\Gamma)$ still denoted by $T$:
\begin{equation*}
T:  g \mapsto \langle \nabla v,  \nabla u_g\rangle_{H^{1/2}(\Omega)\times [H^{1/2}(\Omega)]'} , 
\end{equation*}
where $u_g \in H^{1/2}(\Omega)$ is the unique harmonique function satisfying $u_g = g$ on $\Gamma$. Recalling that the linear operator $S : g \mapsto u_g$ is continuous from $L^2(\Gamma)$ into $H^{1/2}(\Omega)$, the answer to the previous question will be positive if and only if the following linear mapping
\begin{equation}\label{nablaS}
\nabla_\circ S : L^2(\Gamma) \longrightarrow [H^{1/2}(\Omega)]' 
\end{equation}
is  continuous. We will answer this question in the next theorem.\medskip
\end{remark}

\begin{theorem} [{\bf Traces of harmonic functions  $H^{1/2}$}]\label{protrace}
Assume that $\Omega$ is of class $\mathscr{C}^{1, 1}$. We have the following properties:\\
i) Let $u \in H^{1/2}(\Omega)$. Then we have the following implication:
\begin{equation}\label{impharC11}
u \quad \mathrm{harmonic}\quad   \Longrightarrow \quad u_{\vert\Gamma} \in L^2(\Gamma) \quad \mathrm{and} \quad \nabla u \in  [\textit{\textbf  H}^{\, 1/2}(\Omega)]'
\end{equation} 
In particular, the trace operator $\gamma :  H^{1/2}(\Omega)\cap \mathscr{H} \rightarrow L^2(\Gamma)$ is an isomorphism, where $\mathscr{H}$ is the space of harmonic functions in $\Omega$. Moreover, the range $\gamma(E(\nabla, \Omega))$ is equal to $ L^2(\Gamma)$. \\
ii) We have the algebraical identity $H^{1/2}(\Omega)\cap \mathscr{H}  = E(\nabla, (\Omega))\cap \mathscr{H}$ and topological:
$$
\Vert \nabla v \Vert_{[\textit{\textbf H}^{1/2}(\Omega)]'} \approx \Vert \nabla v \Vert_{[\textit{\textbf H}^{1/2}_{00}(\Omega)]'} \quad \mathrm{for}\; v \in H^{1/2}(\Omega)\cap \mathscr{H}.
$$
iii) The operator \eqref{nablaS} is  continuous.
\end{theorem}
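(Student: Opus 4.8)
The three statements are tightly linked, so the plan is to establish (iii) first, then derive (i) and (ii) as consequences. For (iii), recall from Remark \ref{remDN} that $\nabla_\circ S = \nabla \circ S$ where $S : L^2(\Gamma) \to H^{1/2}(\Omega)$ sends a boundary datum $g$ to the unique harmonic function $u_g$ with trace $g$; its continuity is exactly the content of the equivalence of norms $\Vert u \Vert_{H^{1/2}(\Omega)} \approx \Vert u \Vert_{L^2(\Gamma)}$ on harmonic functions, which for a $\mathscr{C}^{1,1}$ domain is classical (e.g. via the single layer potential or the results of \cite{J-K}, \cite{Fabes3}). The point to prove is that $g \mapsto \nabla u_g$ lands continuously in $[\textit{\textbf H}^{1/2}(\Omega)]'$ rather than merely in $[\textit{\textbf H}^{1/2}_{00}(\Omega)]'$. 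The plan is to pair $\nabla u_g$ against a test field $\bm\varphi \in \textit{\textbf H}^{1/2}(\Omega)$, integrate by parts, and control the resulting boundary term. Concretely, writing (for smooth data) $\langle \nabla u_g, \bm\varphi\rangle = -\int_\Omega u_g \,\mathrm{div}\,\bm\varphi + \int_\Gamma u_g \,\bm\varphi\cdot\textit{\textbf n}$, the first term is bounded by $\Vert u_g\Vert_{L^2(\Omega)}\Vert\mathrm{div}\,\bm\varphi\Vert_{L^2(\Omega)} \le C\Vert u_g\Vert_{H^{1/2}(\Omega)}\Vert\bm\varphi\Vert_{\textit{\textbf H}^{1/2}(\Omega)}$ after interpolating $\mathrm{div}: \textit{\textbf H}^{1/2}(\Omega)\to H^{-1/2}(\Omega)$ against $L^2(\Omega) = [H^{1/2}(\Omega)]'\cap\ldots$; and the boundary term is bounded by $\Vert g\Vert_{L^2(\Gamma)}\Vert\bm\varphi\cdot\textit{\textbf n}\Vert_{L^2(\Gamma)}$, where on a $\mathscr{C}^{1,1}$ domain $\textit{\textbf n}$ is Lipschitz so $\bm\varphi\cdot\textit{\textbf n}\in H^{1/2}(\Omega)$ has a trace in $L^2(\Gamma)$ precisely because it belongs to $E(\nabla;\Omega)$ — this is where Theorem \ref{TracesH1demigradH1demiprime}~i) is invoked. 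Combining, $\Vert\nabla u_g\Vert_{[\textit{\textbf H}^{1/2}(\Omega)]'} \le C\Vert g\Vert_{L^2(\Gamma)}$, giving (iii).

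\textbf{Deriving (i) and (ii).} Given (iii), let $u\in H^{1/2}(\Omega)\cap\mathscr{H}$. Write $u = Sg$ for some $g$; but a priori we only know $u\in H^{1/2}(\Omega)$, so the first task is to produce the trace $g$. Here I would use the $\mathscr{C}^{1,1}$ regularity directly: the harmonic extension operator $S$ is, for such domains, an isomorphism from $L^2(\Gamma)$ onto $H^{1/2}(\Omega)\cap\mathscr{H}$ (surjectivity is the harmonic Dirichlet solvability with $L^2$ data; injectivity plus the reverse estimate is Theorem \ref{unicityH1demi} together with $\Vert Sg\Vert_{H^{1/2}(\Omega)}\ge c\Vert g\Vert_{L^2(\Gamma)}$). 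Hence every $u\in H^{1/2}(\Omega)\cap\mathscr{H}$ is $Sg$ with $g = u_{\vert\Gamma}\in L^2(\Gamma)$, and by (iii) $\nabla u = \nabla_\circ S\, g \in [\textit{\textbf H}^{1/2}(\Omega)]'$. This is exactly the implication \eqref{impharC11}, and it shows $H^{1/2}(\Omega)\cap\mathscr{H}\subseteq E(\nabla;\Omega)\cap\mathscr{H}$; the reverse inclusion is trivial since $E(\nabla;\Omega)\hookrightarrow H^{1/2}(\Omega)$. The trace map $\gamma : H^{1/2}(\Omega)\cap\mathscr{H}\to L^2(\Gamma)$ then coincides with $S^{-1}$, hence is an isomorphism, and since $\gamma(E(\nabla;\Omega)) \supseteq \gamma(H^{1/2}(\Omega)\cap\mathscr{H}) = L^2(\Gamma)$ while $\gamma(E(\nabla;\Omega))\subseteq L^2(\Gamma)$ by Theorem \ref{TracesH1demigradH1demiprime}, equality holds, settling the last claim of (i) and answering Remark \ref{Surj}. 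For (ii), the algebraic identity is what we just proved; the norm equivalence $\Vert\nabla v\Vert_{[\textit{\textbf H}^{1/2}(\Omega)]'}\approx\Vert\nabla v\Vert_{[\textit{\textbf H}^{1/2}_{00}(\Omega)]'}$ on harmonic $v$ follows from the chain $\Vert\nabla v\Vert_{[\textit{\textbf H}^{1/2}_{00}(\Omega)]'}\le\Vert\nabla v\Vert_{[\textit{\textbf H}^{1/2}(\Omega)]'}\le C\Vert v_{\vert\Gamma}\Vert_{L^2(\Gamma)}\le C'\Vert v\Vert_{H^{1/2}(\Omega)}$ (using (iii) and the isomorphism) combined with the lower bound $\Vert v\Vert_{H^{1/2}(\Omega)}\le C(\Vert v\Vert_{L^2(\Omega)} + \Vert\sqrt\varrho\,\nabla v\Vert_{L^2(\Omega)})$ and a Hardy-type argument relating $\Vert\sqrt\varrho\,\nabla v\Vert_{L^2(\Omega)}$ to $\Vert\nabla v\Vert_{[\textit{\textbf H}^{1/2}_{00}(\Omega)]'}$, or more directly by observing both sides are equivalent to $\Vert v\Vert_{H^{1/2}(\Omega)}$ modulo the constant function.

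\textbf{Main obstacle.} The delicate point is the continuity of $\nabla_\circ S$ into the \emph{smaller} dual $[\textit{\textbf H}^{1/2}(\Omega)]'$, i.e. controlling the boundary term $\int_\Gamma g\,\bm\varphi\cdot\textit{\textbf n}$ when $\bm\varphi$ ranges only over $\textit{\textbf H}^{1/2}(\Omega)$ and not over $\textit{\textbf H}^{1/2}_{00}(\Omega)$. This is exactly where the $\mathscr{C}^{1,1}$ hypothesis is essential — it guarantees $\textit{\textbf n}$ is Lipschitz, so $\bm\varphi\cdot\textit{\textbf n}\in H^{1/2}(\Omega)$ with $\nabla(\bm\varphi\cdot\textit{\textbf n})\in[\textit{\textbf H}^{1/2}(\Omega)]'$, hence $\bm\varphi\cdot\textit{\textbf n}\in E(\nabla;\Omega)$ and has an $L^2(\Gamma)$ trace bounded by $\Vert\bm\varphi\Vert_{\textit{\textbf H}^{1/2}(\Omega)}$ via Theorem \ref{TracesH1demigradH1demiprime}~i). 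On a merely Lipschitz domain this fails, consistent with the counterexample of Proposition \ref{ConterexampleH1demitrace1b}. One must also take care, in the Green-formula computation, that $u_g$ is only in $H^{1/2}(\Omega)$, so the identity $\langle\nabla u_g,\bm\varphi\rangle = -\int_\Omega u_g\,\mathrm{div}\,\bm\varphi + \langle u_g,\bm\varphi\cdot\textit{\textbf n}\rangle_\Gamma$ should first be established for $g$ smooth (so $u_g$ smooth up to the boundary by $\mathscr{C}^{1,1}$ elliptic regularity) and then extended by density of $\mathscr{D}(\Gamma)$ in $L^2(\Gamma)$, using the already-proven continuity of $S : L^2(\Gamma)\to H^{1/2}(\Omega)$ to pass to the limit on the left and in the volume term, and the continuity of $\gamma_0$ on $E(\nabla;\Omega)$ to pass to the limit in the boundary term.
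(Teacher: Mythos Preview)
Your approach to the trace part of (i) (that $S:L^2(\Gamma)\to H^{1/2}(\Omega)\cap\mathscr{H}$ is an isomorphism, hence every harmonic $u\in H^{1/2}(\Omega)$ has an $L^2$ trace) is correct and agrees in spirit with the paper's interpolation argument. The derivation of (ii) and the final statements of (i) from (iii) is also fine.

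However, your proof of the key estimate $\Vert\nabla u_g\Vert_{[\textit{\textbf H}^{1/2}(\Omega)]'}\le C\Vert g\Vert_{L^2(\Gamma)}$ has a genuine gap. You write, for a test field $\varphi\in\textit{\textbf H}^{1/2}(\Omega)$,
\[
\langle\nabla u_g,\varphi\rangle \;=\; -\int_\Omega u_g\,\mathrm{div}\,\varphi \;+\; \int_\Gamma g\,(\varphi\cdot\textit{\textbf n}),
\]
and try to bound each term by $C\Vert g\Vert_{L^2(\Gamma)}\Vert\varphi\Vert_{\textit{\textbf H}^{1/2}(\Omega)}$. Neither bound goes through. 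For the volume term, the divergence maps $\textit{\textbf H}^{1/2}(\Omega)$ continuously only into $[H^{1/2}_{00}(\Omega)]'$, so controlling $\int_\Omega u_g\,\mathrm{div}\,\varphi$ would require $u_g\in H^{1/2}_{00}(\Omega)$, which is false as soon as $g\neq 0$. For the boundary term, you claim $\varphi\cdot\textit{\textbf n}\in E(\nabla;\Omega)$ because $\textit{\textbf n}$ (extended) is Lipschitz; but $\nabla(\varphi\cdot\textit{\textbf n})$ contains $(\nabla\varphi)\textit{\textbf n}$, and for a generic $\varphi\in\textit{\textbf H}^{1/2}(\Omega)$ one only has $\nabla\varphi\in[\textit{\textbf H}^{1/2}_{00}(\Omega)]'$, not $[\textit{\textbf H}^{1/2}(\Omega)]'$. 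So $\varphi\cdot\textit{\textbf n}\notin E(\nabla;\Omega)$ in general, Theorem~\ref{TracesH1demigradH1demiprime} does not apply, and the argument becomes circular. The deeper reason is that a bare integration by parts never uses the harmonicity of $u_g$, which is essential here.

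The paper's device is to insert an auxiliary Neumann problem that brings in the harmonicity via a \emph{second} Green formula. For $\textit{\textbf F}\in\mathscr{D}(\Omega)^N$ one solves $\Delta\chi=\mathrm{div}\,\textit{\textbf F}$ in $\Omega$, $\partial_{\textit{\textbf n}}\chi=0$ on $\Gamma$; on a $\mathscr{C}^{1,1}$ domain this gives $\chi\in H^2(\Omega)$ with $\Vert\chi\Vert_{H^{3/2}(\Omega)}\le C\Vert\textit{\textbf F}\Vert_{\textit{\textbf H}^{1/2}(\Omega)}$. Then
\[
\langle\nabla u,\textit{\textbf F}\rangle \;=\; -\int_\Omega u\,\Delta\chi \;=\; -\langle\partial_{\textit{\textbf n}}u,\chi\rangle_{H^{-1}(\Gamma)\times H^1(\Gamma)},
\]
where the second equality uses $\Delta u=0$ and $\partial_{\textit{\textbf n}}\chi=0$. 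Now $\Vert\partial_{\textit{\textbf n}}u\Vert_{H^{-1}(\Gamma)}\le C\Vert u\Vert_{L^2(\Gamma)}$ via the Dirichlet-to-Neumann map, and---this is precisely where $\mathscr{C}^{1,1}$ is used---$\Vert\chi\Vert_{H^1(\Gamma)}\le C\Vert\chi\Vert_{H^{3/2}(\Omega)}\le C\Vert\textit{\textbf F}\Vert_{\textit{\textbf H}^{1/2}(\Omega)}$. Density of $\mathscr{D}(\Omega)^N$ in $\textit{\textbf H}^{1/2}(\Omega)$ finishes the estimate. If you wish to repair your argument rather than replace it, this is the missing ingredient: trade $\mathrm{div}\,\textit{\textbf F}$ for $\Delta\chi$ so that the harmonicity of $u$ can be exploited.
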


\begin{proof} {\bf i) Step 1.} We know that $S \in \mathscr{L}(H^{1/2}(\Gamma); H^1(\Omega)\cap \mathscr{H})$. Since $\Omega$ is of class $\mathscr{C}^{1, 1}$ we have also $S \in \mathscr{L}(H^{-1/2}(\Gamma); L^2(\Omega)\cap \mathscr{H})$ (see Theorem 7 in  \cite{ARB}). As
$$
[L^2(\Omega)\cap \mathscr{H}, H^1(\Omega)\cap \mathscr{H}]_{1/2} = H^{1/2}(\Omega)\cap \mathscr{H},
$$
(see \cite{J-K} page 183) we deduce  by interpolation that $S \in \mathscr{L}(L^{2}(\Gamma); H^{1/2}(\Omega)\cap \mathscr{H})$. Note that the first and  third operators above, unlike the second, are continuous even if $\Omega$ is only Lipschitz.  On the other hand, the trace operator $\gamma$ satisfies:
$$
\forall v\in H^1(\Omega)\cap \mathscr{H}, \quad \Vert \gamma v \Vert_{H^{1/2}(\Gamma)} \leq C(\Omega) \Vert v \Vert_{H^1(\Omega)}\quad 
$$
and
$$
\forall v\in L^2(\Omega)\cap \mathscr{H}, \quad \Vert \gamma v \Vert_{H^{-1/2}(\Gamma)} \leq C(\Omega) \Vert v \Vert_{L^2(\Omega)},
$$
(see Lemma 2 in  \cite{ARB} for this last inequality which holds since $  \Omega$ is $ \mathscr{C}^{1, 1}$).
By interpolation again, we deduce that
\begin{equation*}
\forall v\in H^{1/2}(\Omega)\cap \mathscr{H}, \quad \Vert \gamma v \Vert_{L^{2}(\Gamma)} \leq C(\Omega) \Vert v \Vert_{H^{1/2}(\Omega)}.
\end{equation*} 
That means that the trace operator $\gamma :  H^{1/2}(\Omega)\cap \mathscr{H} \rightarrow L^2(\Gamma)$ is continuous and is bijective.   Moreover $S = \gamma^{-1}$ and 
$$
 \Vert v \Vert_{H^{1/2}(\Omega)} \approx \Vert \gamma v \ \Vert_{L^{2}(\Gamma)} \quad \mathrm{for}\quad  v\in H^{1/2}(\Omega)\cap \mathscr{H}.
 $$
 This proves the first part of implication \eqref{impharC11}. \medskip
  
{\bf Step 2.}  Let us now prove the second part of implication \eqref{impharC11}. Let  $u \in H^{1/2}(\Omega)$ a harmonic function. Since $\Omega$ is of class $\mathscr{C}^{1, 1}$,  for any $\textit{\textbf F} \in \mathscr{D}(\Omega)^N$ there exists a unique 
 $\chi \in H^2(\Omega)\cap L^2_0(\Omega)$ such that 
\begin{equation}\label{estimNeu}
\Delta \chi = \mathrm{div}\, \textit{\textbf F} \quad \mathrm{in}\; \Omega \quad \mathrm{and}\quad  \partial_\textit{\textbf n}\chi = 0\quad \mathrm{on}\; \Gamma, \quad \mathrm{with}\quad \Vert \chi \Vert_{H^{3/2}(\Omega)} \leq C \Vert \textit{\textbf F}\,  \Vert_{\textit{\textbf  H}^{\, 1/2}(\Omega)}. 
\end{equation}
Using the harmonicity of $u$, we have
$$
\langle \nabla u, \textit{\textbf F}\, \rangle_{[\mathscr{D}'(\Omega)]^N \times \mathscr{D}(\Omega)^N} = - \int_\Omega u\,  \mathrm{div}\, \textit{\textbf F} = - \int_\Omega u\,  \Delta \chi.
$$
Since the regularity of the domain $\Omega$, we know that for any $ v\in L^2(\Omega)$ with $ \Delta v \in L^2(\Omega)$ and $\varphi \in H^2(\Omega)$, 
\begin{equation*}
 \int_\Omega v\Delta \varphi -  \int_\Omega \varphi \Delta v = \langle v, \partial_\textit{\textbf n} \varphi\rangle_{H^{-1/2}(\Gamma)\times H^{1/2}(\Gamma)} -  \langle \partial_\textit{\textbf n} v , \varphi \rangle_{H^{-3/2}(\Gamma)\times H^{3/2}(\Gamma)}.
\end{equation*}
Recall also that the Steklov operator (also called the Dirichlet-to-Neumann operator) denoted by $S_P$ satisfies the estimate (which holds even if $\Omega$ is only Lipschitz):
$$
\Vert S_P u \Vert_{H^{-1}(\Gamma)} \leq C(\Omega) \Vert  u \Vert_{L^{2}(\Gamma)}.
$$

So we deduce that
$$
\langle \nabla u, \textit{\textbf F}\, \rangle_{[\mathscr{D}'(\Omega)]^N \times \mathscr{D}(\Omega)^N}  = - \langle \partial_\textit{\textbf n} u, \chi\rangle_{H^{-1}(\Gamma)\times H^1(\Gamma)}
 $$
and then
$$
\vert \langle \nabla u, \textit{\textbf F}\, \rangle_{[\mathscr{D}'(\Omega)]^N \times \mathscr{D}(\Omega)^N} \vert \leq C(\Omega) \Vert u \Vert_{L^{2}(\Gamma)} \Vert \chi \Vert_{H^{1}(\Gamma)}.  
$$
As $\Omega$ is $\mathscr{C}^{1, 1}$, we know that
$$
\Vert \chi \Vert_{H^{1}(\Gamma)} \leq C(\Omega)\Vert \chi \Vert_{H^{3/2}(\Omega)}.
$$
Note that this last inequality does not occur when $\Omega$ is only Lipchitz. From the estimate in \eqref{estimNeu}, we finally deduce that
$$
\vert \langle \nabla u, \textit{\textbf F}\, \rangle_{[\mathscr{D}'(\Omega)]^N \times \mathscr{D}(\Omega)^N} \vert \leq C(\Omega)\Vert u \Vert_{L^2(\Gamma)} \Vert \textit{\textbf F}\,  \Vert_{\textit{\textbf  H}^{\, 1/2}(\Omega)}
$$
and then the estimate
$$
\Vert \nabla u \Vert_{[\textit{\textbf  H}^{\, 1/2}(\Omega)]'} \leq C(\Omega)\Vert u \Vert_{L^2(\Gamma)} \leq C(\Omega)\Vert u \Vert_{H^{1/2}(\Omega)} 
$$
by using the density of $\mathscr{D}(\Omega)^N$ in $\textit{\textbf  H}^{\, 1/2}(\Omega)$. This proves the second part of implication \eqref{impharC11}. \medskip

The isomorphism $\gamma :  H^{1/2}(\Omega)\cap \mathscr{H} \rightarrow L^2(\Gamma)$ is a simple consequence of the fact that for every $g\in L^2(\Gamma)$, there exists a unique harmonic function $w$ in $\Omega$ satisfying $w = g$ on $\Gamma$  (see Theorem 5.3 in \cite{J-K} or Theorem 8.4 in \cite{AM}). And we clearly have the identity $\gamma(E(\nabla, \Omega)) = L^2(\Gamma)$.\medskip

{\bf ii)} It is an immediate consequence of \eqref{impharC11}.\medskip

{\bf iii)}  The last property concerning the operator \eqref{nablaS} is an immediate consequence of Point ii) and the continuity of $S$.
\end{proof}

 \medskip

Recall the following Ne$\mathrm{\check{c}}$as property (see \cite{Necas}, Chapter 5): if $u\in H^1(\Omega)$ with $\Delta u  \in\ L^2(\Omega)$,  then we have the following equivalences 
\begin{equation}\label{NecProb}
u\in H^1(\Gamma) \Longleftrightarrow \frac{\partial u}{\partial \textit{\textbf n}}\in L^2(\Gamma) \Longleftrightarrow  \nabla u \in L^2(\Gamma).
\end{equation}
Moreover, in this case, we have the following estimates
\begin{equation*}
\Vert\frac{\partial u}{\partial \textit{\textbf n}}\Vert_{ \textit{\textbf L}^2(\Gamma)}\ \leq\ C(\Omega)\Big(\inf_{k\in  \mathbb{R}}\left|\left |u + k \right |\right |_{H^1(\Gamma)} + \Vert \Delta u\Vert_{L^2(\Omega)}\Big)
\end{equation*}
and
\begin{equation*}
 \inf_{k\in  \mathbb{R}}\Vert u + k\Vert_{H^1(\Gamma)}  \ \leq\ C(\Omega)\Big( \Vert\frac{\partial u}{\partial \textit{\textbf n}}\Vert_{ \textit{\textbf L}^2(\Gamma)} +  \Vert \Delta u\Vert_{L^2(\Omega)}\Big).
\end{equation*}
Observe that the above equivalences \eqref{NecProb} are valid if we replace the condition  $\Delta u  \in\ L^2(\Omega)$ by the weaker condition  $\Delta u  \in [H^{1/2}(\Omega)]'$ (see Corollary 8.9 in \cite{AM}).

\begin{theorem}  [{\bf Traces of harmonic functions  $H^{3/2}$}]\label{protrace32}
Assume that $\Omega$ is of class $\mathscr{C}^{1, 1}$. We have the following properties: let $u \in H^{3/2}(\Omega)$, then we have the following implication:
\begin{equation}\label{impharC1132}
u \quad \mathrm{harmonic}\quad   \Longrightarrow \quad \partial_\textit{\textbf n} u \in L^2(\Gamma) \quad \mathrm{and} \quad \nabla^2 u \in  [\textit{\textbf  H}^{\, 1/2}(\Omega)]'
\end{equation} 
In particular, the trace operator $\gamma_{\textit{\textbf n}} :  v \rightarrow \partial_\textit{\textbf n} v$ is an isomorphism from $ H^{3/2}(\Omega)\cap L^2_0(\Omega) \cap \mathscr{H} $ onto $ L^2_0(\Gamma)$. 
\end{theorem}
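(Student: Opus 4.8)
The plan is to reduce the statement to Theorem~\ref{protrace}, applied to the first order partial derivatives of $u$, combined with Corollary~\ref{TracesbH1demigradH1demiprime}, and then to obtain the isomorphism from the $L^2$-solvability of the Neumann problem on a $\mathscr{C}^{1,1}$ domain.

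First I would observe that if $u\in H^{3/2}(\Omega)$ is harmonic, then for every $j=1,\ldots,N$ the derivative $\partial_j u$ belongs to $H^{1/2}(\Omega)$ and is again harmonic, since $\Delta(\partial_j u)=\partial_j(\Delta u)=0$ in $\mathscr{D}'(\Omega)$. Applying Theorem~\ref{protrace}, Point i), to each $\partial_j u$ (this is where the hypothesis $\Omega\in\mathscr{C}^{1,1}$ enters) gives $\gamma_0(\partial_j u)\in L^2(\Gamma)$ and $\nabla(\partial_j u)\in[\textit{\textbf H}^{\,1/2}(\Omega)]'$, together with the estimate $\Vert\nabla(\partial_j u)\Vert_{[\textit{\textbf H}^{\,1/2}(\Omega)]'}\leq C\Vert\partial_j u\Vert_{L^2(\Gamma)}\leq C\Vert\partial_j u\Vert_{H^{1/2}(\Omega)}\leq C\Vert u\Vert_{H^{3/2}(\Omega)}$. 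Assembling these over $j$ yields $\nabla^2 u\in[\textit{\textbf H}^{\,1/2}(\Omega)]'$, which is the second half of \eqref{impharC1132}, and shows that $u\in E(\nabla^2;\,\Omega)$. Corollary~\ref{TracesbH1demigradH1demiprime}, Point i), then applies and furnishes $\partial_{\textit{\textbf n}} u\in L^2(\Gamma)$ with $\Vert\partial_{\textit{\textbf n}} u\Vert_{L^2(\Gamma)}\leq C\Vert u\Vert_{E(\nabla^2;\,\Omega)}\leq C\Vert u\Vert_{H^{3/2}(\Omega)}$; this completes \eqref{impharC1132} and proves that $\gamma_{\textit{\textbf n}}$ is continuous on $H^{3/2}(\Omega)\cap\mathscr{H}$. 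Alternatively, one identifies $\partial_{\textit{\textbf n}} u$ with $\sum_j n_j\,\gamma_0(\partial_j u)$, which lies in $L^2(\Gamma)$ because $\textit{\textbf n}\in\textit{\textbf L}^\infty(\Gamma)$ on a $\mathscr{C}^{1,1}$ domain, and invokes the Ne\v{c}as equivalences \eqref{NecProb} with $\Delta u=0$.

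Next I would treat the isomorphism $\gamma_{\textit{\textbf n}}:H^{3/2}(\Omega)\cap L^2_0(\Omega)\cap\mathscr{H}\to L^2_0(\Gamma)$. The range lands in $L^2_0(\Gamma)$ by the Green formula $\int_\Gamma\partial_{\textit{\textbf n}} u\,d\sigma=\int_\Omega\Delta u=0$. For injectivity, $\partial_{\textit{\textbf n}} u=0$ on $\Gamma$ forces $u$ to be a harmonic function with vanishing Neumann datum, hence constant, and the normalization $u\in L^2_0(\Omega)$ gives $u=0$. For surjectivity, given $g\in L^2_0(\Gamma)$ I would invoke the solvability of the Neumann problem $\Delta u=0$ in $\Omega$, $\partial_{\textit{\textbf n}} u=g$ on $\Gamma$, $\int_\Omega u=0$, on a $\mathscr{C}^{1,1}$ domain: its solution satisfies $\nabla u|_\Gamma\in\textit{\textbf L}^2(\Gamma)$, hence $u\in H^{3/2}(\Omega)$ by $L^2$-Neumann regularity, so $u\in H^{3/2}(\Omega)\cap L^2_0(\Omega)\cap\mathscr{H}$ with $\gamma_{\textit{\textbf n}} u=g$. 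Being a continuous bijection between Banach spaces, $\gamma_{\textit{\textbf n}}$ is then an isomorphism by the open mapping theorem.

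All the steps except the last rely on results already established in the paper; the main obstacle is the surjectivity in the third step, which rests on the $H^{3/2}$-regularity of the Neumann solution with $L^2(\Gamma)$ datum on a $\mathscr{C}^{1,1}$ domain --- the Neumann counterpart of the continuity of the operator $S$ used in the proof of Theorem~\ref{protrace}, for which one should cite the appropriate regularity result (e.g. \cite{J-K} or a Grisvard-type estimate). With that ingredient in hand the argument closes without further difficulty.
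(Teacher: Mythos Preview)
Your argument is correct and close in spirit to the paper's, but it is organized more economically. The paper proves $\partial_{\textit{\textbf n}} u\in L^2(\Gamma)$ first, by noting that on a $\mathscr{C}^{1,1}$ domain $u\in H^{3/2}(\Omega)$ already gives $u_{\vert\Gamma}\in H^1(\Gamma)$ and then invoking the Ne\v{c}as equivalences \eqref{NecProb}; for $\nabla^2 u\in[\textit{\textbf H}^{\,1/2}(\Omega)]'$ it repeats the auxiliary Neumann-problem duality argument from Step~2 of Theorem~\ref{protrace} (solve $\Delta\chi_j=\partial_j f$, $\partial_{\textit{\textbf n}}\chi_j=0$, and pair), observing along the way that each $\partial_i u$ is harmonic in $H^{1/2}(\Omega)$. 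You instead apply Theorem~\ref{protrace} directly to each $\partial_j u$, which immediately yields both $\gamma_0(\partial_j u)\in L^2(\Gamma)$ and $\nabla(\partial_j u)\in[\textit{\textbf H}^{\,1/2}(\Omega)]'$, and then read off $\partial_{\textit{\textbf n}} u\in L^2(\Gamma)$ either via Corollary~\ref{TracesbH1demigradH1demiprime} or as $\sum_j n_j\gamma_0(\partial_j u)$. This avoids rewriting the duality computation and is a legitimate shortcut; the paper's route is self-contained but redundant with its own Theorem~\ref{protrace}. For the isomorphism both proofs are the same: the paper cites \cite{Jer1} for the $L^2$-Neumann solvability giving surjectivity, which is exactly the external ingredient you flag.
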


\begin{proof} We start by observe that $u \in H^1(\Gamma) $ since $\Omega$ is of class $\mathscr{C}^{1, 1}$. So the first part of implication \eqref{impharC1132} is an immediate consequence of \eqref{NecProb}. Another way to obtain this result is to proceed as in the proof of Theorem \ref{protrace}.\\

 Let us now prove the second part of implication \eqref{impharC1132}. Let  $u \in H^{3/2}(\Omega)$ a harmonic function. Since $\Omega$ is of class $\mathscr{C}^{1, 1}$,  for any $f \in \mathscr{D}(\Omega)$ and any $j = 1, \ldots, N$, there exists a unique 
 $\chi_j \in H^2(\Omega)\cap L^2_0(\Omega)$ such that 
\begin{equation}\label{estimNeuj}\
\Delta \chi_j = \frac{\partial f}{\partial x_{j}} \quad \mathrm{in}\; \Omega \quad \mathrm{and}\quad  \partial_\textit{\textbf n}\chi_j = 0\quad \mathrm{on}\; \Gamma, \quad \mathrm{with}\quad \Vert \chi_j \Vert_{H^{3/2}(\Omega)} \leq C_j \Vert f  \Vert_{H^{1/2}(\Omega)}. 
\end{equation}
Using the harmonicity of $u$, we have for any $i = 1, \ldots, N$, 
$$
\langle \frac{\partial^2  u}{\partial x_{i}\partial x_{j}}, f\, \rangle_{\mathscr{D}'(\Omega) \times \mathscr{D}(\Omega)} = - \int_\Omega \frac{\partial u}{\partial x_{i}} \, \frac{\partial f}{\partial x_{j}} = - \int_\Omega \frac{\partial u}{\partial x_{i}}  \Delta \chi_j .
$$
Since the regularity of the domain $\Omega$, we know that for any $ v\in L^2(\Omega)$ with $ \Delta v \in L^2(\Omega)$ and $\varphi \in H^2(\Omega)$, 
\begin{equation*}
 \int_\Omega v\Delta \varphi -  \int_\Omega \varphi \Delta v = \langle v, \partial_\textit{\textbf n} \varphi\rangle_{H^{-1/2}(\Gamma)\times H^{1/2}(\Gamma)} -  \langle \partial_\textit{\textbf n} v , \varphi \rangle_{H^{-3/2}(\Gamma)\times H^{3/2}(\Gamma)}.
\end{equation*}
So we deduce from this Green formula that
$$
\langle \frac{\partial^2  u}{\partial x_{i}\partial x_{j}}, f \rangle_{\mathscr{D}'(\Omega) \times \mathscr{D}(\Omega)}   = - \langle \partial_\textit{\textbf n} (\frac{\partial u}{\partial x_{i}}), \chi_j\rangle_{H^{-1}(\Gamma)\times H^1(\Gamma)}.
 $$
Note that $\frac{\partial u}{\partial x_{i}} \in H^{1/2}(\Omega)$ and is harmonic. Hence $\frac{\partial u}{\partial x_{i}}_{\vert\Gamma} \in L^2(\Gamma)$ and $\partial_\textit{\textbf n} (\frac{\partial u}{\partial x_{i}}) \in H^{-1}(\Gamma)$. As in the proof of Theorem \ref{protrace}, we deduce that
$$
\vert \langle \frac{\partial^2  u}{\partial x_{i}\partial x_{j}}, f \rangle_{\mathscr{D}'(\Omega) \times \mathscr{D}(\Omega)} \vert \leq C_{ij}(\Omega) \Vert \frac{\partial u}{\partial x_{i}} \Vert_{L^{2}(\Gamma)} \Vert \chi_j \Vert_{H^{1}(\Gamma)} \leq C_{ij}(\Omega) \Vert \frac{\partial u}{\partial x_{i}} \Vert_{L^{2}(\Gamma)} \Vert \chi_j \Vert_{H^{3/2}(\Omega)}.  
$$
From the estimate in \eqref{estimNeuj}, we finally deduce that
$$
\vert \langle \frac{\partial^2  u}{\partial x_{i}\partial x_{j}}, f \rangle_{\mathscr{D}'(\Omega) \times \mathscr{D}(\Omega)} \vert\leq C_{ij}(\Omega) \Vert u \Vert_{H^{3/2}(\Omega)} \Vert f  \Vert_{H^{1/2}(\Omega)}.
$$
and then the estimate
$$
\Vert \frac{\partial^2  u}{\partial x_{i}\partial x_{j}}\Vert_{[H^{\, 1/2}(\Omega)]'} \leq C_{ij}(\Omega) \Vert u \Vert_{H^{3/2}(\Omega)} 
$$
by using the density of $\mathscr{D}(\Omega)$ in $H^{\, 1/2}(\Omega)$. This proves the second part of implication \eqref{impharC1132}. \medskip

The isomorphism $\gamma_{\textit{\textbf n}}  :  H^{3/2}(\Omega)\cap L^2_0(\Omega) \cap \mathscr{H} \rightarrow L^2_0(\Gamma)$ is a simple consequence of the fact that for every $h\in L^2_0(\Gamma)$, there exists a unique harmonic function $w\in H^{3/2}(\Omega)\cap L^2_0(\Omega) $ in $\Omega$ satisfying $ \partial_\textit{\textbf n}  w = h$ on $\Gamma$  (see \cite{Jer1}). And we have the following equivalence:
$$
 \Vert v \Vert_{H^{3/2}(\Omega)} \approx \Vert \gamma_{\textit{\textbf n}} v \ \Vert_{L^{2}(\Gamma)} \quad \mathrm{for}\quad  v\in H^{3/2}(\Omega)\cap L^2_0(\Omega) \cap \mathscr{H}.
 $$
 And we clearly have the identity  $\gamma_\textit{\textbf n}(E(\nabla^2, \Omega)) = L^2_0(\Gamma)$.
\end{proof}

\bigskip

$^\star$ This paper is dedicated to the memory of my dear friend Mohand Moussaoui, who passed away before the work was completed.
%

\end{document}